\DeclareMathAlphabet{\mathcalalt}{OMS}{cmsy}{m}{n}
\newcommand{\poalgfont}{\mathcalalt}
\newcommand{\bbfont}{\mathbbm}
\newcommand{\lebfont}{\mathcal}
\newcommand{\opstrufont}{\mathcalalt}
\numberwithin{equation}{section}
\newcommand{\upc}{{\mathrm{c}}}
\newcommand{\upd}{{\mathrm{d}}}
\newcommand{\upC}{{\mathrm{C}}}
\newcommand{\NN}{{\bbfont N}}
\newcommand{\RR}{{\bbfont R}}
\newcommand{\ulp}{{\textup{(}}}
\newcommand{\urp}{{\textup{)}}}
\newcommand{\uppars}[1]{\ulp #1\urp}
\newcommand{\abs}[1]{{\lvert #1 \rvert}}
\newcommand{\norm}[1]{{\lVert #1 \rVert}}
\newcommand{\lrabs}[1]{{\left\lvert #1 \right\rvert}}
\newcommand{\mc}{mono\-tone com\-plete}
\newcommand{\smc}{$\sigma$-mono\-tone com\-plete}
\newcommand{\Dc}{De\-de\-kind com\-plete}
\newcommand{\sDc}{$\sigma$-De\-de\-kind com\-plete}
\newcommand{\SOTlim}{\mathrm{SOT-}\lim}
\newcommand{\comp}{{\upc}}
\newcommand{\indicator}[1]{\chi_{#1}}
\newcommand{\pos}[1]{{#1^+}}
\newcommand{\negt}[1]{{#1^-}}
\newcommand{\f}[1]{(#1)}
\newcommand{\inp}[1]{\langle #1 \rangle}
\newcommand{\lrinp}[1]{\left\langle #1\right\rangle}
\newcommand{\seq}[1]{\{{#1}_n\}_{n=1}^{\infty}}
\newcommand{\net}[1]{\{{#1}_\lambda\}_{\lambda\in \Lambda}}
\newcommand{\Ell}{{\mathrm{L}}}
\newcommand{\largest}{\infty}
\newcommand{\sa}{{\mathrm{sa}}}
\newcommand{\pset}{X}
\newcommand{\pt}{x}
\newcommand{\ts}{X}
\newcommand{\os}{E}
\newcommand{\ostwo}{F}
\newcommand{\posos}{\pos{\os}}
\newcommand{\osext}{\overline{\os}}
\newcommand{\pososext}{\overline{\posos}}
\newcommand{\posR}{\pos{\RR}}
\newcommand{\Rext}{\overline{\RR}}
\newcommand{\posRext}{\overline{\pos{\RR}}}
\newcommand{\hilbert}{H}
\newcommand{\jbw}{\poalgfont{M}}
\newcommand{\posmap}{\pi}
\newcommand{\posmapT}{T}
\newcommand{\odual}[1]{{#1^{\thicksim}}}
\newcommand{\ndual}[1]{{#1^{\ast}}}
\newcommand{\ocdual}[1]{{#1_{\mathrm {oc}}^{\thicksim}}}
\newcommand{\socdual}[1]{{#1_{\sigma\mathrm {oc}}^{\thicksim}}}
\newcommand{\odualos}{\odual{\os}}
\newcommand{\ndualos}{\ndual{\os}}
\newcommand{\ocdualos}{\ocdual{\os}}
\newcommand{\socdualos}{\socdual{\os}}
\newcommand{\bounded}{{\opstrufont B}}
\newcommand{\linear}{{\opstrufont L}}
\newcommand{\boundedh}{\bounded (\hilbert)}
\newcommand{\obounded}{\linear_{\mathrm{ob}}}
\newcommand{\ocontinuous}{\linear_{\mathrm{oc}}}
\newcommand{\socontinuous}{\linear_{\sigma\mathrm{oc}}}
\newcommand{\linearop}[1]{\linear(#1)}
\newcommand{\oboundedop}[1]{\obounded(#1)}
\newcommand{\regularop}[1]{\linear_{\mathrm r}(#1)}
\newcommand{\ocontop}[1]{\ocontinuous(#1)}
\newcommand{\socontop}[1]{\socontinuous(#1)}
\newcommand{\alg}{\Omega}
\newcommand{\M}{\alg}
\newcommand{\mss}{\Delta}
\newcommand{\msstwo}{\Gamma}
\newcommand{\mssthree}{\Theta}
\newcommand{\ms}{(\pset,\alg)}
\newcommand{\msm}{(\pset,\alg,\npm,\os)}
\newcommand{\npm}{\mu}
\newcommand{\pom}{\npm^\ast}
\newcommand{\opint}[1]{I_{#1}}
\newcommand{\opintm}{\opint{\npm}}
\newcommand{\di}[1]{\,\upd #1}
\newcommand{\orderintegral}[3]{{\int_{#1}^{\mathrm{o}}\! {#2}\di {#3}}}
\newcommand{\ointm}[1]{\orderintegral{\pset}{#1}{\npm}}
\newcommand{\elemfun}{{{\lebfont E}(\pset,\alg;\posR)}}
\newcommand{\posextmeasfun}{{{\lebfont M}(\pset,\alg;\posRext)}}
\newcommand{\measfun}{{{\lebfont M} (\pset,\alg;\RR)}}
\newcommand{\posmeasfun}{{{\lebfont M}(\pset,\alg;\posR)}}
\newcommand{\integrablefun}{{{\lebfont L}^1(\pset,\alg,\npm;\RR)}}
\newcommand{\posintegrablefun}{{{\lebfont L}^1(\pset,\alg,\npm;\posR)}}
\newcommand{\aezerofun}{{{\lebfont N}(\pset,\alg,\npm;\RR)}}
\newcommand{\ellone}{{{\mathrm L}^1(\pset,\alg,\npm;\RR)}}
\newcommand{\cont}[1]{{\upC}(#1)}
\newcommand{\conto}[1]{\upC_0(#1)}
\newcommand{\contc}[1]{\upC_{\upc}(#1)}
\newcommand{\contts}{\cont{\ts}}
\newcommand{\contots}{\conto{\ts}}
\newcommand{\contcts}{\contc{\ts}}
\theoremstyle{plain}
\newtheorem{theorem0}{Theorem}[section]
\newtheorem{theorem}[theorem0]{Theorem}
\newtheorem{proposition}[theorem0]{Proposition}
\newtheorem{lemma}[theorem0]{Lemma}
\newtheorem{corollary}[theorem0]{Corollary}
\newtheorem*{theorem*}{Theorem}
\newtheorem*{proposition*}{Proposition}
\newtheorem*{lemma*}{Lemma}
\newtheorem*{corollary*}{Corollary}
\theoremstyle{definition}
\newtheorem{definition}[theorem0]{Definition}
\newtheorem{remark}[theorem0]{Remark}
\newtheorem*{definition*}{Definition}
\newtheorem*{example*}{Example}
\newtheorem*{remark*}{Remark}
\setlist[enumerate,1]{label=\textup{(\arabic*)},ref=\textup{(\arabic*)}}
\setlist[enumerate,2]{label=\textup{(\alph*)},ref=\textup{(\alph*)}}
\setlist[enumerate,3]{label=\textup{(\roman*)},ref=\textup{(\roman*)}}
\setlist[enumerate,4]{label=\textup{(\Alph*)},ref=\textup{(\Alph*)}}
\crefname{theorem}{Theorem}{Theorems}
\crefname{proposition}{Proposition}{Propositions}
\crefname{lemma}{Lemma}{Lemmas}
\crefname{corollary}{Corollary}{Corollaries}
\crefname{definition}{Definition}{Definitions}
\crefname{example}{Example}{Examples}
\crefname{remark}{Remark}{Remarks}
\crefname{equation}{equation}{equations}
\crefname{section}{Section}{Sections}
\crefname{subsection}{Section}{Sections}
\crefname{subsubsection}{Section}{Sections}
\begin{document}

%%%%%%%%%%%%%%%%%%%%%%%%%%%%%%%%% BEGIN FRONTMATTER %%%%%%%%%%%%%%%%%%%%%%%%%%%%%%%%%%%%

\title[Order integrals]{Order integrals}

\author{Marcel de Jeu}
\address[Marcel de Jeu]{Mathematical Institute, Leiden University, P.O.\ Box 9512, 2300 RA Leiden, The Netherlands\\
	and\\
	Department of Mathematics and Applied Mathematics, University of Pretoria, Corner of Lynnwood Road and Roper Street, Hatfield 0083, Pretoria,
	South Africa}
\email[Marcel de Jeu]{mdejeu@math.leidenuniv.nl}

\author{Xingni Jiang}
\address[Xingni Jiang]{College of Mathematics, Sichuan University, No.\ 24, South Section, First Ring Road, Chengdu, P.R.\ China}
\email[Xingni Jiang]{x.jiang@scu.edu.cn}

\subjclass[2010]{Primary 28B15; Secondary 46A40, 46B40, 47L99 }
\keywords{Measure, integral, partially ordered vector space, vector measure}

\begin{abstract}
	We define an integral of real-valued functions with respect to a measure that takes its values in the extended positive cone of a partially ordered vector space $E$. The monotone convergence theorem, Fatou's lemma, and the dominated convergence theorem are established; the analogues of the classical ${\mathcal L}^1$- and ${\mathrm L}^1$-spaces are investigated. The results extend earlier work by Wright and specialise to those for the Lebesgue integral when $E$ equals the real numbers.\\
	The hypothesis on $E$ that is needed for the definition of the integral and for the monotone convergence theorem to hold ($\sigma$-monotone completeness) is a rather mild one. It is satisfied, for example, by the space of regular operators between a directed partially ordered vector space and a $\sigma$-monotone complete partially ordered vector space, and by every JBW-algebra. Fatou's lemma and the dominated convergence theorem hold for every $\sigma$-Dedekind complete space.\\
	When $E$ consists of the regular operators on a Banach lattice with an order continuous norm, or when it consists of the self-adjoint elements of a strongly closed complex linear subspace of the bounded operators on a complex Hilbert space, then the finite measures as in the current paper are precisely the strongly $\sigma$-additive positive operator-valued measures. When $E$ is a partially ordered Banach space with a closed positive cone, then every positive vector measure is a measure in our sense, but not conversely. Even when a measure falls into both categories, the domain of the integral as defined in this paper can properly contain that of any reasonably defined integral with respect to the vector measure using Banach space methods.
\end{abstract}

\maketitle

\section{Introduction and overview}\label{1_sec:introduction}

\noindent Let $\ts$ be a locally compact Hausdorff space. The Riesz representation theorem states that, for a positive linear functional $\posmap:\contcts\to\RR$, there exists a Borel measure $\mu$ on $\ts$ such that
\[
\posmap(f)=\int_\ts\! f\di{\npm}
\]
for $f\in\contcts$. The measure $\npm$ is uniquely determined when certain regularity properties of it are supposed. It is bounded if and only if $\posmap$ extends to a  positive linear functional on $\contcts$, in which case the above equation holds for all $f\in\contots$.  In \cite{de_jeu_jiang:2021b}, we shall establish similar representation theorems for positive linear operators  $\posmap:\contcts\to\os$ and $\posmap:\contots\to\os$, where $\os$ is a (suitable) partially ordered vector space.\footnote{In the course of the present paper and its sequels \cite{de_jeu_jiang:2021b,de_jeu_jiang:2021c,de_jeu_jiang:2021d} we shall encounter maps with $\contcts$ or $\contots$ as domains that are sometimes positive linear operators, sometimes vector lattice homomorphisms, and sometimes positive algebra homomorphisms. For each of these contexts, a canonical symbol for such maps could be chosen. However, since our results for these contexts are related, we have chosen to use the same symbol throughout, thus keeping the notation as uniform as possible.} The class of spaces for which these theorems hold is fairly diverse. This is the case, for example, when $\os$ is a Banach lattice with an order continuous norm; when $\os$ consists of the regular operators on a KB-space; when $\os$ is the space of all self-adjoint elements of a strongly closed complex linear subspace of $\boundedh$, where $\hilbert$ is a complex Hilbert space; and when $\os$ is a JBW-algebra.\footnote{
	We shall use \cite{alfsen_shultz_STATE_SPACES_OF_OPERATOR_ALGEBRAS:2001,alfsen_shultz_GEOMETRY_OF_STATE_SPACES_OF_OPERATOR_ALGEBRAS:2003} references for JBW-algebras. In these books, a JBW-algebra is supposed to have an identity element; see \cite[Definitions~1.5 and~2.2]{alfsen_shultz_GEOMETRY_OF_STATE_SPACES_OF_OPERATOR_ALGEBRAS:2003}. In other sources, this need not be the case. However, as \cite[Lemma~4.1.7]{hanche-olsen_stormer_JORDAN_OPERATOR_ALGEBRAS:1984} shows, the existence of an identity element is, in fact, automatic.}
	 In \cite{de_jeu_jiang:2021c}, we shall consider positive algebra homomorphisms $\posmap$ from $\contcts$ or $\contots$ into (suitable) partially ordered algebras. The representing measures are then spectral measures that take values in the algebra. This existence theorem for abstract spectral measures immediately implies the classical one for representations of (the complexification of) $\contots$ on complex Hilbert spaces, as well as the one for positive representations of $\contots$ on KB-spaces in \cite{de_jeu_ruoff:2016}. In \cite{de_jeu_jiang:2021d}, we shall be concerned with representation theorems for vector lattices (resp.\ Banach lattices) of regular linear operators from $\contcts$ and $\contots$ into \Dc\ vector lattices (resp.\ Banach lattices with order continuous norms) in the spirit of \cite[Theorem~38.7]{aliprantis_burkinshaw_PRINCIPLES_OF_REAL_ANALYSIS_THIRD_EDITION:1998}. The relation with existing representation theorems for positive linear operators will be discussed in \cite{de_jeu_jiang:2021b}. There appears to be no previous work in the vein of \cite{de_jeu_jiang:2021c} or \cite{de_jeu_jiang:2021d}.

The representation theorems in \cite{de_jeu_jiang:2021b,de_jeu_jiang:2021c,de_jeu_jiang:2021d} are all of the following form. For a positive operator $\posmap$ from $\contcts$, say, into a partially ordered vector space $\os$, there exists a Borel measure $\npm$ on $\ts$ such that
\[
\posmap(f)=\ointm{f}
\]
for $f\in\contcts$. Here $\npm$ assigns an element of the extended positive cone of $\os$ to each Borel subset of $\ts$. The integral is, as we have called it in the current paper, the order integral\textemdash hence the superscript\textemdash of $f$ with respect to $\npm$. The goal of the present paper is to develop the theory of this order integral to the extent where also the three basic convergence theorems have been made available. It provides the very language for the sequels \cite{de_jeu_jiang:2021b,de_jeu_jiang:2021c,de_jeu_jiang:2021d} to the present paper, but its results\textemdash including the convergence theorems\textemdash may also be of use elsewhere. In fact, as we shall argue in \cref{1_sec:comparison_with_banach_space_case}, when $\os$ is a partially ordered Banach space, then the order integral provides a tool to work with that is better than the integral with respect to positive vector measures.

\medskip

This paper is organised as follows.

In \cref{1_sec:partially_ordered_vector_spaces}, we collect the necessary prerequisites about partially ordered vector spaces. It is explained how a point at infinity can be added to a partially order vector space $\os$, to accommodate the fact that\textemdash as is already obvious from Lebesgue measure on the real line\textemdash representing measures need not be finite. The section also contains a stockpile of technical tools that are helpful when working with the ordering in $\os$ and the extended space $\osext$ in the present paper and its sequels.

\cref{1_sec:normal_monotone_complete_spaces} provides a number of examples of spaces where the representation theorems in \cite{de_jeu_jiang:2021b} are valid.
This material is not yet needed in the current paper. We have, nevertheless, still included it here, to show that there are natural spaces, including spaces of operators, to which the theory of the order integral in this paper applies.

In \cref{1_sec:positive_osext-valued_measure}, measures with values in the extended positive cone $\pososext$ of a partially ordered vector space $\os$ are introduced. The basic (convergence) properties are established and the Borel--Cantelli lemma is proved. For two important examples of spaces of operators, it is shown that the finite measures in our sense are precisely the strongly $\sigma$-additive positive operator-valued measures. In this section, it is still possible to work with algebras rather than $\sigma$-algebras of subsets.

\cref{1_sec:outer_measures} covers outer measures in the context of partially ordered vector spaces. This material is needed in the proof of one of the Riesz representation theorems in \cite{de_jeu_jiang:2021b}, but does not reappear in the present paper.

\cref{1_sec:integration_with_respect_to_a_positive_os-valued_measure} starts with the definition of the order integral for measurable functions with values in the extended positive real numbers; the measure is now supposed to be defined on a $\sigma$-algebra. After that, the monotone convergence theorem, Fatou's lemma and the dominated convergence theorem are established. The section concludes with vector lattice properties of the general ${\lebfont L}^1$- and $\Ell^1$-spaces.

In the final \cref{1_sec:comparison_with_banach_space_case}, we consider the situation when the partially ordered vector space happens to be a Banach space with a closed positive cone. In this case, one can also speak of positive vector measures and ask for the relation between such measures and the positive measures in \cref{1_sec:positive_osext-valued_measure}, and also for the relation between their integrals. It will become clear that every positive vector measure is a positive measure as in \cref{1_sec:positive_osext-valued_measure}, but not conversely. It can occur that a measure falls in both categories, while the order integral properly extends any integral that one may reasonably define using Banach space methods.

\medskip

It appears that Wright was the first to realise how the $\sigma$-additivity of a measure with values in the extended positive real numbers can be generalised to measures with values in the extended positive cone of a  partially ordered vector space $\os$, and that a theory of integration can be built on this.  This was first done in \cite{wright:1969} when $\os$ is a Stone algebra, i.e., a Banach lattice algebra of the form $\contts$ for an extremally disconnected compact Hausdorff space $\ts$. It is also mentioned there that this can be done equally well if $\os$ is a \sDc\ Banach lattice. Topology is no longer present in \cite[p.~193]{wright:1971a}, where it is noted that an order integral can be defined if $\os$ is a \sDc\ vector lattice, and that analogues of the Lebesgue convergence theorems can be obtained. Details are, however, not included. In \cite[p.~678]{wright:1972}, the measures are defined in the most general context\textemdash that of a \smc\footnote{See \cref{1_def:order_completeness}.} partially ordered vector space\textemdash where this definition is meaningful. \cref{1_def:positive_pososext_valued_measure} in the current paper is taken from that source. An order integral is defined in \cite{wright:1972} and a monotone convergence theorem is established. Fatou's lemma and the dominated convergence theorem are alluded to as more complicated results to be worked on later; the outcome appears not to have been published.

Apart from the facts that we extend the theory well beyond that in \cite{wright:1969,wright:1971a,wright:1972} by including results such as the Borel--Cantelli lemma, Fatou's lemma, the dominated convergence theorem, as well as material on outer measures and vector lattice properties of ${\lebfont L}^1$- and $\Ell^1$-spaces, there are also two important differences between the approach in \cite{wright:1969,wright:1971a,wright:1972} and that in our work. Firstly, we define the order integral for measurable functions that take values in the \emph{extended} positive real numbers. The presence of an extra `infinity' for functions besides the one for the measure is technically a little more complicated than when working with finite-valued functions. It is, however, desirable, to allow this so that the sharpest versions of the monotone convergence theorem and Fatou's lemma can be formulated and proved. We shall benefit from this when studying ups and downs in \cite{de_jeu_jiang:2021c}. Secondly, we believe that our approach to the definition of the order integral of an (extended) positive measurable function is more natural. In \cite{wright:1969,wright:1972}, the integral of a measurable function $f:\pset\to\posR$ is defined to be infinity when there exists a $c>0$ such that $\{x\in \pset : f(x)>c\}$ has infinite measure. When this is not the case, then $f$ can be approximated pointwise from below by (finite-valued) elementary functions that have supports with finite measure; the integral of $f$ is then defined using such approximants. In our approach, the integral of a measurable function with values in the extended positive real numbers is always defined using (finite-valued) elementary functions. Such an elementary function is allowed to have a support with infinite measure, in which case its integral is automatically infinity as a consequence of the action of the positive real numbers on the extended positive cone $\pososext$ of $\os$. Provided one argues carefully with the operations on and the ordering of this extended cone, one thus obtains a completely natural integration theory for measurable functions with values in the extended positive real numbers. This is then used as a the starting point for finite-valued measurable functions.

\medskip

The order integral as defined in the present paper has the usual Lebesgue integral as a special case. It will become clear that, in the end, it is possible to choose arguments for the real case that, after being reformulated and adapted appropriately, yield valid proofs for the general case. Still, there are a few caveats when glancing over a proof for the real case and concluding, perhaps all too quickly, that the result holds more generally. Consider, for example, the fact that a measurable function that is almost everywhere equal to zero has zero integral. For positive functions, this follows from the definitions. For general functions, this then `obviously' follows from the inequality  $\abs{\int f\di{\npm}}\leq\int\abs{ f}\di{\npm}$. This inequality is, however, meaningless for general partially ordered vector spaces, which implies that the proof of \cite[Proposition~2.23.b]{folland_REAL_ANALYSIS_SECOND_EDITION:1999} cannot be used in the general case. Even though it is not difficult to remedy this, it still shows that it is easy to make mistakes when thinking that results from the real case are `clearly' also true in general. The real numbers form a topological algebra, a complete metric space, and their partial ordering is a linear ordering. Arguments that rely on these properties\textemdash which are entrenched in our way of thinking\textemdash are to be circumvented for the general theory. One has to (be enabled to) convince oneself\textemdash with the formal technical tools in \cref{1_sec:partially_ordered_vector_spaces} at hand\textemdash that it is actually possible to do this. It is for this reason that we have given proofs of all results.\footnote{\cref{1_res:integrals_of_elementary_functions} and a few statements surrounding it form an exception.} Some of them\textemdash and in particular arguments of an algebraic nature that work for any commutative monoid\textemdash are identical to those for the real case. Many of them, however, have to be adapted to some extent for the general context. We did not want to necessitate the reader to keep moving back and forth between other sources and the paper and have, therefore, kept the latter self-contained.

\medskip

\begin{remark*}\label{1_rem:reverse_roles}
In the current paper, the function is real-valued and the measure takes its values in (the extension of) a partially ordered vector space. These roles can be reversed. For this set-up, the reader is referred to \cite{groenewegen_THESIS:1983,jeurnink_THESIS:1983,kaplin_fijavz_2020:generation_of_relatively_uniformly_continuous_semigroups_on_vector_lattices,van_rooij_van_zuijlen:2016,van_rooij_van_zuijlen:2017}.
\end{remark*}

\section{Partially ordered vector spaces}
\label{1_sec:partially_ordered_vector_spaces}

\noindent In this section, we establish some terminology and notation for partially ordered vector spaces, and collect a number of technical facts. We introduce various types of order completeness and relate these to the extended space that is obtained by adjoining a point at infinity.

\medskip

Unless otherwise indicated, all vector spaces we shall consider are over the real numbers. Operators between two vector spaces are always supposed to be linear, as are functionals. All vector lattices are supposed to be Archimedean.

If $\os$ is a partially ordered vector space, then $\pos{\os}$ denotes its positive cone.  We do not require that $\pos{\os}$ be generating, i.e.,  it need not be the case that $\os=\pos{\os}-\pos{\os}$. Equivalently, we do not require that $\os$ be directed. We do require, however, that $\pos{\os}$ be proper, i.e., that $\pos{\os}\cap\left(-\pos{\os}\right)=\{0\}$.

Subsets of a partially ordered vector space $\os$ of the form $\{x\in\os: a\leq x\leq b\}$ for $a,b\in \os$ such that $a\leq b$, are called \emph{order intervals} in $\os$; they are denoted by $[a,b]$. A subset of $\os$ is called \emph{order bounded} if it is contained in an order interval.

\medskip

 Order completeness properties of partially ordered vector spaces are at the heart of the current paper and the sequels \cite{de_jeu_jiang:2021b,de_jeu_jiang:2021c,de_jeu_jiang:2021d}.
 We list them in  the following definition, which also contains some self-evident notation that we shall use. Index sets for nets are supposed to be partially ordered, and not just pre-ordered.

\begin{definition}\label{1_def:order_completeness} A partially ordered vector space $\os$ is called
\begin{enumerate}
\item\label{1_part:order_completeness_1}
\emph{\smc} if every increasing sequence $\seq{a}$ in $\os$ that is bounded from above has a supremum $\bigvee_{n=1}^{\infty}a_n$ in $\os$;
\item\label{1_part:order_completeness_2}
\emph{\mc} if every increasing net $\net{a}$ in $\os$ that is bounded from above has a supremum $\bigvee_{\lambda\in\Lambda}a_\lambda$ in $\os$;
\item\label{1_part:order_completeness_3}
\emph{\sDc} if every non-empty at most countably infinite subset $S$ of $\os$ that is bounded from above has a supremum $\bigvee\{x : x\in S \}$ in $\os$;
\item\label{1_part:order_completeness_4}
\emph{\Dc} if every non-empty subset $S$ of $\os$ that is bounded from above has a supremum $\bigvee\{x : x\in S\}$ in $\os$.
\end{enumerate}
\end{definition}

Equivalently, one can define these properties by requiring the existence of infima when replacing `increasing' with `decreasing' and `bounded from above' with `boun\-ded from below'. Still equivalently, one can define these properties under the supposition that the sequence, net, or subset is contained in $\posos$.

There are evident logical implications between these four properties. For vector lattices, \Dc ness (resp.\ \sDc ness) and \mc ness (resp.\ \smc ness) are equivalent. If $\os$ is directed and if $\os$ is \sDc, then, for every $x_1,x_2\in\os$, the subset $\{x_1,x_2\}$ is bounded from above, so that it has a supremum. Hence $\os$ is then a vector lattice.

\medskip

 A partially ordered vector space $\os$ is \emph{Archimedean} if $\bigwedge\{\varepsilon x : \varepsilon >0\}=0$ for all $x\in\pos{\os}$. One can equivalently require that $\bigwedge \{r_n x : n\in\NN\}=0$ for every $x\in\pos{\os}$ and every (or just one) sequence $\{r_n\}_{n=1}^{\infty}\subseteq\posR\setminus\{0\}$ such that $r_n\downarrow 0$. Still equivalently, one can require that, whenever $y\in\pos{\os}$ and $x$ are such that $nx\leq y$ for all $n\in\NN$ (or such that $rx\leq y$ for all $r\in\posR$), it follows that $x\leq 0$. As was observed in \cite[Lemma~1.1]{wright:1972}, every \smc\  partially vector space (and then also every \mc, \sDc, or \Dc\  partially order vector space) is Archimedean. Indeed, if $x\geq 0$, then $\bigwedge\{x/n : n\in\NN\}$ exists since $\os$ is \sDc, and it satisfies $\bigwedge\{x/n : n\in\NN\}=\bigwedge\{x/(2n) : n\in\NN\}=1/2 \bigwedge\{x/n : n\in\NN\}$. Hence $\bigwedge\{x/n : n\in\NN\}=0$. We shall use this automatic Ar\-chi\-me\-dean property at a few essential moments; see the proof of \cref{1_res:key_lemma_for_well-definednes_of_the_integral}, for example.

\medskip

 We record the following analogue of the well-known inequality for the limit inferior and the limit superior of a sequence of real numbers. We shall need it in the proof of the dominated convergence theorem; see \cref{1_res:dominated_convergence_theorem}. The proof is completely analogous to the proof for the case where $\os=\RR$.

\begin{lemma}\label{1_res:liminf_and_limsup}
Let $\os$ be a \sDc\  partially ordered vector space, and let $\seq{x}$ be an order bounded sequence in $\os$. Then $\bigvee_{n=1}^\infty\bigwedge_{k=n}^\infty x_k$ and $\bigwedge_{n=1}^\infty\bigvee_{k=n}^\infty x_k$ exist in $\os$, and
\[
\bigvee_{n=1}^\infty\bigwedge_{k=n}^\infty x_k\leq\bigwedge_{n=1}^\infty\bigvee_{k=n}^\infty x_k.
\]
\end{lemma}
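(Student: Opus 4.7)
The plan is to mimic the classical real-variable proof, taking care only to invoke $\sigma$-Dedekind completeness at each point where a sequential supremum or infimum is needed, and to use purely order-theoretic manipulations (no metric or topological limits).

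First, I would check that the inner infima and suprema exist. Since $\seq{x}$ is order bounded, fix $a,b\in\os$ with $a\leq x_n\leq b$ for every $n\in\NN$. For each fixed $n\in\NN$, the at most countably infinite subset $\{x_k:k\geq n\}$ is bounded from above by $b$ and from below by $a$, so by the \sDc ness of $\os$ both $y_n\defeq\bigwedge_{k=n}^\infty x_k$ and $z_n\defeq\bigvee_{k=n}^\infty x_k$ exist in $\os$. (Here the infimum version of \cref{1_def:order_completeness}, noted immediately after it, is used.)

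Next I would observe the monotonicity and boundedness of $\seq{y}$ and $\seq{z}$. Since taking an infimum over a smaller set can only increase it, $y_n\leq y_{n+1}$; dually $z_{n+1}\leq z_n$. Moreover $a\leq y_n\leq z_n\leq b$ for every $n$. Applying \sDc ness once more to the at most countably infinite sets $\{y_n:n\in\NN\}$ and $\{z_n:n\in\NN\}$ yields the existence of $\bigvee_{n=1}^\infty y_n=\bigvee_{n=1}^\infty\bigwedge_{k=n}^\infty x_k$ and $\bigwedge_{n=1}^\infty z_n=\bigwedge_{n=1}^\infty\bigvee_{k=n}^\infty x_k$ in $\os$.

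Finally, for the inequality, I would prove that $y_n\leq z_m$ for \emph{all} pairs $n,m\in\NN$. Indeed, set $N\defeq\max(n,m)$; then $y_n\leq x_N$ because $y_n$ is a lower bound for $\{x_k:k\geq n\}$ and $N\geq n$, while $x_N\leq z_m$ because $z_m$ is an upper bound for $\{x_k:k\geq m\}$ and $N\geq m$. Hence, for each fixed $n$, $y_n$ is a lower bound for $\{z_m:m\in\NN\}$, so $y_n\leq\bigwedge_{m=1}^\infty z_m$ by definition of infimum. This makes $\bigwedge_{m=1}^\infty z_m$ an upper bound for $\{y_n:n\in\NN\}$, whence $\bigvee_{n=1}^\infty y_n\leq\bigwedge_{m=1}^\infty z_m$, which is the claim.

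There is no real obstacle; the only point to be careful about is to resist using anything like a limit of differences $z_n-y_n$ or a $\liminf/\limsup$ characterisation via a total order, since the ordering on $\os$ is in general only partial. The argument above uses only the definitions of supremum and infimum and the \sDc ness hypothesis, and is formally identical to the $\os=\RR$ case once those ingredients are in place.
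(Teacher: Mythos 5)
Your proof is correct and is precisely the argument the paper intends: the paper omits the proof entirely, remarking only that it is ``completely analogous to the proof for the case where $\os=\RR$''. Your order-theoretic rendering of that classical argument\textemdash existence of the inner extrema from \sDc ness, monotonicity of the two auxiliary sequences, and the cross inequality $\bigwedge_{k=n}^\infty x_k\leq\bigvee_{k=m}^\infty x_k$ for all $n,m$\textemdash is exactly that analogue.
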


In \cite{de_jeu_jiang:2021b}, we shall, amongst others, consider Riesz representation theorems for positive operators $\posmap:\contcts\to \os$; here $\ts$ is a locally compact Hausdorff space and $\os$ is a partially ordered vector space. Once could hope that, ideally, such theorems would state that this operator $\posmap$ is given by integration of a scalar-valued function with respect to an $\posos$-valued measure. However, as the case where $\os=\RR$ and $\posmap(f)\coloneqq\int_\RR\! f\di{x}$ already shows, we cannot expect this measure to be actually finite. To be able to develop a theory of measure and integration that incorporates this inevitable phenomenon, we need to adjoin an element $\largest$ to $\posos$ and let $\posR$ act on the augmented structure.  As is also done in, e.g., \cite{wright:1969,wright:1971a}, we shall actually adjoin $\largest$  to the whole space $\os$, which is necessary for a formulation of some of the results; see part~\ref{1_part:all_to_be_expected_4} of \cref{1_res:all_to_be_expected}), for example. The construction is as follows.

Firstly, we let $\osext\coloneqq\os\cup\{\largest\}$ be a disjoint union, and we extend the partial ordering from $\os$ to  $\osext$ by declaring that $x\leq \largest$ for all $x\in\osext$. The elements of $\osext$ that are in $\os$ will be called \emph{finite}. We set $\pososext\coloneqq\posos\cup\{\largest\}$. Then $\pososext$ is the set of positive elements of $\osext$.  Secondly, we make $\osext$ into an abelian additive monoid by defining $\largest+x\coloneqq\largest$ and $x+\largest\coloneqq\largest$ for all $x\in \osext$; then $\pososext$ is a sub-monoid of $\osext$. Thirdly, we define $r\cdot\largest\coloneqq\largest$ for all $r\in\posR\setminus\{0\}$, and define $0\cdot\largest\coloneqq 0$. Thus the additive monoid $\posR$ and the  multiplicative monoid $\posR$ both act as monoid homomorphisms on $\pososext$.
It is easily checked that, when $x,y\in\osext$ are such that $x\leq y$, then $x+z\leq y+z$ for all $z\in\osext$, and that $rx\leq sy$ for all $r,\,s\in\posR$ such that $r\leq s$.

We have now carried out the desired construction, but one can also go further, as follows. The construction in the previous paragraph can be applied to $\RR$. The set of positive elements of $\Rext$ is then the familiar extended positive real half line $\posRext$. It is an abelian additive monoid. The action of $\posR$ on $\pososext$ can then be extended to an action of $\posRext$ on $\pososext$ by defining $\largest\cdot 0\coloneqq 0$ and $\infty\cdot x\coloneqq\largest$ for all $x\in\pososext\setminus\{0\}$. Then the additive monoid $\posRext$ acts as monoid homomorphisms on $\pososext$. If $x,y\in\pososext$ are such that $x\leq y$ and $r,\,s\in\posRext$ are such that $r\leq s$, then $rx\leq sy$.

The construction in the previous step can be applied with $\os=\RR$, which yields an action of $\posRext$ on itself. Thus the familiar multiplicative structure on the extended positive real half line is obtained.  It is compatible with the action of $\posRext$ on $\pososext$: if $r,\,s\in\posRext$ and $x\in\pososext$, then $r\cdot(s\cdot x)=(rs)\cdot x$. Hence the multiplicative monoid $\posRext$ also acts as monoid homomorphisms on $\pososext$.

\medskip

 We shall employ the usual notation in which $a_\lambda\uparrow$ means that $\net{a}$ is an increasing net in $\os$ (or in $\osext$), and in which $a_\lambda\uparrow x$ means that $\net{a}$ is an increasing net in $\os$ (or in $\osext$) with supremum $x$ in $\os$ (or in $\osext$). The notations $a_\lambda\downarrow$ and    $a_\lambda\downarrow x$ are similarly defined. We shall be careful to indicate explicitly whether we are working in $\os$ or in $\osext$ whenever this is necessary.

In the next two results, we collect a few technical facts that will be used repeatedly in the sequel. They are quite obvious when $\os=\RR$, but it is essential for many of the proofs in the current paper and its sequels that they are generally valid. The tedious proofs are elementary.

\begin{lemma}\label{1_res:operations_in_extended_space}
Let $\os$ be a partially ordered vector space, and let $S\subseteq \osext$ be non-empty.
\begin{enumerate}
\item\label{1_part:operations_in_extended_space_1}
If $S\subseteq \os$ and  $\bigvee\{s : s\in S\}$ exists in $\os$, then this supremum is also the supremum of $S$ in $\osext$; likewise for the infimum.
\item\label{1_part:operations_in_extended_space_2}
If $\bigvee\{s : s\in S\}$ exists in $\osext$ and is finite, then $S$ consists of finite elements, and the supremum of $S$  exists in $\os$ and equals the supremum of $S$ in $\osext$.
\item\label{1_part:operations_in_extended_space_3}
If $S\neq\{\largest\}$, then $\bigwedge\{s : s\in S\}$ exists in $\osext$ if and only if $\bigwedge\{s : s\in S\cap \os\}$ exists in $\os$. If this is the case, then these infima are equal.
\item\label{1_part:operations_in_extended_space_4}
$\bigvee\{s : s\in S\}=\infty$ in $\osext$ if and only if $S$ is not bounded from above by a finite element.
\item\label{1_part:operations_in_extended_space_5}
If $\bigvee\{s : s\in S\}$ exists in $\osext$, then, for all $x\in \osext$,  $\bigvee\{x+ s : s\in S\}$ exists in $\osext$ and equals $x+\bigvee\{s : s\in S\}$; likewise for the infimum.
\item\label{1_part:operations_in_extended_space_6}
If $\bigvee\{x+s: s\in S\}$ exists in $\osext$ for some $x\in\os$, then $\bigvee\{ s : s\in S\}$ exists in $\osext$, and $\bigvee\{x + s: s\in S\}=x+\bigvee\{s: s\in S\}$; likewise for the infimum.
\item\label{1_part:operations_in_extended_space_7}
If $\bigvee\{s : s\in S\}$ exists in $\osext$, then, for all $r\in\posR$, $\bigvee\{rs : s\in S\}$ exists in $\osext$ and equals $r\bigvee\{s : s\in S\}$; likewise for the infimum.
\end{enumerate}
\end{lemma}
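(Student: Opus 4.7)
The plan is to verify the seven parts by unwinding the definitions of the order and arithmetic on $\osext$, reducing each assertion either to a statement inside $\os$ or to a trivial calculation involving $\largest$. The organising observation is that, for a non-empty $S\subseteq\osext$, an upper bound in $\osext$ is either finite or equals $\largest$, whereas a lower bound must be finite unless $S=\{\largest\}$, because $\largest\not\leq x$ for any finite $x$.

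For part~\ref{1_part:operations_in_extended_space_1}, with $S\subseteq\os$ the upper bounds of $S$ in $\osext$ are exactly those in $\os$ together with $\largest$; since $\largest$ dominates any upper bound lying in $\os$, the least upper bound is unchanged, and the infimum is handled similarly because $\largest$ cannot be a lower bound of a non-empty subset of $\os$. Part~\ref{1_part:operations_in_extended_space_2} is the converse: a finite supremum in $\osext$ forces $\largest\notin S$, after which~\ref{1_part:operations_in_extended_space_1} applies in reverse. Part~\ref{1_part:operations_in_extended_space_3} follows because $S$ and $S\cap\os$ have the same lower bounds in $\osext$ (the inequality $\ell\leq\largest$ is automatic), so one may apply~\ref{1_part:operations_in_extended_space_1} to $S\cap\os$. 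Part~\ref{1_part:operations_in_extended_space_4} is then immediate from the same dichotomy applied to upper bounds.

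Parts~\ref{1_part:operations_in_extended_space_5}--\ref{1_part:operations_in_extended_space_7} require a little more care. For~\ref{1_part:operations_in_extended_space_5}, I split on whether $x=\largest$, in which case both sides equal $\largest$, or $x\in\os$. In the finite case, if $\bigvee\{s:s\in S\}=\largest$ then by~\ref{1_part:operations_in_extended_space_4} the set $S$ has no finite upper bound, whereas a finite upper bound $y$ of $x+S$ would force $\largest\notin S$ and produce the finite upper bound $y-x$ of $S$, a contradiction. If the supremum is finite, then by~\ref{1_part:operations_in_extended_space_2} every element of $S$ is finite and the usual vector space calculation gives the claim. Part~\ref{1_part:operations_in_extended_space_6} is obtained by applying~\ref{1_part:operations_in_extended_space_5} to $-x\in\os$ and the set $x+S$, after checking that $-x+(x+s)=s$ for every $s\in\osext$, including the case $s=\largest$ where $-x+\largest=\largest$. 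For~\ref{1_part:operations_in_extended_space_7}, the case $r=0$ is handled by the convention $0\cdot\largest=0$, and for $r>0$ multiplication by $r$ is a bijection of $\osext$ that restricts to the usual scaling on $\os$, fixes $\largest$, and preserves and reflects the order, so it transports both suprema and infima.

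The assertions about infima throughout are proved by the evident symmetric arguments, which are in fact slightly simpler since lower bounds of a non-empty set meeting $\os$ are automatically finite. I do not anticipate a genuine obstacle; the only bookkeeping that needs attention is in~\ref{1_part:operations_in_extended_space_5} and~\ref{1_part:operations_in_extended_space_6}, where one must track whether $\largest$ occurs in $S$ before subtracting $x$ and must legitimise the cancellation $(-x)+(x+s)=s$ within the additive monoid $\osext$.
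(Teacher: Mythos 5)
Your proposal is correct. The paper does not actually supply a proof of this lemma (it only remarks that ``the tedious proofs are elementary''), and your systematic reduction---splitting on whether $\largest$ occurs, using the observation that lower bounds of a set meeting $\os$ are automatically finite, and deriving part~(6) from part~(5) by translating by $-x$ after checking $(-x)+(x+s)=s$ in the monoid $\osext$---is exactly the kind of routine verification the authors intend, carried out cleanly and with the right attention to the edge cases.
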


\begin{lemma}\label{1_res:binary_operations_in_extended_space}
Let $\os$ be a partially ordered vector space.
\begin{enumerate}
\item\label{1_part:binary_operations_in_extended_space_1}
If $A$ and $B$ are non-empty subsets of $\osext$ such that $\bigvee\{a : a\in A\}$ and $\bigvee\{b\colon b\in B\}$ exist in $\osext$, then $\bigvee\{a+b : a\in A,b\in B\}$ exists in $\osext$ and equals $\bigvee\{a : a\in A\}+\bigvee\{b : b\in B\}$; likewise for the infima.
\item\label{1_part:binary_operations_in_extended_space_2}
If $\net{a}$ and $\net{b}\subseteq \osext$ are nets in $\osext$ and $a,b\in\osext$ are such that $\net{a}\uparrow a$ and $\net{b}\uparrow b$ in $\osext$, then $\{a_\lambda+b_\lambda\}\uparrow (a+b)$ in $\osext$; likewise for decreasing nets.
\end{enumerate}
\end{lemma}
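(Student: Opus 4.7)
The plan is to derive both parts from \cref{1_res:operations_in_extended_space}\ref{1_part:operations_in_extended_space_5}, which states that suprema and infima in $\osext$ commute with translation by an arbitrary element of $\osext$; no further structural input is required.

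For part~\ref{1_part:binary_operations_in_extended_space_1}, write $\alpha\coloneqq\bigvee\{a : a\in A\}$ and $\beta\coloneqq\bigvee\{b : b\in B\}$. The compatibility of addition with the order on $\osext$ (recorded when the monoid structure was defined) yields $a+b\leq\alpha+\beta$ for all $a\in A$ and $b\in B$, so $\alpha+\beta$ is an upper bound for $\{a+b : a\in A,\,b\in B\}$. To see that it is the least one, suppose that $x\in\osext$ satisfies $a+b\leq x$ for all such $a,b$. Fixing $b\in B$, \cref{1_res:operations_in_extended_space}\ref{1_part:operations_in_extended_space_5} gives that $\bigvee\{a+b : a\in A\}$ exists in $\osext$ and equals $\alpha+b$, so $\alpha+b\leq x$ for every $b\in B$; a second application yields $\alpha+\beta=\bigvee\{\alpha+b : b\in B\}\leq x$. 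The statement for infima is proved by the same two-step procedure, invoking the ``likewise for the infimum'' clause of \cref{1_res:operations_in_extended_space}\ref{1_part:operations_in_extended_space_5}.

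For part~\ref{1_part:binary_operations_in_extended_space_2}, monotonicity of addition on $\osext$ shows at once that $\{a_\lambda+b_\lambda\}_{\lambda\in\Lambda}$ is increasing and bounded above by $a+b$. Suppose that $x\in\osext$ is any upper bound of this net. Because $\Lambda$ is directed, for any $\lambda,\mu\in\Lambda$ there exists $\nu\in\Lambda$ with $\nu\geq\lambda$ and $\nu\geq\mu$, so that $a_\lambda+b_\mu\leq a_\nu+b_\nu\leq x$. Applying part~\ref{1_part:binary_operations_in_extended_space_1} to $A\coloneqq\{a_\lambda : \lambda\in\Lambda\}$ and $B\coloneqq\{b_\mu : \mu\in\Lambda\}$ then gives $a+b=\bigvee\{a_\lambda+b_\mu : \lambda,\mu\in\Lambda\}\leq x$, so $a+b$ is indeed the supremum of $\{a_\lambda+b_\lambda\}_{\lambda\in\Lambda}$ in $\osext$. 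The decreasing case runs symmetrically, with suprema replaced by infima and inequalities reversed.

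The main obstacle is not genuinely hard but requires discipline: in $\osext$ one cannot subtract (addition is absorbing at $\largest$), so any manoeuvre that silently rearranges expressions such as $a_\lambda+b_\lambda-(a+b)$ is unavailable. The systematic remedy is to peel off one variable at a time through \cref{1_res:operations_in_extended_space}\ref{1_part:operations_in_extended_space_5}; in part~\ref{1_part:binary_operations_in_extended_space_2} one must additionally use directedness of $\Lambda$ in order to decouple the indices $\lambda$ and $\mu$ before part~\ref{1_part:binary_operations_in_extended_space_1} can be brought to bear.
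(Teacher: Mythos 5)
Your proof is correct. The paper itself omits the argument, stating only that ``the tedious proofs are elementary,'' so there is no official proof to compare against; your two-step strategy\textemdash peeling off one variable at a time via part~\ref{1_part:operations_in_extended_space_5} of \cref{1_res:operations_in_extended_space} for part~\ref{1_part:binary_operations_in_extended_space_1}, and then decoupling the indices through directedness of $\Lambda$ to reduce part~\ref{1_part:binary_operations_in_extended_space_2} to part~\ref{1_part:binary_operations_in_extended_space_1}\textemdash is precisely the kind of elementary, subtraction-free argument the authors intend, and it correctly avoids the trap of rearranging differences in the monoid $\osext$ where $\largest$ is absorbing.
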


\begin{lemma}\label{1_res:completeness_properties_of_extended_space} Let $\os$ be a partially ordered vector space.
\begin{enumerate}
\item\label{1_part:completeness_properties_of_extended_space_1}
If $\os$ is \smc\  \uppars{resp.\ \sDc}, then every in\-crea\-sing sequence in \uppars{resp.\ every at most countably infinite subset of} $\osext$ has a supremum in $\osext$. If the sequence \uppars{resp.\ set} is bounded from above by a finite element, then the supremum in $\osext$ equals the supremum in $\os$. If the sequence \uppars{resp.\ subset} is not bounded from above by a finite element, then the supremum in $\osext$ equals $\largest$.
\item\label{1_part:completeness_properties_of_extended_space_2}
If $\os$ is \smc\  \uppars{resp.\ \sDc}, then every de\-crea\-sing sequence in \uppars{resp.\ every non-empty at most countably infinite sub\-set of} $\osext$ that is bounded from below in $\osext$ has an infimum in $\osext$. If all terms of the sequence are equal to $\largest$ \uppars{resp.\ if the set equals $\{\largest\}$}, then this infimum in $\osext$ equals  $\largest$. If the sequence contains finite terms \uppars{resp.\ if the subset contains finite elements}, then the infimum in $\osext$ equals the infimum in $\os$ of the decreasing subsequence of finite terms \uppars{resp.\ the subset of finite elements of the subset}, which is bounded from below by a finite element.
\item\label{1_part:completeness_properties_of_extended_space_3}
If $\os$ is \mc\  \uppars{resp.\ \Dc}, then every increasing net in \uppars{resp.\ every non-empty subset of} $\osext$ has a supremum in $\osext$. If the set \uppars{resp.\ net} is bounded from above by a finite element, then the supremum in $\osext$ equals the supremum in $\os$. If the set \uppars{resp.\ net} is not bounded from above by a finite element, then the supremum in $\osext$ equals $\largest$.
\item\label{1_part:completeness_properties_of_extended_space_4}
If $\os$ is \mc\  \uppars{resp.\ \Dc}, then every decreasing net in  \uppars{resp.\ every non-empty subset of} $\osext$ that is bounded from below in $\osext$ has an infimum in $\osext$. If all terms of the net are equal to $\largest$ \uppars{resp.\ if the subset equals $\{\largest\}$}, then this infimum in $\osext$ equals  $\largest$. If the net contains finite terms \uppars{resp.\ if the subset contains finite elements}, then the infimum in $\osext$ equals the infimum in $\os$ of the decreasing subnet of finite terms \uppars{resp.\ the subset of finite elements of the subset}, which is bounded from below by a finite element.
\end{enumerate}
\end{lemma}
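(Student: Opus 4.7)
The plan is to treat each of the four parts in turn, using \cref{1_res:operations_in_extended_space} as the transfer tool between $\os$ and $\osext$, together with the definitions in \cref{1_def:order_completeness} and the remark that each of the completeness properties may be equivalently formulated with `decreasing' and `bounded from below' in place of `increasing' and `bounded from above'.

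For part~\ref{1_part:completeness_properties_of_extended_space_1}, given an increasing sequence (resp.\ at most countably infinite subset) $S\subseteq\osext$, I would dichotomise on whether $S$ has a finite upper bound. If it does, then no element of $S$ can equal $\largest$ (since no finite element dominates $\largest$), so $S\subseteq\os$ is bounded from above in $\os$; $\sigma$-monotone completeness (resp.\ $\sigma$-Dedekind completeness) then supplies a supremum in $\os$, which by \cref{1_res:operations_in_extended_space}\ref{1_part:operations_in_extended_space_1} is also the supremum in $\osext$. If $S$ admits no finite upper bound, then \cref{1_res:operations_in_extended_space}\ref{1_part:operations_in_extended_space_4} (once we observe that $\largest$ \emph{is} an upper bound in $\osext$) gives $\bigvee S=\largest$ in $\osext$.

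Part~\ref{1_part:completeness_properties_of_extended_space_2} is the more delicate dual. Let $S$ be a decreasing sequence (resp.\ a non-empty at most countably infinite subset) in $\osext$ that is bounded from below in $\osext$. If every term of the sequence equals $\largest$ (resp.\ $S=\{\largest\}$), the infimum is trivially $\largest$. Otherwise $S$ contains a finite element $s_0$, and any lower bound $\ell\in\osext$ satisfies $\ell\leq s_0$, which forces $\ell$ to be finite. Consequently the decreasing subsequence (resp.\ the subset) $S\cap\os$ of finite elements is bounded from below by $\ell\in\os$, and the equivalent `decreasing' formulation of $\sigma$-monotone (resp.\ $\sigma$-Dedekind) completeness supplies an infimum for $S\cap\os$ in $\os$. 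By \cref{1_res:operations_in_extended_space}\ref{1_part:operations_in_extended_space_3}, this infimum coincides with the infimum of $S$ in $\osext$.

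Parts~\ref{1_part:completeness_properties_of_extended_space_3} and~\ref{1_part:completeness_properties_of_extended_space_4} are proved by literally the same arguments, with sequences (resp.\ at most countably infinite subsets) replaced by nets (resp.\ arbitrary non-empty subsets), and \smc ness (resp.\ \sDc ness) replaced by \mc ness (resp.\ \Dc ness). The main point to watch will be the step in parts~\ref{1_part:completeness_properties_of_extended_space_2} and~\ref{1_part:completeness_properties_of_extended_space_4} where one upgrades a lower bound in $\osext$ to a finite lower bound: this is the only place where the hypothesis `bounded from below in $\osext$' genuinely enters, and without it one could not invoke the given completeness property on the finite part. Once that observation is made, the remainder is a routine application of \cref{1_res:operations_in_extended_space}.
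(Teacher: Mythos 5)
Your proof is correct. The paper does not actually spell out a proof of this lemma (it is among the verifications the authors describe as tedious but elementary), and your argument\textemdash the dichotomy on the existence of a finite upper bound, the observation that any lower bound of a set containing a finite element is itself finite, and the transfer between $\os$ and $\osext$ via parts~\ref{1_part:operations_in_extended_space_1}, \ref{1_part:operations_in_extended_space_3}, and~\ref{1_part:operations_in_extended_space_4} of \cref{1_res:operations_in_extended_space}\textemdash is exactly the intended one.
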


\section{Monotone complete and normal partially ordered vector spaces}
\label{1_sec:normal_monotone_complete_spaces}

\noindent In \cite{de_jeu_jiang:2021b,de_jeu_jiang:2021c,de_jeu_jiang:2021d}, we shall be concerned with positive operators  $\posmap:\contcts\to\os$ or $\posmap:\contots\to\os$, where $\ts$ is a locally compact Hausdorff space and $\os$ is a partially ordered vector space; in \cite{de_jeu_jiang:2021c}, $\os$ is even a partially ordered algebra. The goal in \cite{de_jeu_jiang:2021b} is to find an $\pososext$-valued measure on the Borel subsets of $\ts$ that represents $\posmap$ via the order integrals of the present paper. Such results will then be applied in  \cite{de_jeu_jiang:2021c,de_jeu_jiang:2021d}. It will become apparent in \cref{1_sec:positive_osext-valued_measure,1_sec:integration_with_respect_to_a_positive_os-valued_measure} that, to be able to define order integrals with respect to $\pososext$-valued measures and develop their theory at all, $\os$ needs to be at least \smc. Furthermore, it will become clear in \cite{de_jeu_jiang:2021b} that, for \smc\ spaces\textemdash so that the order integrals in the aspired representation theorems make sense to begin with\textemdash the most convenient ones for which there actually \emph{is} such a representing theorem, are the spaces that are even \mc\ and that are also normal. The latter notion will be defined below.

This section contains a number of examples of \mc\ normal spaces; see the  \cref{1_res:banach_lattice_with_oc_norm_is_normal_and_monotone_complete,1_res:V_is_normal_and_monotone_complete,1_res:L_sa_is_normal_and_monotone_complete,1_res:JBW_algebra_is_normal_and_monotone_complete,1_res:combination_result_for_normality_and_monotone_completeness}. These form a preparation for the sequels to the current paper, but they may also serve as a motivation for the work on the order integral later in this paper.

The link between our measures in \cref{1_sec:positive_osext-valued_measure} and the usual strongly $\sigma$-additive ones in the context of \cref{1_res:L_sa_is_normal_and_monotone_complete} (resp.\ part~\ref{1_part:combination_result_for_normality_and_monotone_completeness_1} of \cref{1_res:combination_result_for_normality_and_monotone_completeness}) will be established in \cref{1_res:measures_with_values_in_L_sa} (resp.\ \cref{1_res:measures_with_values_in_the_regular_operators}). Apart from this connection, the remainder of the paper is independent of the current section.

\medskip

 We start by recalling some of the usual notions and introducing the notations that we shall use.

If $\os$ and $\ostwo$ are vector spaces, then $\linearop{\os,\ostwo}$ denotes the vector space of operators from $\os$ into $\ostwo$. An operator $T\in\linearop{\os,\ostwo}$ between two partially ordered vector spaces is \emph{order bounded} if it maps order bounded subsets of $\os$ into order bounded subsets of $\ostwo$.  The order bounded operators from $\os$ into $\ostwo$ form a vector space that is denoted by $\oboundedop{\os,\ostwo}$. An operator $T\in\linearop{\os,\ostwo}$ is \emph{positive} if $T(\pos{\os})\subseteq \pos{\ostwo}$, and \emph{regular} if it is the difference of two positive operators. The regular operators from $\os$ into $\ostwo$ form a vector space that is denoted by $\regularop{\os,\ostwo}$. A positive operator is order bounded, so that $\regularop{\os,\ostwo}\subseteq\oboundedop{\os,\ostwo}\subseteq\linearop{\os,\ostwo}$. If $\os$ is directed, then these three vector spaces are all partially ordered via their positive cones $\pos{\regularop{\os,\ostwo}}$. We shall write $\odualos$ for $\oboundedop{\os,\RR}$. If $\os$ is a Banach lattice, then $\odualos$ coincides with the norm dual $\ndualos$ of $\os$.

\medskip

 Order completeness properties of partially ordered vector spaces can be inherited by spaces of operators between them. For example, if $\os$ is a directed partially ordered vector space that has the Riesz decomposition property, and if $\ostwo$ is a \Dc\  vector lattice, then the spaces $\regularop{\os,\ostwo}$ and $\oboundedop{\os,\ostwo}$ coincide and are \Dc\  vector lattices; see \cite[Theorem~1.59]{aliprantis_tourky_CONES_AND_DUALITY:2007}. In par\-ti\-cu\-lar, they are \mc\  partially ordered vector spaces. The latter statement is also a consequence of the following result. We are not aware of a reference for it, even though the type of argument in it is well known; see \cite[proof of Theorem~1.19]{aliprantis_burkinshaw_POSITIVE_OPERATORS_SPRINGER_REPRINT:2006}, for example.

\begin{proposition}\label{1_res:V_is_monotone_complete}
Let $\os$ be a directed partially ordered vector space, let $\ostwo$ be a \mc\  \uppars{resp.\ \smc} partially ordered vector space, and let $V$ be a linear subspace of $\linearop{\os,\ostwo}$ containing $\regularop{\os,\ostwo}$.

Let $\net{T}$ be an increasing net \uppars{resp.\ Let $\seq{T}$ be an increasing sequence} in $V$. Then $\net{T}$ \uppars{resp.\ $\seq{T}$} is bounded from above in $V$ if and only if, for all $x\in\pos{\os}$, $\{T_\lambda x\}_{\lambda\in\Lambda}$ \uppars{resp.\ $\{T_n x\}_{n=1}^\infty$} is bounded from above in $\ostwo$. In this case, $\net{T}$ \uppars{resp.\ $\seq{T}$} has a supremum $T$ in $V$. For $x\in\pos{\os}$, it is given by $Tx=\bigvee_{\lambda\in\Lambda}T_\lambda x$ \uppars{resp.\ $Tx=\sup_{n\geq 1}T_n x$}.

In particular, $V$ is a \mc\  \uppars{resp.\ \smc} partially ordered vector space.
\end{proposition}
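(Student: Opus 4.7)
The plan is to establish the equivalence by proving the nontrivial direction constructively: assuming the pointwise boundedness condition, I will construct the supremum $T$ explicitly, verify it lands in $V$, and then read off the ``in particular'' statement.

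For the easy direction, if $S \in V$ is an upper bound for $\net{T}$, then by definition of the ordering on $V$ via the cone $\pos{\regularop{\os,\ostwo}}$, each $S - T_\lambda$ is a positive operator, so $S x \geq T_\lambda x$ in $\ostwo$ for every $x \in \pos{\os}$, giving the required pointwise boundedness.

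For the converse, suppose $\{T_\lambda x\}_{\lambda\in\Lambda}$ is bounded above in $\ostwo$ for each $x \in \pos{\os}$. Since $\ostwo$ is \mc\ (or \smc\ in the sequence case), the increasing net $\{T_\lambda x\}_{\lambda\in\Lambda}$ has a supremum in $\ostwo$, which I call $Tx$. I then want to extend $T$ from $\pos{\os}$ to all of $\os$ by linearity. Additivity on $\pos{\os}$ follows from \cref{1_res:binary_operations_in_extended_space}\ref{1_part:binary_operations_in_extended_space_2}: if $T_\lambda a \uparrow Ta$ and $T_\lambda b \uparrow Tb$, then $T_\lambda a + T_\lambda b \uparrow Ta + Tb$, and the left-hand side is the increasing net whose supremum defines $T(a+b)$. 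Positive homogeneity on $\pos{\os}$ follows from \cref{1_res:operations_in_extended_space}\ref{1_part:operations_in_extended_space_7}. Because $\os$ is directed, every $x \in \os$ is of the form $x_1 - x_2$ with $x_i \in \pos{\os}$, and the usual argument using the additivity on the cone shows that $Tx := Tx_1 - Tx_2$ is well defined and yields a linear operator $T \in \linearop{\os,\ostwo}$.

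The key remaining point, which I anticipate to be the main obstacle, is to show that $T$ actually lies in $V$ (and not merely in $\linearop{\os,\ostwo}$). Fix any index $\lambda_0$. By construction, $Tx \geq T_{\lambda_0} x$ for every $x \in \pos{\os}$, so $T - T_{\lambda_0}$ maps $\pos{\os}$ into $\pos{\ostwo}$ and is therefore a positive, hence regular, operator. By hypothesis, $\regularop{\os,\ostwo} \subseteq V$, so $T - T_{\lambda_0} \in V$; since $T_{\lambda_0} \in V$, this gives $T = T_{\lambda_0} + (T - T_{\lambda_0}) \in V$. Finally, $T$ is an upper bound for $\net{T}$ in $V$ because each $T - T_\lambda$ is positive, and it is the least upper bound: if $S \in V$ satisfies $S \geq T_\lambda$ for all $\lambda$, then $Sx \geq T_\lambda x$ for all $\lambda$ and all $x \in \pos{\os}$, so $Sx \geq \bigvee_\lambda T_\lambda x = Tx$, which says $S - T$ is a positive operator, i.e.\ $S \geq T$ in $V$. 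The ``in particular'' assertion then follows immediately, since an increasing net in $V$ that is bounded above in $V$ satisfies the pointwise hypothesis by the easy direction, and the proof just given supplies the supremum in $V$; the sequential case is identical with $\Lambda = \NN$.
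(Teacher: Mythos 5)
Your proof is correct and follows essentially the same route as the paper: define $T$ pointwise on $\pos{\os}$ as the supremum of $\{T_\lambda x\}$, verify additivity and positive homogeneity on the cone, extend by directedness, and use $\regularop{\os,\ostwo}\subseteq V$ to conclude $T\in V$. The only (harmless) variations are that you obtain additivity on the cone by citing \cref{1_res:binary_operations_in_extended_space} instead of the paper's direct two-inequality argument, and that you defer the reduction involving $T_{\lambda_0}$ to the end (writing $T=T_{\lambda_0}+(T-T_{\lambda_0})$) rather than normalising to a net of positive operators at the outset.
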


\begin{proof}
We prove the statements for the \mc ness and the \smc ness of $V$ at the same time. Let $\net{T}$ be an increasing net (possibly an increasing sequence) in $V$.

If the net is bounded from above by an element $S$ of $V$, then $T_\lambda x\leq Sx$ for all $x\in\pos{\os}$, so that $\{T_\lambda x\}_{\lambda\in\Lambda}$ is bounded from above in $\ostwo$ for all $x\in\pos{\os}$.

Conversely, suppose that $\{T_\lambda x\}_{\lambda\in\Lambda}$ is bounded from above in $\ostwo$ for all $x\in\pos{\os}$. We shall show that the pointwise formula for $T$ as in the statement of the theorem actually defines an element of $V$. This is then clearly the least upper bound of the net in $V$.

Choose any $\lambda_0\in\Lambda$. Using that the nets $\net{T}$ and $\{T_\lambda x\}_{\lambda\in\Lambda}$ for $x\in\pos{\os}$ are increasing, one easily sees, by considering the net $\{T_\lambda-T_{\lambda_0}\}_{\lambda\geqslant\lambda_0}\subseteq\pos{V}$, that it is sufficient to prove that the pointwise formula defines an element of $V$ when $T_\lambda\geq 0$ for all $\lambda\in\Lambda$.

Supposing, therefore, that $\net{T}\subseteq\pos{V}$, we define $T:\pos{\os}\rightarrow\pos{\ostwo}$ as in the statement of the theorem by
\begin{equation}\label{1_eq:T_definition}
T x\coloneqq\bigvee_{\lambda\in\Lambda} T_\lambda x
\end{equation}
for $x\in \pos{\os}$. Since $\{T_\lambda x\}_{\lambda\in\Lambda}$ is increasing and bounded from above, the supremum in the right hand side of \cref{1_eq:T_definition} exists as a consequence of the pertinent completeness property of $\ostwo$. It is clear that $T(r x)=r T(x)$ for all $r\geq 0$ and $x\in\pos{\os}$. Next we show that $T$ is additive on $\pos{\os}$. Fix $x_1, x_2\in\pos{\os}$. For all $\lambda\in\Lambda$, we have
\[
T_\lambda(x_1+x_2)=T_\lambda(x_1)+T_\lambda(x_2)\leq T x_1+ T x_2,
\]
so $T(x_1+x_2)\leq Tx_1+T x_2$.
For the reverse inequality, consider arbitrary $\lambda_1,\lambda_2\in\Lambda$. Choose $\lambda_3\in\Lambda$ such that $\lambda_3\geq\lambda_1$ and $\lambda_3\geq\lambda_2$. Then, using that $\net{T}$ is increasing, we have
\[
T_{\lambda_1} x_1+T_{\lambda_2} x_2\leq T_{\lambda_3}x_1+T_{\lambda_3}x_2=T_{\lambda_3}(x_1+x_2)\leq T(x_1+x_2),
\]
which easily implies that $T(x_1+x_2)\geq T x_1+ T x_2$. Hence $T$ is additive on $\pos{\os}$.

Next, if $x\in \os$ is arbitrary, we choose $x_1,x_2\in\pos{\os}$ such that $x=x_1-x_2$, and we define $Tx\coloneqq Tx_1-Tx_2$. It is then easy to see that $T$ is well defined and that $T$ is linear, so that $T\in\linearop{\os,\ostwo}$. Since clearly $T\geq 0$, we also have $T\in\regularop{\os,\ostwo}$. Since $\regularop{\os,\ostwo}\subseteq V$, we have $T\in V$, as required.

\end{proof}

The underlying spaces of the monotone ($\sigma$-)partially ordered vector spaces of operators in \cref{1_res:V_is_monotone_complete} are themselves partially ordered vector spaces, but there also exist \mc\  partially ordered vector spaces of operators where this is no longer the case. An important class of examples is provided by the following result, which is a direct consequence of  \cite[Lemma I.6.4]{davidson_C-STAR-ALGEBRAS_BY_EXAMPLE:1996}.

\begin{proposition}\label{1_res:L_sa_is_monotone_complete} Let  $\hilbert$ be a complex Hilbert space, and let $L$ be a  strongly closed complex linear subspace of $\boundedh$.  Let $L_\sa$ be the real vector space that consists of the self-adjoint elements of $L$, supplied with the partial ordering that is inherited from the usual partial ordering of the self-adjoint elements of $\boundedh$. Then $L_\sa$ is a \mc\  partially ordered vector space.

More precisely, if $\net{T}$ is an increasing net in $L_\sa$ that is bounded from above in $L_\sa$, then $\net{T}$ converges in $\boundedh$ with respect to the strong operator topology, its limit $\SOTlim_{\lambda} T_\lambda$ is an element of $L_{\sa}$, and is equal to the supremum $\bigvee\{T_\lambda : \lambda\in\Lambda\}$ of the $T_\lambda$ in $L_{\sa}$.
\end{proposition}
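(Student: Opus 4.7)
My plan is to reduce the claim to the well-known classical fact for the self-adjoint part of $\boundedh$ itself and then use that $L$ is strongly closed. Concretely, let $\net{T}$ be an increasing net in $L_\sa$ that is bounded from above by some $S\in L_\sa$. Viewing $\net{T}$ as a net in the \mc\ partially ordered vector space $\bounded(\hilbert)_\sa$ (this is the content of the cited \cite[Lemma~I.6.4]{davidson_C-STAR-ALGEBRAS_BY_EXAMPLE:1996}), the net is SOT-convergent in $\boundedh$ to an operator $T\in\bounded(\hilbert)_\sa$ that is the supremum of $\net{T}$ in $\bounded(\hilbert)_\sa$.

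Next, I would check that $T$ actually lies in $L_\sa$. Since each $T_\lambda$ belongs to $L$ and $L$ is strongly closed, the SOT-limit $T=\SOTlim_\lambda T_\lambda$ is again in $L$. Self-adjointness of $T$ is already part of the previous step (alternatively, SOT-convergence implies WOT-convergence, and the adjoint operation is WOT-continuous on bounded sets, so $T^\ast=T$). Hence $T\in L_\sa$.

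Finally, I would verify that this $T$ is the supremum of $\net{T}$ not merely in $\bounded(\hilbert)_\sa$ but also in the sub-partially ordered vector space $L_\sa$. Since the partial order on $L_\sa$ is by definition the one inherited from $\bounded(\hilbert)_\sa$, the inequalities $T_\lambda\leq T$ for all $\lambda\in\Lambda$ remain valid in $L_\sa$, so $T$ is an upper bound for $\net{T}$ in $L_\sa$. If $S'\in L_\sa$ is any upper bound of $\net{T}$ in $L_\sa$, then $S'$ is also an upper bound in $\bounded(\hilbert)_\sa$, and hence $T\leq S'$ there; since this inequality involves only elements of $L_\sa$, it holds in $L_\sa$ as well. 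Thus $T=\bigvee\{T_\lambda:\lambda\in\Lambda\}$ in $L_\sa$, proving both the monotone completeness of $L_\sa$ and the more precise SOT-description of the supremum.

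I do not anticipate a substantial obstacle: the only real input is the classical strong-operator convergence theorem for bounded increasing nets of self-adjoint operators on $\hilbert$, which is precisely what the reference supplies. The remaining work is the essentially formal observation that strong closedness of $L$ keeps the limit inside $L_\sa$ and that the supremum in an ordered subspace coincides with the one in the ambient space when the latter happens to lie in the subspace.
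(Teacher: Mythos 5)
Your proposal is correct and follows exactly the route the paper intends: the paper gives no separate proof but declares the proposition a direct consequence of \cite[Lemma~I.6.4]{davidson_C-STAR-ALGEBRAS_BY_EXAMPLE:1996}, which is precisely the classical SOT-convergence fact you invoke, after which strong closedness of $L$ and the coincidence of suprema in $L_\sa$ and $\boundedh_\sa$ finish the argument as you describe.
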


\begin{remark}\label{1_rem:kadison}
In view of Kadison's anti-lattice theorem \cite[Theorem~6]{kadison:1951}, the space $L_\sa$ figuring in \cref{1_res:L_sa_is_monotone_complete} will not generally be a vector lattice. For example, if $\dim \hilbert\geq 2$, then, by Kadison's result, $\boundedh_{\sa}$ is not a vector lattice. The space $\boundedh_\sa$ for $\dim \hilbert\geq 2$ also provides an example of a \mc\  partially ordered vector space that is not \Dc. It is, in fact, not even \sDc. To see this, suppose that it is \sDc. Since $\boundedh_\sa$ is directed, every subset $\{T_1,T_2\}$ is bounded from above, so that it then has a supremum. Thus $\boundedh_\sa$ is a vector lattice, but we know this not to be the case.
\end{remark}

Now that the monotone completeness of the above spaces of operators has been established, we turn to normality. As a preparation, we start with the following definition.

\begin{definition}\label{1_def:order_continuity} Let $\os$ and $\ostwo$ be partially ordered vector spaces, and let $T:\os\to \ostwo$ be a positive operator. Then $T$ is called \emph{order continuous} (resp.\ \emph{$\sigma$-order continuous}) if $Tx_\lambda\downarrow 0$ in $\ostwo$ whenever $x_\lambda\downarrow 0$ in $\os$ (resp.\ if $Tx_n\downarrow 0$ in $\ostwo$  whenever $x_n\downarrow 0$ in $\os$). One can, equivalently, require that  $Tx_\lambda\uparrow Tx$ in $\ostwo$ whenever $0\leq x_\lambda\uparrow x$ in $\os$ (resp.\ that $Tx_n\uparrow Tx$ in $\ostwo$  whenever $0\leq x_n\uparrow x$ in $\os$).  A general  operator in $\regularop{\os,\ostwo}$ is order continuous (resp.\ $\sigma$-order continuous) if it is the difference of two positive order continuous operators. We shall write $\ocontop{\os,\ostwo}$ (resp.\ $\socontop{\os,\ostwo}$) for the order continuous (resp.\ $\sigma$-order continuous) operators from $\os$ into $\ostwo$.
\end{definition}

It is easy to see that the sum of two positive order continuous (resp.\ $\sigma$-order continuous) operators is again a positive order continuous (resp.\ positive $\sigma$-order continuous) operator, and it follows that $\ocontop{\os,\ostwo}$ (resp.\ $\socontop{\os,\ostwo}$) is a linear subspace of $\regularop{\os,\ostwo}$. If $\os$ is directed, then it is a partially ordered vector space with the positive order continuous operators (resp.\ the positive  $\sigma$-order continuous operators) as its positive cone, which is generating by definition.
We shall write $\ocdualos$ for $\ocontop{\os,\RR}$ and $\socdualos$ for $\socontop{\os,\RR}$. It is the space $\ocdualos$ that will be of help in the context of Riesz representation theorems for vector-valued positive maps. It has $\pos{(\ocdualos)}$, the positive order continuous functionals, as its generating positive cone.

\begin{remark}\label{1_rem:order_continuity_agrees}
If $\os$ and $\ostwo$ are vector lattices, where $\ostwo$ is \Dc, then the above notion of order continuous operators agrees with the usual one in the literature. To see this, recall that $T:\os\to \ostwo$ is order continuous in the sense of \cite[p.~123]{zaanen_RIESZ_SPACES_VOLUME_II:1983} if $\abs{Tx_\lambda}\downarrow 0$ in $\ostwo$ whenever $x_\lambda\downarrow 0$ in $\os$. Hence a positive $T$ is order continuous in the sense of \cite[p.~123]{zaanen_RIESZ_SPACES_VOLUME_II:1983} precisely when it is order continuous in the sense of our \cref{1_def:order_continuity}. Furthermore, by \cite[Lemma~84.1]{zaanen_RIESZ_SPACES_VOLUME_II:1983}, $T: \os\to \ostwo$ is order continuous in the sense of \cite[p.~123]{zaanen_RIESZ_SPACES_VOLUME_II:1983} if and only if $\pos{T}$ and $\negt{T}$ are order continuous in the sense of that same definition, i.e.,  if and only if they are positive order continuous operators in the sense of \cref{1_def:order_continuity}. In addition, \cite[Theorem~84.2]{zaanen_RIESZ_SPACES_VOLUME_II:1983} implies that the set of all $T: \os\to \ostwo$ that are order continuous in the sense of \cite[p.~123]{zaanen_RIESZ_SPACES_VOLUME_II:1983} form a vector space. It follows that the notions of (and notations for) order continuous operators coincide if $\os$ and $\ostwo$ are vector lattices, where $\ostwo$ is \Dc. A similar argument shows that this is also the case for $\sigma$-order continuous operators.
\end{remark}

\begin{definition}\label{1_def:normal_space}
Let $\os$ be a partially ordered vector space. Then $\os$ is called \emph{normal} when, for $x\in \os$, $\f{x,x^\prime}\geq 0$ for all $x^\prime\in\pos{(\ocdualos)}$ if and only if $x\in \pos{\os}$. We say that $\os$ is \emph{$\sigma$-normal} when, for $x\in \os$, $\f{x,x^\prime}\geq 0$ for all $x^\prime\in\pos{(\socdualos)}$ if and only if $x\in \pos{\os}$.
\end{definition}

Clearly, if $\os$ is ($\sigma$-)normal, then $\pos{(\ocdualos)}$ separates the points of $\os$.

As with order continuous operators, if $\os$ is a vector lattice, then our notion of normality coincides with that in the literature. To see this, we include the following result.

\begin{lemma}\label{1_res:separating_properties}
Let $\os$ be a vector lattice, and let $A$ be an order ideal of $\odualos$. Then the following are equivalent:
\begin{enumerate}
\item\label{1_part:separating_properties_1}
$\pos{A}$ separates the points of $\os$;
\item\label{1_part:separating_properties_2}
$A$ separates the points of $\os$;
\item\label{1_part:separating_properties_3}
For $x\in \os$, $\f{x,x^\prime}\geq 0$ for all $x^\prime\in \pos{A}$ if and only if $x\in \pos{\os}$.
\end{enumerate}
\end{lemma}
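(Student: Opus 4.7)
The implications form a small cycle. I would prove $(1)\Rightarrow(2)\Rightarrow(1)$, then $(3)\Rightarrow(1)$, and finally $(1)\Rightarrow(3)$; the last of these is the only one that requires real work.

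The implication $(1)\Rightarrow(2)$ is immediate from $\pos{A}\subseteq A$. For $(2)\Rightarrow(1)$, I would use that $\odualos$ is a \Dc\ vector lattice (a classical Riesz--Kantorovich fact for vector lattice $\os$), so an order ideal $A$ is a sublattice: if $\phi\in A$, then $\pos{\phi},\negt{\phi}\in A$. If $x\neq 0$, choose $\phi\in A$ with $\f{x,\phi}\neq 0$; then $\phi(x)=\pos{\phi}(x)-\negt{\phi}(x)\neq 0$ forces one of $\pos{\phi}(x)$, $\negt{\phi}(x)$ to be non-zero, producing a functional in $\pos{A}$ that does not vanish on $x$. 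For $(3)\Rightarrow(1)$, note that for $x\neq 0$ at least one of $x,-x$ fails to lie in $\pos{\os}$ (since $\pos{\os}$ is proper), so (3) yields a $\phi\in\pos{A}$ with $\phi(x)\neq 0$.

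The substantive implication is $(1)\Rightarrow(3)$. The ``if'' direction is obvious. For the converse, suppose $x\in \os$ satisfies $\f{x,\phi}\geq 0$ for all $\phi\in\pos{A}$, and assume towards a contradiction that $x\notin\pos{\os}$, so that $y\coloneqq \negt{x}=\pos{(-x)}>0$. By (1), there exists $\phi\in\pos{A}$ with $\phi(y)>0$. My plan is to manufacture from $\phi$ a new functional $\psi\in\pos{A}$ that is ``concentrated on the band generated by $y$''; specifically, one with $\psi(\pos{x})=0$ and $\psi(y)=\phi(y)>0$, which would give $\psi(x)=\psi(\pos{x})-\psi(\negt{x})=-\phi(y)<0$, contradicting the hypothesis on $x$.

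The main obstacle is the construction of $\psi$. I would define, for $u\in\pos{\os}$,
\[
\psi(u)\coloneqq\sup_{n\in\NN}\phi(u\wedge ny),
\]
which exists in $\RR$ because the sequence is increasing and dominated by $\phi(u)$. I would then verify that $\psi$ is positively homogeneous and additive on $\pos{\os}$ (additivity reduces to showing that $(u_1+u_2)\wedge ny$ is asymptotically close to $(u_1\wedge ny)+(u_2\wedge ny)$, which follows from a standard Riesz-space computation using that $u_i\wedge ny\uparrow$), so that it extends to an element $\psi\in\pos{(\odualos)}$. By construction $0\leq\psi\leq\phi$, and since $A$ is an order ideal of $\odualos$, we conclude $\psi\in\pos{A}$. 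The crucial properties $\psi(\pos{x})=0$ and $\psi(y)=\phi(y)$ follow from $\pos{x}\wedge y=0$ (so $\pos{x}\wedge ny=0$ for every $n$) and $y\wedge ny=y$ for $n\geq 1$, respectively. This $\psi$ delivers the required contradiction.
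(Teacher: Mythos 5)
Your proof is correct, but the substantive implication is handled by a genuinely different route than the paper's. The paper closes the cycle as \ref{1_part:separating_properties_1}$\Rightarrow$\ref{1_part:separating_properties_2}$\Rightarrow$\ref{1_part:separating_properties_3}$\Rightarrow$\ref{1_part:separating_properties_1}, with all the work in \ref{1_part:separating_properties_2}$\Rightarrow$\ref{1_part:separating_properties_3}: given $x$ with $\f{x,x^\prime}\geq 0$ for all $x^\prime\in\pos{A}$, it invokes the Riesz--Kantorovich-type formula of \cite[Theorem~1.23]{aliprantis_burkinshaw_POSITIVE_OPERATORS_SPRINGER_REPRINT:2006}, which produces, for each $x^\prime\in\pos{A}$, some $y^\prime$ with $0\leq y^\prime\leq x^\prime$ and $\f{\negt{x},x^\prime}=-\f{x,y^\prime}$; solidity of $A$ puts $y^\prime$ in $\pos{A}$, whence $\f{\negt{x},x^\prime}=0$ for all $x^\prime\in A$ and $\negt{x}=0$ by separation. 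You instead prove \ref{1_part:separating_properties_1}$\Rightarrow$\ref{1_part:separating_properties_3} directly by explicitly constructing the component $\psi(u)=\sup_n\phi(u\wedge n\negt{x})$ of $\phi$ in the band generated by $\negt{x}$ — in effect you build by hand a functional realising the supremum that the paper imports as a black box. Your approach is more self-contained (it needs only the Kantorovich extension of an additive map on the cone and the elementary lattice inequalities $(u_1+u_2)\wedge ny\leq u_1\wedge ny+u_2\wedge ny\leq(u_1+u_2)\wedge 2ny$), at the cost of being longer and of requiring the extra step \ref{1_part:separating_properties_2}$\Rightarrow$\ref{1_part:separating_properties_1}, for which you correctly use that $A$, being an ideal of the \Dc\ vector lattice $\odualos$, is a sublattice, so that $\pos{\phi},\negt{\phi}\in\pos{A}$. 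The paper's argument is shorter and gets \ref{1_part:separating_properties_2}$\Rightarrow$\ref{1_part:separating_properties_3} (a formally stronger implication, since it starts from the weaker separation hypothesis) in one stroke from the cited theorem. Both are valid; all the individual steps in your write-up check out.
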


If $A=\odualos$, then the fact that part~\ref{1_part:separating_properties_2} implies part~\ref{1_part:separating_properties_3} can be found as \cite[Theorem~1.66]{aliprantis_burkinshaw_POSITIVE_OPERATORS_SPRINGER_REPRINT:2006}). The following proof is an adaptation of the proof for that case.

\begin{proof}
It is clear that part~\ref{1_part:separating_properties_1} implies part~\ref{1_part:separating_properties_2}, and also that part~\ref{1_part:separating_properties_3} implies part~\ref{1_part:separating_properties_1}.
It remains to be shown that part~\ref{1_part:separating_properties_2} implies part~\ref{1_part:separating_properties_3}.

One implication in part~\ref{1_part:separating_properties_3} is trivial, so we turn to the non-trivial one.
Let $x\in \os$ be such that $\f{x,x^\prime} \geq 0$ for all $x^\prime\in \pos{A}$. If $x^\prime\in\pos{A}$, then \cite[Theorem~1.23]{aliprantis_burkinshaw_POSITIVE_OPERATORS_SPRINGER_REPRINT:2006} shows that there exists $y^\prime\in\odualos$ such that $0\leq y^\prime\leq x^\prime$ and
\[
\f{\negt{x},x^\prime}=-\f{x,y^\prime},
\]
Since $A$ is an order ideal, we have $y^\prime \in\pos{A}$. Hence $\f{x,y^\prime}\geq 0$ by assumption, and we see that $\f{\negt{x},x^\prime}\leq 0$. On the other hand, it is clear that $\f{\negt{x},x^\prime}\geq 0$. We conclude that $\f{\negt{x},x^\prime}=0$ for all $x^\prime\in\pos{A}$, so that $\f{\negt{x},x^\prime}=0$ for all $x^\prime\in A $. Then $\negt{x}=0$ by the separation property of~$A$, and therefore $x\in \pos{\os}$.
\end{proof}

As a consequence, a vector lattice is normal in the sense of our \cref{1_def:normal_space} if and only if $\ocdualos$ separates the points of $\os$, i.e.,  if and only if it is normal in the sense of \cite[p.~21]{abramovich_aliprantis_INVITATION_TO_OPERATOR_THEORY:2002}. For vector lattices, therefore, our notion of normality coincides with the one in the literature.

\medskip

As will become clear in \cite{de_jeu_jiang:2021b}, the importance of normality for our work on Riesz representation theorems lies in the following observation.

\begin{proposition}\label{1_res:inf_and_sup_via_order_dual}
Let $\os$ be a normal partially ordered vector space. Suppose that $\net{x}$ is a net in $\os$, and that $x\in\os$.
\begin{enumerate}
\item\label{1_part:inf_and_sup_via_order_dual_1}
If $x_\lambda\downarrow$, then $x_\lambda\downarrow x$ if and only if $\f{x,x^\prime}=\inf_{\lambda\in\Lambda }\f{x_\lambda,x^\prime}$ for all $x^\prime\in\pos{(\ocdualos)}$.
\item\label{1_part:inf_and_sup_via_order_dual_2}
If $x_\lambda\uparrow$, then $x_\lambda\uparrow x$ if and only if $\f{x,x^\prime}=\sup_{\lambda\in\Lambda }\f{x_\lambda,x^\prime}$ for all $x^\prime\in\pos{(\ocdualos)}$.
\end{enumerate}
\end{proposition}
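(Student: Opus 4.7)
The plan is to prove both implications in part \ref{1_part:inf_and_sup_via_order_dual_1} by pairing order continuity of positive functionals in $\pos{(\ocdualos)}$ (for the forward direction) with normality of $\os$ (for the reverse direction), and then to deduce part \ref{1_part:inf_and_sup_via_order_dual_2} from part \ref{1_part:inf_and_sup_via_order_dual_1} by negation, using that $x_\lambda\uparrow x$ in $\os$ if and only if $-x_\lambda\downarrow -x$ in $\os$ and that $\sup_{\lambda}\f{x_\lambda,x^\prime}=-\inf_{\lambda}\f{-x_\lambda,x^\prime}$.

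For the forward direction of part \ref{1_part:inf_and_sup_via_order_dual_1}, suppose $x_\lambda\downarrow x$ and fix $x^\prime\in\pos{(\ocdualos)}$. Then $(x_\lambda-x)\downarrow 0$ in $\os$, so the order continuity of the positive functional $x^\prime$ gives $\f{x_\lambda-x,x^\prime}\downarrow 0$, i.e., $\f{x_\lambda,x^\prime}\downarrow \f{x,x^\prime}$. In particular, $\inf_{\lambda}\f{x_\lambda,x^\prime}=\f{x,x^\prime}$.

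For the reverse direction of part \ref{1_part:inf_and_sup_via_order_dual_1}, assume that $\f{x,x^\prime}=\inf_{\lambda}\f{x_\lambda,x^\prime}$ for every $x^\prime\in\pos{(\ocdualos)}$. To show that $x$ is a lower bound for $\net{x}$, fix $\lambda_0\in\Lambda$; for any $x^\prime\in\pos{(\ocdualos)}$, positivity gives that $\{\f{x_\lambda,x^\prime}\}_{\lambda\in\Lambda}$ is a decreasing net in $\RR$, so $\f{x,x^\prime}=\inf_{\lambda}\f{x_\lambda,x^\prime}\leq\f{x_{\lambda_0},x^\prime}$, whence $\f{x_{\lambda_0}-x,x^\prime}\geq 0$. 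As this holds for all $x^\prime\in\pos{(\ocdualos)}$, normality of $\os$ yields $x\leq x_{\lambda_0}$. To show that $x$ is the \emph{greatest} lower bound, let $y\in\os$ satisfy $y\leq x_\lambda$ for all $\lambda$. For any $x^\prime\in\pos{(\ocdualos)}$, we then have $\f{y,x^\prime}\leq\f{x_\lambda,x^\prime}$ for all $\lambda$, hence $\f{y,x^\prime}\leq\inf_{\lambda}\f{x_\lambda,x^\prime}=\f{x,x^\prime}$, i.e., $\f{x-y,x^\prime}\geq 0$. Normality again gives $y\leq x$, so $x_\lambda\downarrow x$.

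I expect no genuine obstacle here: normality is precisely designed to invert the passage from $\os$ to $\pos{(\ocdualos)}$, while order continuity is the property that lets suprema and infima pass through the pairing. The only small care needed is to check that the two natural directions use the two distinct properties (order continuity forward, normality backward), and to observe that the decreasing character of $\{\f{x_\lambda,x^\prime}\}_{\lambda\in\Lambda}$ in $\RR$ follows automatically from $x_\lambda\downarrow$ together with $x^\prime\geq 0$, so that the infimum on the right-hand side is legitimate in the statement.
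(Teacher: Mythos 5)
Your proof of part~\ref{1_part:inf_and_sup_via_order_dual_1} is correct and coincides with the paper's own argument: order continuity of $x^\prime$ for the forward implication, and two applications of normality (once to show $x$ is a lower bound, once to show it dominates any other lower bound) for the converse. The paper likewise deduces part~\ref{1_part:inf_and_sup_via_order_dual_2} from part~\ref{1_part:inf_and_sup_via_order_dual_1} (it merely omits the routine negation argument that you spell out), so there is nothing to add.
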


\begin{proof}
We prove part~\ref{1_part:inf_and_sup_via_order_dual_1} where $x_\lambda\downarrow$; part~\ref{1_part:inf_and_sup_via_order_dual_2} follows from this.

If $x_\lambda\downarrow x$, so that $x_\lambda -x\downarrow 0$, and if $x^\prime\in\pos{(\ocdualos)}$, then, by definition, $\f{x_\lambda -x,x^\prime}\downarrow 0$. Hence $\f{x_\lambda,x^\prime}\downarrow\f{x,x^\prime}$, so that, in particular,  $\f{x,x^\prime}=\inf_{\lambda\in\Lambda }\f{x_\lambda,x^\prime}$.

Conversely, suppose that $\f{x,x^\prime}=\inf_{\lambda\in\Lambda }\f{x_\lambda,x^\prime}$ for all $x^\prime\in\pos{(\ocdualos)}$.

With $\lambda\in\Lambda$ fixed, we then have $\f{x,x^\prime}\leq\f{x_\lambda,x^\prime}$, or $\f{x-x_\lambda,x^\prime}\leq 0$, for all $x^\prime\in\pos{(\ocdualos)}$. Since $\os$ is normal, we conclude that $x\leq x_\lambda$. Hence $x$ is a  lower bound of $\{x_\lambda : \lambda\in\Lambda\}$.

If $\tilde x$ is a lower bound of $\{x_\lambda : \lambda\in\Lambda\}$, and if $x^\prime\in \pos{(\ocdualos)}$ is fixed, then certainly $\f{\tilde x,x^\prime}\leq\f{x_\lambda,x^\prime}$ for all $\lambda\in\Lambda$. Hence $\f{\tilde x,x^\prime}\leq\inf_{\lambda\in\Lambda }\f{x_\lambda,x^\prime}$. Since the right hand side of this inequality equals $\f{x,x^\prime}$ by assumption, we have $\f{\tilde x,x^\prime}\leq \f{x,x^\prime}$ for all $x^\prime\in\pos{(\ocdualos)}$. Since $\os$ is normal, we see that $\tilde x\leq x$.

We conclude that $x=\bigwedge \{x_\lambda :\lambda\in\Lambda\}$.  Hence $x_\lambda\downarrow x$.
\end{proof}

A similar proof establishes the following.

\begin{proposition}\label{1_res:the_sigma_property_for_order dual}
	Let $\os$ be a $\sigma$-normal partially ordered vector space. Suppose that $\seq{x}$ is a sequence in $\os$, and that $x\in\os$.
	\begin{enumerate}
		\item\label{1_part:the_sigma_property_for_order dual_1}
		If $x_n\downarrow$, then $x_n\downarrow x$ if and only if $\f{x,x^\prime}=\inf_{n\geq 1}\f{x_n,x^\prime}$ for all $x^\prime\in\pos{(\socdualos)}$.\label{1_part:sigma_inf}
		\item\label{1_part:the_sigma_property_for_order dual_2}
		If $x_n\uparrow$, then $x_n\uparrow x$ if and only if $\f{x,x^\prime}=\sup_{n\geq 1}\f{x_n,x^\prime}$ for all $x^\prime\in\pos{(\socdualos)}$.\label{1_part:sigma_sup}
	\end{enumerate}
\end{proposition}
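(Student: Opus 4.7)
The plan is to follow the argument for \cref{1_res:inf_and_sup_via_order_dual} essentially verbatim, substituting sequences for nets, $\sigma$-order continuous functionals for order continuous ones, and $\sigma$-normality for normality. As before, I would prove part~\ref{1_part:the_sigma_property_for_order dual_1} directly and obtain part~\ref{1_part:the_sigma_property_for_order dual_2} by passing to negatives; the fact that $x_n\uparrow x$ in $\os$ if and only if $-x_n\downarrow -x$ in $\os$, together with the linearity of the pairing, turns any supremum statement into an infimum statement.

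For the forward direction of part~\ref{1_part:the_sigma_property_for_order dual_1}, assume $x_n\downarrow x$ and fix $x^\prime\in\pos{(\socdualos)}$. Then $x_n - x\downarrow 0$ in $\os$, so by the very definition of $\sigma$-order continuity in \cref{1_def:order_continuity} applied to $x^\prime$ (viewed as a positive operator to $\RR$), we have $\f{x_n-x,x^\prime}\downarrow 0$ in $\RR$; linearity of $x^\prime$ then yields $\f{x_n,x^\prime}\downarrow\f{x,x^\prime}$, and in particular $\f{x,x^\prime}=\inf_{n\geq 1}\f{x_n,x^\prime}$.

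For the reverse direction, suppose $\f{x,x^\prime}=\inf_{n\geq 1}\f{x_n,x^\prime}$ for every $x^\prime\in\pos{(\socdualos)}$. Fixing $n$, this gives $\f{x-x_n,x^\prime}\leq 0$ for all such $x^\prime$. Since $\os$ is $\sigma$-normal, \cref{1_def:normal_space} (applied to $-(x-x_n)$) forces $x\leq x_n$, so $x$ is a lower bound for $\seq{x}$. Now let $\tilde x\in\os$ be any lower bound. For any fixed $x^\prime\in\pos{(\socdualos)}$ we have $\f{\tilde x,x^\prime}\leq\f{x_n,x^\prime}$ for every $n$, whence $\f{\tilde x,x^\prime}\leq\inf_{n\geq 1}\f{x_n,x^\prime}=\f{x,x^\prime}$; another application of $\sigma$-normality yields $\tilde x\leq x$. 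Hence $x=\bigwedge\{x_n:n\in\NN\}$, i.e.\ $x_n\downarrow x$.

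There is no real obstacle here: the only subtle point\textemdash and the only place where the proof diverges at all from its net counterpart\textemdash is the justification that the pairings with $\sigma$-order continuous functionals behave well for the decreasing \emph{sequence} $x_n-x$. This is however precisely the sequential version built into \cref{1_def:order_continuity}, so the translation is immediate. Part~\ref{1_part:the_sigma_property_for_order dual_2} then follows by replacing $x_n$ with $-x_n$ and $x$ with $-x$, as noted above.
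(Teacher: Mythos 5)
Your proposal is correct and matches the paper's intent exactly: the paper itself only remarks that ``a similar proof establishes the following,'' referring to the argument for \cref{1_res:inf_and_sup_via_order_dual}, and your sequential translation\textemdash using the sequential clause of \cref{1_def:order_continuity} for the forward direction and $\sigma$-normality twice for the reverse\textemdash is precisely that argument. Nothing further is needed.
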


As mentioned in the introduction of this section, the partially ordered vector spaces that are both \mc\ normal are a convenient context for Riesz representation theorems in terms of order integrals. We now include a few examples.

\begin{proposition}\label{1_res:banach_lattice_with_oc_norm_is_normal_and_monotone_complete}
A Banach lattice with an order continuous norm is a \mc\ and normal partially ordered vector space.
\end{proposition}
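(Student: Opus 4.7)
The plan is to establish monotone completeness and normality separately, relying on two classical facts about Banach lattices with order continuous norm. For monotone completeness, I would invoke the well-known result that such a space is \Dc; indeed, if $\net{x}$ is an increasing net in $\posos$ that is bounded above, the order continuity of the norm forces $\net{x}$ to be norm-Cauchy, whence it converges in norm to its supremum. In particular, $\os$ is then \mc.

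For normality, the key intermediate step is the identification $\ocdualos=\ndualos$. Once this is in hand, the Hahn--Banach theorem yields that $\ndualos$ separates the points of $\os$, so $\ocdualos$ does the same; since $\ocdualos$ is an order ideal (in fact a band) of $\odualos$, \cref{1_res:separating_properties}, applied with $A=\ocdualos$, delivers exactly the property demanded in \cref{1_def:normal_space}.

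The inclusion $\ocdualos\subseteq\ndualos$ is essentially automatic: by \cref{1_def:order_continuity}, every element of $\ocdualos$ is a difference of two positive order continuous functionals; each such functional is order bounded; and order bounded functionals on a Banach lattice are norm continuous, so $\ocdualos\subseteq\odualos=\ndualos$. For the reverse inclusion, given $f\in\ndualos=\odualos$, I would use the Jordan decomposition $f=\pos f-\negt f$ in the \Dc\ vector lattice $\odualos$ and check that both $\pos f$ and $\negt f$ lie in $\ocdualos$. Since these are positive, it suffices to verify that whenever $x_\lambda\downarrow 0$ in $\os$, the net $\{\pos f(x_\lambda)\}_{\lambda\in\Lambda}$ decreases to $0$ in $\RR$: the order continuity of the norm gives $\norm{x_\lambda}\to 0$, the norm continuity of $\pos f$ then forces $\pos f(x_\lambda)\to 0$, and since this real net is decreasing and non-negative, it must in fact decrease to $0$.

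The main obstacle is the identification $\ocdualos=\ndualos$; once it is in place, the conclusion is a routine combination of the Hahn--Banach theorem with the preparatory \cref{1_res:separating_properties}.
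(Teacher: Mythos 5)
Your argument is correct and follows essentially the same route as the paper's own proof: Dedekind completeness from the order continuity of the norm gives monotone completeness, and normality comes from the identification $\ocdualos=\ndualos$ together with Hahn--Banach and \cref{1_res:separating_properties}. The only difference is that you spell out the verification of $\ocdualos=\ndualos$, which the paper dismisses as following ``easily'' from $\odualos=\ndualos$ and order continuity of the norm; your details are accurate.
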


\begin{proof}
	Let $\os$ be a Banach lattice with an order continuous norm. Then  $\os$ is \Dc;  see \cite[Corollary~4.10]{aliprantis_burkinshaw_POSITIVE_OPERATORS_SPRINGER_REPRINT:2006}, for example.. Hence $\os$ is certainly \mc. It follows easily from the fact that $\odualos=\ndualos$ and the order continuity of the norm that $\odualos=\ocdualos$. Hence $\ocdualos=\ndualos$. Since $\ndualos$ separates the points of $\os$, \cref{1_res:separating_properties} shows that $\os$ is normal.
\end{proof}

\begin{proposition}\label{1_res:V_is_normal_and_monotone_complete}
Let $\os$ be a directed partially ordered vector space, let $\ostwo$ be a \mc\ and normal partially ordered vector space, and let $V$ be a linear subspace of $\linearop{\os,\ostwo}$ that contains $\regularop{\os,\ostwo}$.

Then $V$ is a \mc\ and normal partially ordered vector space.
\end{proposition}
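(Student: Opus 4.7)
The plan is to split the proposition into its two claims. The monotone completeness of $V$ is immediate from \cref{1_res:V_is_monotone_complete}, so the real content is the normality of $V$. One implication in the definition of normality is trivial: if $T\in\pos{V}$, then $\f{T,\tau}\geq 0$ for every $\tau\in\pos{(\ocdualos[V])}$ by positivity. For the converse, I suppose that $T\in V$ satisfies $\f{T,\tau}\geq 0$ for every $\tau\in\pos{(\ocdualos[V])}$, and I aim to show that $Tx\in\pos{\ostwo}$ for every $x\in\pos{\os}$; since $\ostwo$ is normal, this reduces to showing that $\f{Tx,y^\prime}\geq 0$ for every $x\in\pos{\os}$ and every $y^\prime\in\pos{(\ocdualos[\ostwo])}$.

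To exploit the hypothesis on $T$, I introduce, for each such pair $(x,y^\prime)$, the evaluation map $\phi_{x,y^\prime}\colon V\to\RR$ defined by
\[
\phi_{x,y^\prime}(S)\coloneqq\f{Sx,y^\prime}\quad(S\in V).
\]
This $\phi_{x,y^\prime}$ is clearly linear and positive. The key step is to verify that $\phi_{x,y^\prime}$ is order continuous, so that $\phi_{x,y^\prime}\in\pos{(\ocdualos[V])}$ and the hypothesis on $T$ yields $\f{Tx,y^\prime}=\phi_{x,y^\prime}(T)\geq 0$, as required. This is the main obstacle of the proof, since one has to translate a decreasing-to-zero statement about operators in $V$ into a decreasing-to-zero statement about their values at the fixed point $x\in\pos{\os}$.

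For this step, suppose that $\net{S}$ is a net in $V$ with $S_\lambda\downarrow 0$. Then each $S_\lambda\geq 0$, so $-S_\lambda\uparrow$ is bounded above by $0$ in $V$. By \cref{1_res:V_is_monotone_complete}, the supremum of $\{-S_\lambda\}$ exists in $V$ and is computed pointwise on $\pos{\os}$; since that supremum equals $0$, I obtain $-S_\lambda x\uparrow 0$ in $\ostwo$, i.e., $S_\lambda x\downarrow 0$ in $\ostwo$, for every $x\in\pos{\os}$. Applying the positive order continuous functional $y^\prime$ then gives $\f{S_\lambda x,y^\prime}\downarrow 0$ in $\RR$, which is precisely $\phi_{x,y^\prime}(S_\lambda)\downarrow 0$. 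Hence $\phi_{x,y^\prime}$ is order continuous, and the argument sketched in the previous paragraph completes the proof.
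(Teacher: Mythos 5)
Your proof is correct and follows essentially the same route as the paper: monotone completeness is delegated to the earlier proposition on spaces of operators, and normality is obtained by showing that each evaluation functional $S\mapsto\f{Sx,x^\prime}$ for $x\in\pos{\os}$ and $x^\prime\in\pos{(\ocdual{\ostwo})}$ is a positive order continuous functional on $V$, using that suprema and infima in $V$ are computed pointwise on $\pos{\os}$, and then invoking the normality of $\ostwo$. The only remarks are cosmetic: the notation $\ocdualos[V]$ is not one of the paper's macros (write $\ocdual{V}$), and the reduction via normality of $\ostwo$ is stated in the contrapositive direction of the paper's phrasing, which changes nothing.
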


\begin{proof}
\cref{1_res:V_is_monotone_complete} shows that $V$ is \mc. To prove that it is normal, we define, for $x\in\pos{\os}$ and $x^\prime\in\pos{(\ocdual{\ostwo})}$, the functional $\varphi_{x,x^\prime}:V\to\RR$ by setting $\f{T,\varphi_{x,x^\prime}}\coloneqq\f{Tx,x^\prime}$ for $T\in V$. Then $\varphi_{x,x^\prime}$ is evidently positive. We claim that it is order continuous. To see this, fix $x\in\os$ and $x^\prime\in\pos{(\ocdual{\ostwo})}$, and suppose that $T_\lambda\downarrow 0$ in $V$. By \cref{1_res:V_is_monotone_complete}, this implies that $T_\lambda x\downarrow 0$. Since $x^\prime$ is order continuous, it then follows that $\f{T_\lambda x,x^\prime}\downarrow 0$. Hence $\varphi_{x,x^\prime}$ is  order continuous, and we conclude that $\varphi_{x,x^\prime}\in\pos{(\ocdual{V})}$. Finally, suppose that $T\in V$ is such that $\f{T,\varphi_{x,x^\prime}}\geq 0$ for all $x\in\pos{\os}$ and $x^\prime\in\pos{(\ocdual{\ostwo})}$. Since $\ostwo$ is normal, this implies that $Tx\geq 0$ for all $x\in\pos{\os}$. Hence $T\geq 0$. This proves that $V$ is normal.
\end{proof}

The following example is a continuation of \cref{1_res:L_sa_is_monotone_complete}.

\begin{proposition}\label{1_res:L_sa_is_normal_and_monotone_complete} Let  $\hilbert$ be a complex Hilbert space, and let $L$ be a strongly closed complex linear subspace of $\boundedh$.  Let $L_\sa$ be the real vector space that consists of the self-adjoint elements of $L$, supplied with the partial ordering that is inherited from the usual partial ordering on $\boundedh_\sa$. Then $L_\sa$ is a \mc\  and normal partially ordered vector space.
\end{proposition}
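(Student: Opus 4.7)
The monotone completeness half is already done by \cref{1_res:L_sa_is_monotone_complete}, so the task reduces to verifying normality. One direction of the normality condition is immediate from the definition of the positive cone, so the content is to show that if $T\in L_\sa$ satisfies $\f{T,\varphi}\geq 0$ for every $\varphi\in\pos{(\ocdual{L_\sa})}$, then $T\in\pos{L_\sa}$.

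The plan is to use vector states as a separating family of positive order continuous functionals. For each $\xi\in\hilbert$, define $\varphi_\xi:L_\sa\to\RR$ by $\f{T,\varphi_\xi}\coloneqq\inp{T\xi,\xi}$. Then $\varphi_\xi$ is linear and positive, since $\inp{T\xi,\xi}\geq 0$ whenever $T$ is positive as an operator on $\hilbert$, which is exactly what it means for $T$ to lie in $\pos{L_\sa}$ under the inherited ordering.

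The key step is order continuity of $\varphi_\xi$. Suppose $T_\lambda\downarrow 0$ in $L_\sa$. By \cref{1_res:L_sa_is_monotone_complete} (applied to the increasing net $-T_\lambda$ that is bounded above by $0$ in $L_\sa$), the net $T_\lambda$ converges in the strong operator topology, and its SOT-limit coincides with $\bigwedge_\lambda T_\lambda=0$ in $L_\sa$. In particular, $T_\lambda\xi\to 0$ in $\hilbert$, so $\inp{T_\lambda\xi,\xi}\to 0$; since the net $\inp{T_\lambda\xi,\xi}$ is already decreasing, we conclude $\f{T_\lambda,\varphi_\xi}\downarrow 0$. Hence $\varphi_\xi\in\pos{(\ocdual{L_\sa})}$.

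Now if $T\in L_\sa$ satisfies $\f{T,\varphi}\geq 0$ for all $\varphi\in\pos{(\ocdual{L_\sa})}$, then in particular $\inp{T\xi,\xi}=\f{T,\varphi_\xi}\geq 0$ for every $\xi\in\hilbert$. By the usual characterisation of positivity in $\boundedh_\sa$, this means $T$ is positive as an element of $\boundedh_\sa$, and since the ordering on $L_\sa$ is inherited from $\boundedh_\sa$, we get $T\in\pos{L_\sa}$. There is no real obstacle here; the one subtlety worth flagging is that the SOT-convergence supplied by \cref{1_res:L_sa_is_monotone_complete} requires the upper bound in $L_\sa$ rather than merely in $\boundedh_\sa$, which is ensured by starting from $T_\lambda\downarrow 0$ in $L_\sa$ itself.
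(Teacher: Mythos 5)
Your proposal is correct and follows essentially the same route as the paper: both reduce to normality via \cref{1_res:L_sa_is_monotone_complete}, use the vector states $T\mapsto\inp{T\xi,\xi}$ as positive order continuous functionals (with order continuity deduced from the SOT-convergence supplied by that proposition), and conclude from $\inp{T\xi,\xi}\geq 0$ for all $\xi$ that $T\geq 0$. Your extra remarks about passing to the increasing net $-T_\lambda$ and about the upper bound lying in $L_\sa$ are correct but are details the paper leaves implicit.
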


\begin{proof}
In view of \cref{1_res:L_sa_is_monotone_complete}, we need to prove only that $L_\sa$ is normal. To this end, we define, for $x\in \hilbert$, the functional $\varphi_x:L_\sa\to\RR$ by $\f{T,\varphi_x}=\inp{Tx,x}$ for $T\in L_\sa$; here $\inp{\,\cdot\,,\cdot\,}$ denotes the inner product on $\hilbert$.  Then $\varphi_x$ is clearly positive. We claim that $\varphi_x$ is order continuous. To see this, suppose that $T_\lambda\downarrow 0$. By \cref{1_res:L_sa_is_monotone_complete}, this implies that $\SOTlim_{\lambda}T_\lambda=0$. Consequently,  we have $\f{T_\lambda,\varphi_x}=\inp{T_\lambda x,x}\downarrow 0$. Hence $\varphi_x$ is order continuous, and we conclude that $\varphi_x\in\pos{(\ocdual{V})}$. Finally, suppose that $T\in L_\sa$ is such that $\f{T,\varphi_x}\geq 0$ for all $x\in \hilbert$. Hence $\inp{Tx,x}\geq 0$ for all $x\in \hilbert$, so that $T\geq 0$. This proves that $L_\sa$ is normal.
\end{proof}

As von Neumann algebras are canonical examples of JBW-algebras (see, e.g.,  \cite[Definition~2.2]{alfsen_shultz_GEOMETRY_OF_STATE_SPACES_OF_OPERATOR_ALGEBRAS:2003} for a definition of the latter), the next example is somewhat related to \cref{1_res:L_sa_is_normal_and_monotone_complete}. We recall that any JB-algebra is partially ordered by its cone of squares (see \cite[Lemma~1.10]{alfsen_shultz_GEOMETRY_OF_STATE_SPACES_OF_OPERATOR_ALGEBRAS:2003}) and that this cone is generating; see \cite[Proposition~1.28]{alfsen_shultz_GEOMETRY_OF_STATE_SPACES_OF_OPERATOR_ALGEBRAS:2003}. The following is immediate from \cite[Corollary~2.17]{alfsen_shultz_GEOMETRY_OF_STATE_SPACES_OF_OPERATOR_ALGEBRAS:2003}.

\begin{proposition}\label{1_res:JBW_algebra_is_normal_and_monotone_complete}
A JBW-algebra is a normal and \mc\ partially ordered vector space.
\end{proposition}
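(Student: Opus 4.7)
The plan is to deduce both properties directly from \cite[Corollary~2.17]{alfsen_shultz_GEOMETRY_OF_STATE_SPACES_OF_OPERATOR_ALGEBRAS:2003}, essentially by matching up the terminology used there with the definitions employed in this paper. First I would record the context: a JBW-algebra $\jbw$ is a JB-algebra that is the Banach dual of a predual $\jbw_{\ast}$, its underlying real vector space is partially ordered by the (proper and generating) cone of squares, and the \emph{normal states} of $\jbw$ are the states that arise by evaluation against the positive norm-one elements of $\jbw_{\ast}$.

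The substance of the cited corollary that I would use is twofold: firstly, that every bounded increasing net in $\pos{\jbw}$ has a supremum in $\jbw$ and, secondly, that the set of normal states of $\jbw$ is order-separating, in the sense that an element $a\in\jbw$ lies in $\pos{\jbw}$ if and only if $\sigma(a)\geq 0$ for every normal state $\sigma$. The first of these statements is exactly \mc ness in the sense of \cref{1_def:order_completeness}\ref{1_part:order_completeness_2}.

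For normality in the sense of \cref{1_def:normal_space}, the only point to verify is that each normal state is order continuous, i.e., lies in $\pos{(\ocdual{\jbw})}$. For this I would use the standard fact that monotone convergence in a JBW-algebra implies weak\textsuperscript{*}-convergence: if $a_\lambda\downarrow 0$ in $\jbw$, then the decreasing net $\{a_\lambda\}$ is norm-bounded, and the corollary shows that its infimum computed against any element of $\jbw_{\ast}$ agrees with the infimum in $\jbw$ paired against that element, so $\sigma(a_\lambda)\downarrow 0$ for every normal state $\sigma$. Hence each normal state belongs to $\pos{(\ocdual{\jbw})}$, and since the normal states already order-separate $\jbw$, the even larger cone $\pos{(\ocdual{\jbw})}$ does so as well. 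This is precisely the normality condition of \cref{1_def:normal_space}.

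The main obstacle is not mathematical but bibliographic: the potentially troublesome step is to confirm that the formulation of the cited corollary really does package together both the monotone-completeness assertion and the order-theoretic separation by normal states in a form directly usable here. Once that is done, no new argument is required, since the order-continuity of normal states is a routine consequence of the weak\textsuperscript{*}-continuity that comes built into the JBW-structure.
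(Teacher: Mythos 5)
Your proposal is correct and follows essentially the same route as the paper, which likewise derives both properties as an immediate consequence of \cite[Corollary~2.17]{alfsen_shultz_GEOMETRY_OF_STATE_SPACES_OF_OPERATOR_ALGEBRAS:2003}. The extra detail you supply\textemdash that the normal states are order continuous in the sense of \cref{1_def:order_continuity} and order-separating, hence witness normality as in \cref{1_def:normal_space}\textemdash is exactly the unpacking the paper leaves implicit.
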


The \cref{1_res:banach_lattice_with_oc_norm_is_normal_and_monotone_complete,1_res:V_is_normal_and_monotone_complete,1_res:L_sa_is_normal_and_monotone_complete,1_res:JBW_algebra_is_normal_and_monotone_complete} can be combined in various ways. The following immediate result seems worth recording explicitly.

\begin{theorem}\label{1_res:combination_result_for_normality_and_monotone_completeness}
Let $\os$ be a directed partially ordered vector space.

\begin{enumerate}
\item\label{1_part:combination_result_for_normality_and_monotone_completeness_1}
If $\ostwo$ is a Banach lattice with an order continuous norm, and if $V$ is a linear subspace of $\linearop{\os,\ostwo}$ that contains $\regularop{\os,\ostwo}$, then $V$ is a \mc\ and normal partially ordered vector space. In particular, $\regularop{\os}$ is a \Dc\ and normal vector lattice for every Banach lattice $\os$ with an order continuous norm.
\item\label{1_part:combination_result_for_normality_and_monotone_completeness_2}
If $L$ is a strongly closed complex linear subspace of the bounded operators on a complex Hilbert space, with self-adjoint part $L_\sa$, and if $V$ is a linear subspace of $\linearop{\os,L_\sa}$ that contains $\regularop{\os,L_\sa}$, then $V$ is a \mc\ and normal partially ordered vector space. In particular, $\regularop{L_\sa}$ is a \mc\ and normal partially ordered vector space.
\item\label{1_part:combination_result_for_normality_and_monotone_completeness_3}
If $\jbw$ is a JBW-algebra, and if $V$ is a linear subspace of $\linearop{\os,\jbw}$ that contains $\regularop{\os,\jbw}$, then $V$ is a \mc\ and normal partially ordered vector space. In particular, $\regularop{\jbw}$ is a \mc\ and normal  partially ordered vector space.
\end{enumerate}
\end{theorem}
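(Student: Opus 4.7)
The plan is to read this theorem as a mechanical synthesis of the preparatory propositions of the section; no new ideas are required, and each of the three parts is obtained by feeding one of the earlier ``source'' results into \cref{1_res:V_is_normal_and_monotone_complete}.

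For part~\ref{1_part:combination_result_for_normality_and_monotone_completeness_1}, I would first invoke \cref{1_res:banach_lattice_with_oc_norm_is_normal_and_monotone_complete} to see that the Banach lattice $\ostwo$ (with its order continuous norm) is a \mc\ and normal partially ordered vector space. Since $\os$ is directed by hypothesis, \cref{1_res:V_is_normal_and_monotone_complete} then applies with this $\ostwo$ as codomain and yields immediately that every $V$ with $\regularop{\os,\ostwo}\subseteq V\subseteq\linearop{\os,\ostwo}$ is \mc\ and normal. For the ``in particular'' clause I would specialise to $\os=\ostwo$ and $V=\regularop{\os}$. A Banach lattice is a vector lattice and hence directed, so the main statement supplies normality, while \cite[Theorem~1.59]{aliprantis_tourky_CONES_AND_DUALITY:2007}, combined with the \Dc ness of $\ostwo$ (itself a consequence of the order continuity of the norm, cf.\ \cite[Corollary~4.10]{aliprantis_burkinshaw_POSITIVE_OPERATORS_SPRINGER_REPRINT:2006}), upgrades ``\mc'' to ``\Dc\ vector lattice''.

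Parts~\ref{1_part:combination_result_for_normality_and_monotone_completeness_2} and~\ref{1_part:combination_result_for_normality_and_monotone_completeness_3} follow by exactly the same template. \cref{1_res:L_sa_is_normal_and_monotone_complete} (resp.\ \cref{1_res:JBW_algebra_is_normal_and_monotone_complete}) supplies the \mc ness and normality of the codomain $L_\sa$ (resp.\ the JBW-algebra $\jbw$), and \cref{1_res:V_is_normal_and_monotone_complete} then transfers these two properties to $V$. For each ``in particular'' statement I would take $\os$ equal to the codomain itself ($L_\sa$ or $\jbw$); the directedness of $\jbw$ is part of the JBW-algebra structure, its cone of squares being generating by \cite[Proposition~1.28]{alfsen_shultz_GEOMETRY_OF_STATE_SPACES_OF_OPERATOR_ALGEBRAS:2003}.

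The main obstacle is essentially none: all substantive work has already been done in the preparatory propositions, and this theorem is pure bookkeeping. The one thing to be alert to is the matching of variables\,\textemdash\,the codomain in each source proposition becomes the codomain of \cref{1_res:V_is_normal_and_monotone_complete}, while the domain $\os$ is supplied externally\,\textemdash\,together with the brief directedness check for the ``in particular'' specialisations.
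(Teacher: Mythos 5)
Your proposal is correct and matches the paper exactly: the paper offers no separate argument, calling the theorem an ``immediate'' combination of \cref{1_res:banach_lattice_with_oc_norm_is_normal_and_monotone_complete,1_res:L_sa_is_normal_and_monotone_complete,1_res:JBW_algebra_is_normal_and_monotone_complete} with \cref{1_res:V_is_normal_and_monotone_complete}, which is precisely the synthesis you describe. Your handling of the ``in particular'' clauses (specialising $\os$ to the codomain, checking directedness, and invoking \cite[Theorem~1.59]{aliprantis_tourky_CONES_AND_DUALITY:2007} together with the \Dc ness of a Banach lattice with order continuous norm for part~\ref{1_part:combination_result_for_normality_and_monotone_completeness_1}) is exactly what the paper's earlier discussion intends.
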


\section{$\pososext$-valued measures}
\label{1_sec:positive_osext-valued_measure}

\noindent After the motivational \cref{1_sec:normal_monotone_complete_spaces}, we now start with the actual measure and integration theory. The current section is concerned with the basics for measures with values in the extended positive cone $\pososext$ of a partially ordered vector space $\os$.

In this paper, a \emph{measurable space} is a pair $\ms$, where $\pset$ is a set and $\alg$ is an algebra of subsets of $\pset$; that is,  $\alg$ is a non-empty collection of subsets of $\pset$ that is closed under the taking of complements and under the taking of finite unions. The elements of $\alg$ are the measurable subsets of $\pset$. It will become necessary only in \cref{1_sec:integration_with_respect_to_a_positive_os-valued_measure} to suppose that $\alg$ is a $\sigma$-algebra.

The $\sigma$-additivity of a measure $\npm:\alg\to\posRext$ requires that $\npm\left(\bigcup_{n=1}^\infty\mss_n\right)=\sum_{n=1}^\infty\npm(\mss_n)$ in $\posRext$ whenever $\seq{\mss}$ is pairwise disjoint sequence in $\alg$ such that $\bigcup_{n=1}^\infty\mss_n\in\alg$.\footnote{In other sources, such $\npm$ can then be called a pre-measure; see \cite[Definition~I.3.1]{bauer_MEASURE_AND_INTEGRATION_THEORY:2001}, for example.} In this definition, the convergence of the series in $\posRext$ has to be given a meaning. One possibility is to interpret it as the convergence of the sequence of partial sums in the topology of $\posRext$ as the one-point compactification of $\RR$. This does not admit a generalisation to maps $\npm:\alg\to\pososext$ for a general partially ordered vector space $\os$, since no topology need be present. If $\npm$ takes values in $\posRext$, then one can, however, equivalently require that $\npm\left(\bigcup_{n=1}^\infty\mss_n\right)=\bigvee_{N=1}^\infty\sum_{n=1}^N\npm(\mss_n)$ in $\posRext$, where the supremum is to be taken in the partially ordered set $\posRext$. Since this involves only finite sums, topological convergence is no longer an issue and the requirement \emph{does} make sense for general $\os$, provided one guarantees that the supremum in the right hand side always exists. Thus one is led to suppose that $\os$ be \smc.

The following definition is, therefore, a natural one.  It is due to Wright; see \cite[p.~111]{wright:1969}. We recall that the extension $\osext$ of a partially ordered vector space $\os$ has been introduced in \cref{1_sec:partially_ordered_vector_spaces}.

\begin{definition}\label{1_def:positive_pososext_valued_measure}
Let $\ms$ be a measurable space, and let $\os$ be a \smc\  partially ordered vector space. An \emph{$\pososext$-valued measure on $\alg$} is a map $\npm:\alg\rightarrow \pososext$ such that:
\begin{enumerate}
\item\label{1_part:pososext_valued_measure_1}
$\npm(\emptyset)=0$;
\item\label{1_part:pososext_valued_measure_2}
whenever $\seq{\mss}$ is a pairwise disjoint sequence in $\alg$ with $\bigcup_{n=1}^\infty\mss_n\in\alg$, then
 \begin{equation}\label{1_eq:sigma_additivity}
 \npm\left(\bigcup_{n=1}^\infty\mss_n\right)=\bigvee_{N=1}^\infty\sum_{n=1}^N\npm(\mss_n)
 \end{equation}
 in $\osext$.
\end{enumerate}

Since the partial sums form a increasing sequence in $\osext$ because $\npm(\alg)\subseteq \pososext$, it follows from part~\ref{1_part:completeness_properties_of_extended_space_1} of \cref{1_res:completeness_properties_of_extended_space} that, for a given enumeration of the $\mss_n$, the right hand side of \cref{1_eq:sigma_additivity} always exists in $\osext$. A moment's thought shows that this supremum is actually independent of the choice for the enumeration.
\end{definition}

A quadruple $\msm$, where $\pset$ is a set, $\alg$ is an algebra of subsets of $\pset$, $\os$ is a \smc\ partially ordered vector space, and $\npm:\alg\to\pososext$ is an $\pososext$-valued measure on $\alg$, will be called a \emph{measure space}. We shall refer to the property under part~\ref{1_part:pososext_valued_measure_2} of \cref{1_def:positive_pososext_valued_measure} as the \emph{$\sigma$-additivity of $\npm$}.

If $\npm(\pset)\in\posos$, then we say that $\npm$ is \emph{finite}, or that it is \emph{$\posos$-valued}. It will follow from part~\ref{1_part:all_to_be_expected_2} of \cref{1_res:all_to_be_expected} that then $\npm(\mss)\in\posos$ for all $\mss\in\alg$. If $\npm$ is not finite, i.e.,  if $\npm(\ts)=\largest$, then $\mu$ is said to be \emph{infinite}.

A measurable subset $\mss$ of $\pset$ is \emph{$\sigma$-finite} if there exists a sequence $\seq{\mss}$ in $\alg$ such that $\mss\subseteq\bigcup_{n=1}^\infty \mss_n$ and $\npm(\mss_n)\in\posos$ for all $n\geq 1$. If $\pset$ is $\sigma$-finite, then we say that $\npm$ is a \emph{$\sigma$-finite measure}.

A \emph{null set} is a measurable subset of $\pset$ with measure zero. A measure space is called  \emph{complete} if a subset of a null set is still a measurable subset. It will follow from part~\ref{1_part:all_to_be_expected_2} of \cref{1_res:all_to_be_expected} that a measurable subset of a null set is again a null set.

A property that a point $\pt$ in $\pset$ may or may not have is said to hold \emph{$\npm$-almost everywhere}, or to hold \emph{for $\npm$-almost all $\pt$ in $\pset$}, if the subset of $\pset$ consisting of those points that do not have this property is contained in a null set. It is not required that this subset of exceptional points be measurable. If the measure is clear from the context, we shall simply  write that the property holds almost everywhere, or that it holds for almost all $\pt$ in $\pset$.

\medskip

 Before we proceed with the general theory, let us, by way of motivation, consider two particular cases that we have in mind if $\os$ happens to be a space of operators. They show that our ordered requirement for the $\sigma$-additivity  of an operator-valued measure is then the same as the classical $\sigma$-additivity in the strong operator topology.

 The first case is in the context of \cref{1_res:L_sa_is_normal_and_monotone_complete}.

\begin{lemma}\label{1_res:measures_with_values_in_L_sa}
	Let  $\hilbert$ be a complex Hilbert space, and let $L$ be a strongly closed complex linear subspace of $\boundedh$.  Let $L_\sa$ be the real vector space that consists of the self-adjoint elements of $L$, supplied with the partial ordering that is inherited from the usual partial ordering on $\boundedh_\sa$.
	
	Let $\ms$ be a measurable space, let $\npm:\alg\to\pos{L_\sa}$ be a map such that $\npm(\emptyset)=0$, and let $\seq{\mss}\subseteq\alg$ be such that $\bigcup_{n=1}^\infty\mss_n\in\alg$.
	
	Then the following are equivalent:
	\begin{enumerate}
	\item\label{1_part:measures_with_values_in_L_sa_1} $\npm\left(\bigcup_{n=1}^\infty\mss_n\right)=\bigvee_{N=1}^\infty\sum_{n=1}^ N\npm(\mss_n)$ in $L_\sa$;
	\item\label{1_part:measures_with_values_in_L_sa_2}
	$\npm\left(\bigcup_{n=1}^\infty\mss_n\right)x=\sum_{n=1}^\infty \npm(\mss_n) x$ in the norm topology of $\hilbert$ for all $x\in \hilbert$.
	\end{enumerate}
	
\end{lemma}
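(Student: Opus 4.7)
The plan is to prove both implications by reducing the order-theoretic condition (1) to an SOT statement via \cref{1_res:L_sa_is_monotone_complete}, and then matching with the norm convergence in (2). Set $S_N\coloneqq\sum_{n=1}^N\npm(\mss_n)$; since every $\npm(\mss_n)\in\pos{L_\sa}$, the sequence $\seq{S}$ (indexed by $N$) is increasing in $L_\sa$. Write $T\coloneqq\npm\left(\bigcup_{n=1}^\infty\mss_n\right)\in\pos{L_\sa}$.

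For the direction \ref{1_part:measures_with_values_in_L_sa_1} implies \ref{1_part:measures_with_values_in_L_sa_2}, the hypothesis says precisely that $S_N\uparrow T$ in $L_\sa$. By \cref{1_res:L_sa_is_monotone_complete}, this forces $T=\SOTlim_N S_N$, so $S_N x\to Tx$ in norm for every $x\in\hilbert$, which is exactly \ref{1_part:measures_with_values_in_L_sa_2}.

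For the converse direction, the task is to recover an order-theoretic statement (existence of a supremum equal to $T$) from the merely pointwise norm convergence in \ref{1_part:measures_with_values_in_L_sa_2}. First I would observe that \ref{1_part:measures_with_values_in_L_sa_2} implies $S_N\to T$ in the strong operator topology. To apply \cref{1_res:L_sa_is_monotone_complete} I need to know that the increasing sequence $\seq{S}$ is bounded above in $L_\sa$ by a candidate supremum, and the natural candidate is $T$ itself. For each $x\in\hilbert$, the scalar sequence $\inp{S_N x,x}$ is increasing (because $S_{N+1}-S_N=\npm(\mss_{N+1})\geq 0$) and converges to $\inp{Tx,x}$ by the norm convergence in \ref{1_part:measures_with_values_in_L_sa_2}; hence $\inp{S_N x,x}\leq\inp{Tx,x}$ for all $N$ and all $x\in \hilbert$, which gives $S_N\leq T$ in $\boundedh_\sa$, and therefore in $L_\sa$. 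Now the increasing sequence $\seq{S}$ in $L_\sa$ is bounded above by $T\in L_\sa$, so by \cref{1_res:L_sa_is_monotone_complete} its supremum in $L_\sa$ exists and equals $\SOTlim_N S_N$. Since $\SOTlim_N S_N=T$, we obtain $\bigvee_{N=1}^\infty S_N=T$ in $L_\sa$, which is \ref{1_part:measures_with_values_in_L_sa_1}.

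The main obstacle is the converse direction: bridging the gap from norm convergence of $S_N x$ for every $x$ (a pointwise topological statement) to the existence of a supremum in $L_\sa$ (an order-theoretic statement). The bridge is built from two ingredients that together are essentially the content of \cref{1_res:L_sa_is_monotone_complete}: first, that $T$ dominates all $S_N$ in the order of $L_\sa$, established via the standard inner-product characterisation of positivity in $\boundedh_\sa$; and second, that for increasing nets in $L_\sa$ bounded above, the SOT limit and the order supremum coincide.
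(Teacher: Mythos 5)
Your proof is correct and follows essentially the same route as the paper's: the forward direction is read off from \cref{1_res:L_sa_is_monotone_complete}, and for the converse you establish $\sum_{n=1}^N\npm(\mss_n)\leq\npm\left(\bigcup_{n=1}^\infty\mss_n\right)$ via inner products (the paper bounds the partial sums of $\inp{\npm(\mss_n)x,x}$ by the full series, which is the same observation as your monotone-convergence argument for the scalar sequence) and then invokes the coincidence of order supremum and SOT-limit from \cref{1_res:L_sa_is_monotone_complete}. No gaps.
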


\begin{proof}
It is immediate from \cref{1_res:L_sa_is_monotone_complete} that  part~\ref{1_part:measures_with_values_in_L_sa_1} implies part~\ref{1_part:measures_with_values_in_L_sa_2}.
	
We prove that part~\ref{1_part:measures_with_values_in_L_sa_2} implies part~\ref{1_part:measures_with_values_in_L_sa_1}. For each $N\geq 1$, we have, for all $x\in\hilbert$,
\begin{align*}
\lrinp{\sum_{n=1}^N\npm(\mss_n)x,x}&=\sum_{n=1}^N\lrinp{\npm(\mss_n)x,x}\\
&\leq \sum_{n=1}^\infty\lrinp{\npm(\mss_n)x,x}\\
&= \lrinp{\sum_{n=1}^\infty\npm(\mss_n)x,x}\\
&=\lrinp{\npm\left(\bigcup_{n=1}^\infty\mss_n\right)x,x}.
\end{align*}
We thus see that $\sum_{n=1}^N\npm(\mss_n)\leq\npm\left(\bigcup_{n=1}^\infty\mss_n\right)$ for all $N\geq 1$. Since $\sum_{n=1}^N\npm(\mss_n)\uparrow$,  \cref{1_res:L_sa_is_monotone_complete} shows that the sequence has a supremum in $L_\sa$, and that this supremum is also its SOT-limit. It is obvious from the validity of part~\ref{1_part:measures_with_values_in_L_sa_2} that this SOT-limit is $\npm\left(\bigcup_{n=1}^\infty\mss_n\right)$.
\end{proof}

The second case is in the context of part~\ref{1_part:combination_result_for_normality_and_monotone_completeness_1} of \cref{1_res:combination_result_for_normality_and_monotone_completeness}.

\begin{lemma}\label{1_res:measures_with_values_in_the_regular_operators}
	Let $\ms$ be a measurable space, let $\os$ be Dedekind complete Banach lattice, let $\npm:\alg\to\pos{\regularop{\os}}$ be a map such that $\npm(\emptyset)=0$, and let $\seq{\mss}\subseteq\alg$ be such that $\bigcup_{n=1}^\infty\mss_n\in\alg$.
	
	\begin{enumerate}
		\item\label{1_part:measures_with_values_in_the_regular_operators_1}
		If $\npm\left(\bigcup_{n=1}^\infty\mss_n\right)x=\sum_{n=1}^\infty \npm(\mss_n) x$ in the norm topology on $\os$ for all $x\in \os$, then $\npm\left(\bigcup_{n=1}^\infty\mss_n\right)=\bigvee_{N=1}^\infty\sum_{n=1}^ N\npm(\mss_n)$ in $\regularop{\os}$.
		\item\label{1_part:measures_with_values_in_the_regular_operators_2}
		If $\npm\left(\bigcup_{n=1}^\infty\mss_n\right)=\bigvee_{N=1}^\infty\sum_{n=1}^ N\npm(\mss_n)$ in $\regularop{\os}$ and if the norm on $\os$ is $\sigma$-order continuous,\footnote{The combination of the $\sigma$-order continuity of the norm and the ($\sigma$-)\Dc ness implies that the norm is even order continuous; see \cite[Theorem~2.4.2]{meyer-nieberg_BANACH_LATTICES:1991} } then $\npm\left(\bigcup_{n=1}^\infty\mss_n\right)x=\sum_{n=1}^\infty \npm(\mss_n) x$ in the norm topology of $\os$ for all $x\in \os$.
	\end{enumerate}
\end{lemma}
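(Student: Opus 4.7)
The plan is to link the two notions of convergence via the pointwise characterisation of suprema of increasing nets of regular operators provided by \cref{1_res:V_is_monotone_complete}, together with the standard interplay between norm convergence of monotone sequences and order convergence in a Banach lattice (whose positive cone is closed). Throughout, write $T_N\coloneqq\sum_{n=1}^N\npm(\mss_n)$ and $T\coloneqq\npm\bigl(\bigcup_{n=1}^\infty\mss_n\bigr)$, and note that $T_N\uparrow$ in $\regularop{\os}$ because each $\npm(\mss_n)$ is positive.

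For part~\ref{1_part:measures_with_values_in_the_regular_operators_1}, I would fix $x\in\pos{\os}$ and observe that $T_N x\uparrow$ in $\os$ and $T_Nx\to Tx$ in norm by assumption. Because the positive cone of $\os$ is closed, taking norm limits in $T_M x - T_N x\geq 0$ (for $M\geq N$) yields $Tx\geq T_N x$ for every $N$, and the same closedness argument shows that any order upper bound of $\seq[N]{T_Nx}$ dominates $Tx$; hence $T_N x\uparrow Tx$ in $\os$ for every $x\in\pos{\os}$. Since $\os$ is directed and Dedekind complete, \cref{1_res:V_is_monotone_complete} (applied to the increasing sequence $\seq[N]{T_N}$ in $\regularop{\os}$, which is bounded above in $\regularop{\os}$ by $T$) implies that $\bigvee_{N=1}^\infty T_N$ exists in $\regularop{\os}$ and, for each $x\in\pos{\os}$, is given pointwise by $\sup_{N\geq 1}T_N x$, which equals $Tx$ by what has just been shown. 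Two positive operators that agree on $\pos{\os}$ coincide on the directed space $\os$, so $\bigvee_{N=1}^\infty T_N=T$ in $\regularop{\os}$, as required.

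For part~\ref{1_part:measures_with_values_in_the_regular_operators_2}, I would run the previous argument in reverse. Assuming $T=\bigvee_{N=1}^\infty T_N$ in $\regularop{\os}$, \cref{1_res:V_is_monotone_complete} yields $T_N x\uparrow Tx$ in $\os$ for every $x\in\pos{\os}$, equivalently $(T-T_N)x\downarrow 0$. Since the footnote observes that $\sigma$-order continuity of the norm on a Dedekind complete Banach lattice entails full order continuity of the norm, I infer $\norm{(T-T_N)x}\to 0$, i.e.\ $T_Nx\to Tx$ in norm. Extending from $\pos{\os}$ to all of $\os$ by writing $x=\pos{x}-\negt{x}$ and using linearity of each $T_N$ and of $T$, together with the triangle inequality, gives $T_Nx\to Tx$ in norm for every $x\in\os$. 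Rewriting this as convergence of the partial sums yields the desired identity $\npm\bigl(\bigcup_{n=1}^\infty\mss_n\bigr)x=\sum_{n=1}^\infty\npm(\mss_n)x$ in the norm topology.

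The only delicate point is the translation between the two convergence notions for $x\in\pos{\os}$: in part~\ref{1_part:measures_with_values_in_the_regular_operators_1} one must know that a monotone norm-convergent sequence in a Banach lattice converges in order to the same limit (which is immediate from the closedness of the positive cone), while in part~\ref{1_part:measures_with_values_in_the_regular_operators_2} one relies on order continuity of the norm to convert order convergence back into norm convergence. Once these two facts are in place, the monotone completeness formula of \cref{1_res:V_is_monotone_complete} does the rest of the work, and no further estimate is needed.
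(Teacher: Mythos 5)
Your proof is correct and follows essentially the same route as the paper's: both parts rest on the pointwise description of suprema of increasing sequences of positive operators from \cref{1_res:V_is_monotone_complete}, combined with the fact that a monotone norm-convergent sequence in a space with closed positive cone has its norm limit as its supremum (for part~\ref{1_part:measures_with_values_in_the_regular_operators_1}) and with the $\sigma$-order continuity of the norm (for part~\ref{1_part:measures_with_values_in_the_regular_operators_2}). The paper's version is merely terser, compressing your closed-cone argument into the single remark that the norm limit of an increasing norm-convergent sequence is also its supremum.
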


\begin{proof}
	We prove part~\ref{1_part:measures_with_values_in_the_regular_operators_1}. If $x\in\posos$, then $\sum_{n=1}^N\npm(\mss_n)x\uparrow$. Since this increasing sequence is norm convergent, its norm limit is also its supremum, i.e.,
	$\sum_{n=1}^N\npm(\mss_n)x\uparrow \sum_{n=1}^\infty \npm(\mss_n) x =\npm\left(\bigcup_{n=1}^\infty\mss_n\right)x$. This shows that $\sum_{n=1}^N\npm(\mss_n)\uparrow\npm\left(\bigcup_{n=1}^\infty\mss_n\right)$.
	
	We prove part~\ref{1_part:measures_with_values_in_the_regular_operators_2}. Let $x\in\posos$. Since $\sum_{n=1}^N\npm(\mss_n)\uparrow \npm\left(\bigcup_{n=1}^\infty\mss_n\right)$, we also have $\sum_{n=1}^N\npm(\mss_n)x\uparrow \npm\left(\bigcup_{n=1}^\infty\mss_n\right)x$. Since the norm is $\sigma$-order continuous, we see that $\npm\left(\bigcup_{n=1}^\infty\mss_n\right)x=\sum_{n=1}^\infty\npm(\mss_n)x$ in the norm topology of $\os$. By linearity, this is then also true for arbitrary $x\in\os$.
\end{proof}

Continuing with the general theory, we collect the usual suspects in the following result.

\begin{lemma}\label{1_res:all_to_be_expected}Let $\msm$ be a measure space.
\begin{enumerate}
\item\label{1_part:all_to_be_expected_1}
If $\mss_1,\dotsc,\mss_n\in\alg$ are pairwise disjoint, then $\npm\left(\bigcup_{i=1}^n\mss_i\right)=\sum_{i=1}^n\npm(\mss_i)$ in $\osext$.
\item\label{1_part:all_to_be_expected_2}
If $\mss_1,\mss_2\in\alg$ and $\mss_1\subseteq\mss_2$, then $\npm(\mss_1)\leq\npm(\mss_2)$ in $\osext$.
\item\label{1_part:all_to_be_expected_3}
If $\mss_1, \mss_2\in\alg$, then
$\npm(\mss_1)+\npm(\mss_2)=\npm(\mss_1\cap\mss_2)+\npm(\mss_1\cup\mss_2)$ in $\osext$.
\item\label{1_part:all_to_be_expected_4}
If $\mss_1,\mss_2\in\alg$, $\mss_1\supseteq\mss_2$, and $\npm(\mss_2)\in\os$, then $\npm(\mss_1\setminus\mss_2)=\npm(\mss_1)-\npm(\mss_2)$ in $\osext$.
\item\label{1_part:all_to_be_expected_5}
If $\seq{\mss}$ is a sequence in $\alg$ with $\bigcup_{n=1}^\infty\mss_n\in\alg$, then
 \[
 \npm\left(\bigcup_{n=1}^\infty\mss_n\right)\leq\bigvee_{N=1}^\infty\sum_{n=1}^N\npm(\mss_n),
 \]
 in $\osext$.
\end{enumerate}
\end{lemma}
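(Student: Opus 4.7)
The overall plan is to reduce each part to \cref{1_def:positive_pososext_valued_measure}\ref{1_part:pososext_valued_measure_2} together with the additive and order-theoretic properties of $\osext$ gathered in \cref{1_res:operations_in_extended_space,1_res:binary_operations_in_extended_space,1_res:completeness_properties_of_extended_space}. For part~\ref{1_part:all_to_be_expected_1} I would apply $\sigma$-additivity to the sequence $\mss_1,\dotsc,\mss_n,\emptyset,\emptyset,\dotsc$: since $\npm(\emptyset)=0$, the partial sums in $\osext$ stabilise at $\sum_{i=1}^n\npm(\mss_i)$ for every $N\geq n$, which is therefore their supremum. Part~\ref{1_part:all_to_be_expected_2} then follows from the disjoint decomposition $\mss_2=\mss_1\cup(\mss_2\setminus\mss_1)$ in $\alg$: part~\ref{1_part:all_to_be_expected_1} gives $\npm(\mss_2)=\npm(\mss_1)+\npm(\mss_2\setminus\mss_1)$, and since $\npm(\mss_2\setminus\mss_1)\in\pososext$ and addition on $\osext$ preserves order (as recorded in \cref{1_sec:partially_ordered_vector_spaces}), we obtain $\npm(\mss_1)\leq\npm(\mss_2)$.

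For part~\ref{1_part:all_to_be_expected_3} I would combine the disjoint decompositions $\mss_1\cup\mss_2=\mss_1\cup(\mss_2\setminus\mss_1)$ and $\mss_2=(\mss_1\cap\mss_2)\cup(\mss_2\setminus\mss_1)$; applying part~\ref{1_part:all_to_be_expected_1} to each gives $\npm(\mss_1\cup\mss_2)=\npm(\mss_1)+\npm(\mss_2\setminus\mss_1)$ and $\npm(\mss_2)=\npm(\mss_1\cap\mss_2)+\npm(\mss_2\setminus\mss_1)$, and the identity drops out on adding $\npm(\mss_1\cap\mss_2)$ to the first relation and using commutativity and associativity of addition on the monoid $\osext$. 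For part~\ref{1_part:all_to_be_expected_4} I would once more invoke part~\ref{1_part:all_to_be_expected_1} for the disjoint union $\mss_1=\mss_2\cup(\mss_1\setminus\mss_2)$ to obtain $\npm(\mss_1)=\npm(\mss_2)+\npm(\mss_1\setminus\mss_2)$ in $\osext$. If $\npm(\mss_1)\in\os$, then the definition of $+$ on $\osext$ together with $\npm(\mss_2)\in\os$ forces $\npm(\mss_1\setminus\mss_2)\in\os$, and subtracting the finite element $\npm(\mss_2)$ in $\os$ yields the formula. If instead $\npm(\mss_1)=\largest$, the same definition forces $\npm(\mss_1\setminus\mss_2)=\largest$, so the stated equality holds with the natural reading $\largest-z=\largest$ for $z\in\os$.

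For part~\ref{1_part:all_to_be_expected_5} I would disjointify in the usual way, setting $\mss_1^\prime\coloneqq\mss_1$ and $\mss_n^\prime\coloneqq\mss_n\setminus\bigcup_{k=1}^{n-1}\mss_k$ for $n\geq 2$. Each $\mss_n^\prime$ lies in $\alg$ because $\alg$ is closed under complements and finite unions; the $\mss_n^\prime$ are pairwise disjoint, $\mss_n^\prime\subseteq\mss_n$, and $\bigcup_{n=1}^\infty\mss_n^\prime=\bigcup_{n=1}^\infty\mss_n\in\alg$. Applying $\sigma$-additivity to $\{\mss_n^\prime\}_{n=1}^\infty$ and using part~\ref{1_part:all_to_be_expected_2} together with the order-preservation of addition on $\osext$ then yields
\[
\npm\left(\bigcup_{n=1}^\infty\mss_n\right)=\bigvee_{N=1}^\infty\sum_{n=1}^N\npm(\mss_n^\prime)\leq\bigvee_{N=1}^\infty\sum_{n=1}^N\npm(\mss_n),
\]
as required.

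The main obstacle I expect is the bookkeeping around $\largest$ in part~\ref{1_part:all_to_be_expected_4}, where ordinary real-line subtraction has no global analogue on $\osext$ and one must split on whether $\npm(\mss_1)$ is finite before drawing a conclusion. Apart from this, the arguments are the standard ones from real measure theory, transferred to $\osext$ via the elementary results of \cref{1_res:operations_in_extended_space,1_res:binary_operations_in_extended_space,1_res:completeness_properties_of_extended_space}.
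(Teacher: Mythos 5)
Your proof is correct and follows essentially the same route as the paper: padding with empty sets for part (1), disjoint decompositions for parts (2)--(4), and the standard disjointification for part (5). The only cosmetic difference is in part (4), where the paper avoids your case split on whether $\npm(\mss_1)$ is finite by adding the monoid inverse $-\npm(\mss_2)\in\os$ to both sides of $\npm(\mss_1)=\npm(\mss_2)+\npm(\mss_1\setminus\mss_2)$, which is valid uniformly in $\osext$.
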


\begin{proof}
Part~\ref{1_part:all_to_be_expected_1} follows from the definitions when choosing $\mss_k=\emptyset$ for $k\geq n+1$.

Part~\ref{1_part:all_to_be_expected_2} is immediate from part~\ref{1_part:all_to_be_expected_1}.

For part~\ref{1_part:all_to_be_expected_3}, we use the disjoint decomposition $\mss_1=(\mss_1\cap\mss_2)\cup(\mss_1\setminus\mss_2)$ and part~\ref{1_part:all_to_be_expected_1} to see that
\[
\npm(\mss_1)=\npm(\mss_1\cap\mss_2)+\npm(\mss_1\setminus\mss_2).
\]
Similarly, we have
\[
\npm(\mss_2)=\npm(\mss_2\cap\mss_1)+\npm(\mss_2\setminus\mss_1).
\]
Hence
\[
\npm(\mss_1)+\npm(\mss_2)=2\npm(\mss_1\cap\mss_2)+\npm(\mss_1\setminus\mss_2)+\npm(\mss_2\setminus\mss_1)
\]
in $\osext$. Since $\mss_1\cap\mss_2$, $\mss_1\setminus\mss_2$, and $\mss_2\setminus\mss_1$ are pairwise disjoint, and since their union equals $\mss_1\cup\mss_2$, part~\ref{1_part:all_to_be_expected_1} shows that the right hand side of the above equation equals $\npm(\mss_1\cap\mss_2)+\npm(\mss_1\cup\mss_2)$, as required.

For part~\ref{1_part:all_to_be_expected_4}, we use part~\ref{1_part:all_to_be_expected_1} to see that $\npm(\mss_1)=\npm(\mss_2)+\npm(\mss_1\setminus\mss_2)$.
Since $\npm(\mss_2)\in\os$, it has an inverse $-\npm(\mss_2)$ in the monoid $\osext$; adding this inverse to both sides yields that $\npm(\mss_1\setminus\mss_2)=\npm(\mss_1)-\npm(\mss_2)$, as required.

For part~\ref{1_part:all_to_be_expected_5}, we let $\widetilde\mss_1=\mss_1$ and $\widetilde\mss_n=\mss_n\setminus{\bigcup_{k=1}^{n-1}\mss_k}$ for $n\geq 2$. Then, using part~\ref{1_part:all_to_be_expected_2}, we see that
\[
\npm\left(\bigcup_{n=1}^\infty\mss_n\right)=\npm\left(\bigcup_{n=1}^\infty\widetilde\mss_n\right)=\bigvee_{N=1}^\infty \sum_{n=1}^N\npm(\widetilde\mss_n)\leq \bigvee_{N=1}^\infty \sum_{n=1}^N\npm(\mss_n).
\]
\end{proof}

We continue with a first rudimentary form of the monotone convergence theorem; see \cref{1_res:monotone_convergence_theorem} for the latter.

\begin{proposition}\label{1_res:monotone_increasing_theorem_for_m}
Let $\msm$ be a measure space. If $\seq{\mss}$ is a increasing sequence in $\alg$ such that $\bigcup_{n=1}^\infty \mss_n\in\alg$, then $\npm(\mss_n)\uparrow\npm\left(\bigcup_{n=1}^\infty \mss_n\right)$ in $\osext$.
\end{proposition}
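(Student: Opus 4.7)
The plan is to reduce the assertion to the $\sigma$-additivity of $\npm$ via the standard trick of disjointifying the increasing sequence. Define $\widetilde{\mss}_1\coloneqq\mss_1$ and, for $n\geq 2$, $\widetilde{\mss}_n\coloneqq\mss_n\setminus\mss_{n-1}$. Since $\seq{\mss}$ is increasing and contained in the algebra $\alg$, each $\widetilde{\mss}_n$ is in $\alg$, the $\widetilde{\mss}_n$ are pairwise disjoint, and for every $N\geq 1$ one has $\bigcup_{n=1}^N\widetilde{\mss}_n=\mss_N$; in particular $\bigcup_{n=1}^\infty\widetilde{\mss}_n=\bigcup_{n=1}^\infty\mss_n\in\alg$.

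Next, applying the finite additivity from part~\ref{1_part:all_to_be_expected_1} of \cref{1_res:all_to_be_expected} to the disjoint decomposition $\mss_N=\bigcup_{n=1}^N\widetilde{\mss}_n$, we obtain
\[
\npm(\mss_N)=\sum_{n=1}^N\npm(\widetilde{\mss}_n)
\]
in $\osext$ for every $N\geq 1$. Now invoke the $\sigma$-additivity of $\npm$ on the disjoint family $\{\widetilde{\mss}_n\}$:
\[
\npm\!\left(\bigcup_{n=1}^\infty\mss_n\right)=\npm\!\left(\bigcup_{n=1}^\infty\widetilde{\mss}_n\right)=\bigvee_{N=1}^\infty\sum_{n=1}^N\npm(\widetilde{\mss}_n)=\bigvee_{N=1}^\infty\npm(\mss_N)
\]
in $\osext$. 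Finally, since $\mss_n\subseteq\mss_{n+1}$, part~\ref{1_part:all_to_be_expected_2} of \cref{1_res:all_to_be_expected} gives that $\{\npm(\mss_n)\}_{n=1}^\infty$ is an increasing sequence in $\osext$, and the identity above says precisely that its supremum in $\osext$ is $\npm\!\left(\bigcup_{n=1}^\infty\mss_n\right)$. Hence $\npm(\mss_n)\uparrow\npm\!\left(\bigcup_{n=1}^\infty\mss_n\right)$ in $\osext$, as required.

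There is no real obstacle here: the only mildly delicate point is remembering that all the sums and suprema are being interpreted in the extended monoid $\osext$ rather than in $\os$, so that the possibility of some $\npm(\widetilde{\mss}_n)$ or of the final supremum being equal to $\largest$ causes no trouble\textemdash the partial sums are still an increasing sequence in $\osext$ whose supremum exists by part~\ref{1_part:completeness_properties_of_extended_space_1} of \cref{1_res:completeness_properties_of_extended_space}, exactly as used in the definition of $\sigma$-additivity.
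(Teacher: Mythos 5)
Your proof is correct. It rests on the same disjointification $\widetilde{\mss}_1=\mss_1$, $\widetilde{\mss}_n=\mss_n\setminus\mss_{n-1}$ as the paper's proof, but the way you identify the partial sums with the $\npm(\mss_N)$ is different and, in fact, more economical. The paper expresses $\npm(\widetilde{\mss}_n)$ as the difference $\npm(\mss_n)-\npm(\mss_{n-1})$ via part~\ref{1_part:all_to_be_expected_4} of \cref{1_res:all_to_be_expected} and then telescopes; since subtraction is only available for elements of $\osext$ that have an inverse in the monoid, this forces a case distinction between the situation where $\bigvee_{n=1}^\infty\npm(\mss_n)$ is finite (where the telescoping argument is carried out) and the situation where it equals $\largest$ (which is handled separately by a direct monotonicity argument). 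You instead read the identity in the additive direction, $\npm(\mss_N)=\sum_{n=1}^N\npm(\widetilde{\mss}_n)$, which is just part~\ref{1_part:all_to_be_expected_1} of \cref{1_res:all_to_be_expected} applied to the disjoint decomposition of $\mss_N$ and is valid in $\osext$ with no finiteness hypothesis whatsoever. Consequently the $\sigma$-additivity of $\npm$ yields the conclusion in one stroke, with the possibly infinite values absorbed harmlessly into the monoid arithmetic of $\osext$, exactly as you point out at the end. What your route buys is the elimination of the case split; what the paper's route perhaps buys is a small illustration of how the subtraction identity of part~\ref{1_part:all_to_be_expected_4} is used in practice, a technique it needs again in \cref{1_res:monotone_decreasing_theorem_for_m}.
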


Note that, since $\npm(\mss_n)\uparrow$ in $\osext$, $\bigvee_{n=1}^\infty\npm(\mss_n)$ does indeed exists in $\osext$ by part~\ref{1_part:completeness_properties_of_extended_space_1} of \cref{1_res:completeness_properties_of_extended_space}.

\begin{proof}
Let us first suppose that $\bigvee_{n=1}^\infty\npm(\mss_n)\in\os$. In this case, $\npm(\mss_n)\in\os$ for all $n\geq 1$. Set $\widetilde\mss_1\coloneqq\mss_1$ and $\widetilde\mss_n\coloneqq\mss_n\setminus \mss_{n-1}$ for $n\geq 2$.  Then $\seq{\widetilde\mss}$ is a pairwise disjoint sequence in $\alg$ such that $\bigcup_{n=1}^\infty\widetilde \mss_n=\bigcup_{n=1}^\infty \mss_n\in\alg$. Since $\npm(\widetilde\mss_n)=\npm(\mss_n)-\npm(\mss_{n-1})$ for $n\geq 2$ by part~\ref{1_part:all_to_be_expected_4} of \cref{1_res:all_to_be_expected}, we see that
\begin{align*}
\npm\left(\bigcup_{n=1}^\infty \mss_n\right)&=\npm\left(\bigcup_{n=1}^\infty \widetilde\mss_n\right)\\
&=\bigvee_{N=1}^\infty\sum_{n=1}^N\npm(\widetilde\mss_n)\\
&=\bigvee_{N=2}^\infty\sum_{n=1}^N\npm(\widetilde\mss_n)\\
&=\bigvee_{N=2}^\infty\left[ \npm(\mss_1) + \sum_{n=2}^N\left(\npm(\mss_n)-\npm(\mss_{n-1})\right) \right]\\
&=\bigvee_{N=2}^\infty\npm(\mss_n)\\
&=\bigvee_{N=1}^\infty\npm(\mss_n).
\end{align*}
If $\bigvee_{n=1}^\infty\npm(\mss_n)=\largest$, then certainly
\[
\bigvee_{n=1}^\infty\npm(\mss_n)\geq\npm\left(\bigcup_{n=1}^\infty \mss_n\right).
\]
On the other hand, by the monotonicity of the measure, we obviously have $\npm(\mss_n)\leq\npm(\bigcup_{n=1}^\infty \mss_n)$ for all $n\geq 1$. Hence also
\[
\bigvee_{n=1}^\infty\npm(\mss_n)\leq\npm(\bigcup_{n=1}^\infty \mss_n).
\]
\end{proof}

Naturally, \cref{1_res:monotone_increasing_theorem_for_m} implies a counterpart for decreasing sequences of measurable subsets. It is a rudimentary form of the dominated convergence theorem; see \cref{1_res:dominated_convergence_theorem} for the latter.

\begin{proposition}\label{1_res:monotone_decreasing_theorem_for_m}
Let $\msm$ be a measure space.

If $\seq{\mss}$ is a decreasing sequence in $\alg$ with $\npm(\mss_1)\in \os$ such that $\bigcap_{n=1}^\infty\mss_n\in\alg$, then $\npm(\mss_n)\downarrow\npm\left(\bigcap_{n=1}^\infty\mss_n\right)$ in $\os$.
\end{proposition}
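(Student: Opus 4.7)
The plan is to reduce the decreasing case to the increasing one already proved in \cref{1_res:monotone_increasing_theorem_for_m} by passing to the complements inside $\mss_1$. Since $\mss_n\subseteq \mss_1$ and $\npm(\mss_1)\in\os$, part~\ref{1_part:all_to_be_expected_2} of \cref{1_res:all_to_be_expected} combined with part~\ref{1_part:operations_in_extended_space_2} of \cref{1_res:operations_in_extended_space} shows that $\npm(\mss_n)\in\os$ for every $n\geq 1$, and likewise $\npm\left(\bigcap_{n=1}^\infty\mss_n\right)\in\os$. Thus all quantities involved in the desired conclusion are finite, which is what allows subtraction inside $\os$.

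Define $\widetilde\mss_n\coloneqq\mss_1\setminus\mss_n$. Then $\seq{\widetilde\mss}$ is an increasing sequence in $\alg$ with $\bigcup_{n=1}^\infty\widetilde\mss_n=\mss_1\setminus\bigcap_{n=1}^\infty\mss_n\in\alg$. Part~\ref{1_part:all_to_be_expected_4} of \cref{1_res:all_to_be_expected} gives $\npm(\widetilde\mss_n)=\npm(\mss_1)-\npm(\mss_n)$ in $\os$ for every $n\geq 1$ and also
\[
\npm\left(\mss_1\setminus\bigcap_{n=1}^\infty\mss_n\right)=\npm(\mss_1)-\npm\left(\bigcap_{n=1}^\infty\mss_n\right).
\]
Applying \cref{1_res:monotone_increasing_theorem_for_m} to $\seq{\widetilde\mss}$ yields $\npm(\widetilde\mss_n)\uparrow\npm\left(\mss_1\setminus\bigcap_{n=1}^\infty\mss_n\right)$ in $\osext$. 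Since this supremum is finite, parts~\ref{1_part:operations_in_extended_space_1} and~\ref{1_part:operations_in_extended_space_2} of \cref{1_res:operations_in_extended_space} show that the increasing convergence actually takes place in $\os$:
\[
\npm(\mss_1)-\npm(\mss_n)\uparrow\npm(\mss_1)-\npm\left(\bigcap_{n=1}^\infty\mss_n\right)\quad\text{in }\os.
\]

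Finally, the order in the vector space $\os$ is translation invariant and is reversed under negation, so applying $\npm(\mss_1)-(\cdot)$ termwise (using, e.g., part~\ref{1_part:operations_in_extended_space_5} of \cref{1_res:operations_in_extended_space} with $x=-\npm(\mss_1)$ and then adding $\npm(\mss_1)$ after negating) transforms the increasing convergence above into
\[
\npm(\mss_n)\downarrow\npm\left(\bigcap_{n=1}^\infty\mss_n\right)\quad\text{in }\os,
\]
as required. There is no real obstacle here beyond the bookkeeping of keeping track of whether suprema and infima are taken in $\os$ or in $\osext$; the finiteness hypothesis $\npm(\mss_1)\in\os$ is precisely what guarantees that the subtractions needed in the complementation trick make sense and that the limit stays in $\os$.
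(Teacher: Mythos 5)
Your proof is correct and follows essentially the same route as the paper's: both pass to the increasing sequence $\mss_1\setminus\mss_n\uparrow\mss_1\setminus\bigcap_{n=1}^\infty\mss_n$, apply \cref{1_res:monotone_increasing_theorem_for_m}, and use part~\ref{1_part:all_to_be_expected_4} of \cref{1_res:all_to_be_expected} to convert back. The only difference is bookkeeping: you establish finiteness of all terms up front and then negate and translate within $\os$, whereas the paper writes one chain of identities in $\osext$ and cancels the finite elements at the end.
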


\begin{proof}
Set $\mss\coloneqq \bigcap_{n=1}^\infty\mss_n$.
Since $\mss_1\setminus\mss_n\uparrow \bigcup_{n=1}^\infty (\mss_1\setminus\mss_n)=\mss_1\setminus \mss$, \cref{1_res:monotone_increasing_theorem_for_m}, combined with part~\ref{1_part:all_to_be_expected_4} of \cref{1_res:all_to_be_expected} and part~\ref{1_part:operations_in_extended_space_6} of \cref{1_res:operations_in_extended_space}, shows that
\begin{align*}
\npm(\mss_1)&=\npm(\mss)+\npm(\mss_1\!\setminus\!\mss)\\
&=\npm(\mss)+\bigvee_{n=1}^\infty \npm(\mss_1\!\setminus\!\mss_n)\\
&=\npm(\mss)+\bigvee_{n=1}^\infty \left(\npm(\mss_1)-\npm(\mss_n)\right)\\
&=\npm(\mss)+\npm(\mss_1)+\bigvee_{n=1}^\infty(-\npm(\mss_n))\\
&=\npm(\mss)+ \npm(\mss_1)-\bigwedge_{n=1}^\infty\npm(\mss_n)
\end{align*}
in $\osext$. Since $\npm(\mss_1)$ and $\bigwedge_{n=1}^\infty\npm(\mss_n)$ have an inverse in $\osext$, we can now conclude that $\bigwedge_{n=1}^\infty\npm(\mss_n)=\npm(\mss)$.
\end{proof}

The final result of this section is a generalisation of the Borel-Cantelli lemma. We refer to  \cite[Exercise~1.2.89]{bogachev_MEASURE_THEORY_VOLUME_I:2007} for the classical result for a probability measure. Note, however, that our measure need not be finite. This is the reason of the appearance of a finiteness condition in part~\ref{1_part:borel-cantelli_lemma_2} that is automatically satisfied for probability measures.

\begin{lemma}[Borel-Cantelli lemma]\label{1_res:borel-cantelli_lemma}
Let $\msm$ be a measure space, and let $\seq{\mss}$ be a sequence in $\alg$. Suppose that $\msstwo_k\coloneqq\bigcup_{n=k}^\infty \mss_n\in\alg$ for all $k\geq 1$, and that $\msstwo\coloneqq\bigcap_{k=1}^\infty \msstwo_k= \bigcap_{k=1}^\infty\bigcup_{n=k}^\infty \mss_n\in\alg$.
\begin{enumerate}
  \item\label{1_part:borel-cantelli_lemma_1}
  If $\bigvee_{N=1}^\infty\sum_{n=1}^N\npm(\mss_n)\in\os$, then $\npm(\msstwo)=0$.
  \item\label{1_part:borel-cantelli_lemma_2}
  If $\npm\left(\left(\bigcup_{n=1}^\infty \mss_n\right)\setminus \msstwo\right)\in\os$, then $\npm(\msstwo)\geq x$ in $\osext$ for all $ x\in \os$ with the property that $\npm(\mss_n)\geq x$ for all $n\geq 1$.
  \end{enumerate}
\end{lemma}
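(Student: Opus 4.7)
The plan is to apply the monotone convergence results (\cref{1_res:monotone_increasing_theorem_for_m,1_res:monotone_decreasing_theorem_for_m}) to the decreasing sequence $\msstwo_k\downarrow\msstwo$, combined with the subadditivity estimate in part~\ref{1_part:all_to_be_expected_5} of \cref{1_res:all_to_be_expected} and the arithmetic of suprema from \cref{1_res:operations_in_extended_space}. Set $S_N\coloneqq\sum_{n=1}^N\npm(\mss_n)$ with $S_0\coloneqq 0$, and $S\coloneqq\bigvee_{N=1}^\infty S_N$ in $\osext$. For part~\ref{1_part:borel-cantelli_lemma_1}, $S\in\os$ by assumption, so each $S_N\in\os$ and $S_N\uparrow S$ in $\os$. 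Applying part~\ref{1_part:all_to_be_expected_5} of \cref{1_res:all_to_be_expected} to $\msstwo_k=\bigcup_{n=k}^\infty\mss_n$ gives
\[
\npm(\msstwo_k)\leq \bigvee_{N=k}^\infty\sum_{n=k}^N\npm(\mss_n)=\bigvee_{N=k}^\infty (S_N-S_{k-1})=S-S_{k-1},
\]
where the last two equalities use that each $S_N\in\os$ together with part~\ref{1_part:operations_in_extended_space_5} of \cref{1_res:operations_in_extended_space}. A short argument---if $y\in\osext$ is a lower bound of $\{S-S_{k-1}:k\geq 1\}$, then $y+S_{k-1}\leq S$ for all $k$, so part~\ref{1_part:operations_in_extended_space_5} of \cref{1_res:operations_in_extended_space} yields $y+S\leq S$, whence $y\leq 0$---shows $S-S_{k-1}\downarrow 0$ in $\os$. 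In particular, $\npm(\msstwo_1)\leq S\in\os$, so \cref{1_res:monotone_decreasing_theorem_for_m} gives $\npm(\msstwo_k)\downarrow\npm(\msstwo)$ in $\os$. Since $0\leq\npm(\msstwo)\leq\npm(\msstwo_k)\leq S-S_{k-1}$ and $S-S_{k-1}\downarrow 0$, taking infima forces $\npm(\msstwo)=0$.

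For part~\ref{1_part:borel-cantelli_lemma_2}, write $A\coloneqq\bigcup_{n=1}^\infty\mss_n$, so that $A=(A\setminus\msstwo)\cup\msstwo$ is a disjoint decomposition and $\npm(A)=\npm(A\setminus\msstwo)+\npm(\msstwo)$ in $\osext$ by part~\ref{1_part:all_to_be_expected_1} of \cref{1_res:all_to_be_expected}. Since $\msstwo_k\supseteq\mss_k$, monotonicity (part~\ref{1_part:all_to_be_expected_2} of \cref{1_res:all_to_be_expected}) yields $\npm(\msstwo_k)\geq\npm(\mss_k)\geq x$ for all $k\geq 1$. Split on whether $\npm(A)$ is finite. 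If $\npm(A)=\largest$, then because $\npm(A\setminus\msstwo)\in\os$ by hypothesis, the sum $\npm(A\setminus\msstwo)+\npm(\msstwo)$ can equal $\largest$ only when $\npm(\msstwo)=\largest$, in which case $\npm(\msstwo)\geq x$ holds trivially. If $\npm(A)\in\os$, then $\msstwo_k\subseteq A$ gives $\npm(\msstwo_k)\in\os$ for every $k$; \cref{1_res:monotone_decreasing_theorem_for_m} applied to $\msstwo_k\downarrow\msstwo$ yields $\npm(\msstwo_k)\downarrow\npm(\msstwo)$ in $\os$, so $\npm(\msstwo)=\bigwedge_k\npm(\msstwo_k)$, and since $x$ is a lower bound of this sequence, $x\leq\npm(\msstwo)$.

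The proof is essentially the classical one; the only real subtlety is careful bookkeeping in $\osext$. The identification $\bigvee_{N\geq k}(S_N-S_{k-1})=S-S_{k-1}$ and the deduction $S-S_{k-1}\downarrow 0$ from $S_{k-1}\uparrow S$ both rely on $S_{k-1}$ being finite so that it possesses an additive inverse in $\osext$; without the hypothesis $S\in\os$ in part~\ref{1_part:borel-cantelli_lemma_1} these manipulations break down. Similarly, the case split in part~\ref{1_part:borel-cantelli_lemma_2} is necessary because $\npm(A\setminus\msstwo)\in\os$ does not by itself force $\npm(A)\in\os$, so the case $\npm(A)=\largest$ must be handled separately using the convention that $\largest\geq x$ for all $x\in\osext$.
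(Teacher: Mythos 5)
Your proof is correct. Part~\ref{1_part:borel-cantelli_lemma_1} is essentially the paper's argument in mirror image: the paper derives $\sum_{n=1}^{k-1}\npm(\mss_n)\leq S-\npm(\msstwo)$ and takes the supremum over $k$ to force $\npm(\msstwo)\leq 0$, whereas you derive $\npm(\msstwo)\leq S-S_{k-1}$ and take the infimum over $k$; both hinge on the same two ingredients, namely the $\sigma$-sub-additivity bound $\npm(\msstwo_k)\leq\bigvee_{N=k}^\infty\sum_{n=k}^N\npm(\mss_n)$ and the cancellation of finite elements in $\osext$. (Your appeal to \cref{1_res:monotone_decreasing_theorem_for_m} in this part is superfluous, since $\npm(\msstwo)\leq\npm(\msstwo_k)$ already follows from monotonicity.) Part~\ref{1_part:borel-cantelli_lemma_2} is where you genuinely diverge: you split on whether $\npm\left(\bigcup_{n=1}^\infty\mss_n\right)$ is finite, handling the infinite case by the observation that a sum with one finite summand can only be $\largest$ if the other summand is, and the finite case by applying \cref{1_res:monotone_decreasing_theorem_for_m} directly to $\msstwo_k\downarrow\msstwo$. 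The paper instead avoids any case distinction by decomposing $\msstwo_k=(\msstwo_k\setminus\msstwo)\cup\msstwo$, applying \cref{1_res:monotone_decreasing_theorem_for_m} to $\msstwo_k\setminus\msstwo\downarrow\emptyset$ (which is exactly what the finiteness hypothesis licenses) to get $\bigwedge_{k}\npm(\msstwo_k\setminus\msstwo)=0$, and then using part~\ref{1_part:binary_operations_in_extended_space_2} of \cref{1_res:binary_operations_in_extended_space} to conclude $\npm(\msstwo)=\bigwedge_k\npm(\msstwo_k)\geq x$ uniformly in $\osext$. Your route is a little longer but arguably more transparent; the paper's buys a single argument valid whether or not $\npm(\msstwo)$ is finite, which also shows that the case split you describe as necessary is in fact avoidable.
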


Note that in part~\ref{1_part:borel-cantelli_lemma_2} it is not asserted that $\npm(\msstwo)$ is finite.

\begin{proof}
We start by proving part~\ref{1_part:borel-cantelli_lemma_1}. It is clear from the fact that $\msstwo\subseteq \msstwo_k$ and \cref{1_res:all_to_be_expected} that
\begin{equation}\label{1_eq:borel-cantelli_1}
\npm(\msstwo)\leq\npm(\msstwo_k)\leq\bigvee_{N=k}^\infty\sum_{n=k}^N\npm(\mss_n)
\end{equation}
in $\osext$ for all $k\geq 1$.

Furthermore, for all $k\geq 2$, we have

\begin{align*}
\bigvee_{N=1}^\infty\sum_{n=1}^N\npm(\mss_n)&=\bigvee_{N=k}^\infty\sum_{n=1}^N\npm(\mss_n)\\
&=\bigvee_{N=k}^\infty\left(\sum_{n=1}^{k-1}\npm(\mss_n)+\sum_{n=k}^N\npm(\mss_n)\right)\\
&=\sum_{n=1}^{k-1}\npm(\mss_n)+\bigvee_{N=k}^\infty\sum_{n=k}^N\npm(\mss_n)
\end{align*}
in $\osext$. Since $\bigvee_{N=k}^\infty\sum_{n=k}^N\npm(\mss_n)$ is finite for all $k\geq 2$, combination with \cref{1_eq:borel-cantelli_1} yields that
\[
\sum_{n=1}^{k-1}\npm(\mss_n)\leq \bigvee_{N=1}^\infty\sum_{n=1}^N\npm(\mss_n)-\npm(\msstwo)
\]
in $\osext$ for all $k\geq 2$. Hence
\[
\bigvee_{N=1}^\infty\sum_{n=1}^N\npm(\mss_n)\leq \bigvee_{N=1}^\infty\sum_{n=1}^N\npm(\mss_n)-\npm(\msstwo)
\]
in $\osext$. Since $\bigvee_{N=1}^\infty\sum_{n=1}^N\npm(\mss_n)$ is finite, we see that $\npm(\msstwo)\leq 0$, and we conclude that $\npm(\msstwo)=0$.

We turn to part~\ref{1_part:borel-cantelli_lemma_2}. Since clearly $\npm(\msstwo_k)\geq \npm(\mss_k)\geq x$ for all $k\geq 1$, we have
\begin{equation}\label{1_eq:borel-cantelli_2}
\bigwedge_{k=1}^\infty \npm(\msstwo_k)\geq x,
\end{equation}
where we note that the infimum in the left hand side exists since $\npm(\msstwo_k)\downarrow$.

Since $\msstwo_k\setminus \msstwo\downarrow\emptyset$ and since $\npm(\msstwo_1\setminus \msstwo)=\npm\left(\bigcup_{n=1}^\infty \mss_n\setminus \msstwo\right)\in\os$ by assumption, \cref{1_res:monotone_decreasing_theorem_for_m} implies that
\begin{equation}\label{1_eq:borel-cantelli_3}
\bigwedge_{k=1}^\infty\npm(\msstwo_k\setminus \msstwo)=0.
\end{equation}

Combining \cref{1_eq:borel-cantelli_2,1_eq:borel-cantelli_3} with part~\ref{1_part:binary_operations_in_extended_space_2} of \cref{1_res:binary_operations_in_extended_space}, we see that
\[
\npm(\msstwo)=\npm(\msstwo)+\bigwedge_{k=1}^\infty\npm(\msstwo_k\setminus \msstwo)=\bigwedge_{k=1}^\infty\left(\npm(\msstwo)+\npm(\msstwo_k\setminus \msstwo)\right)=\bigwedge_{k=1}^\infty\npm(\msstwo_k)\geq x
\]
in $\osext$.
\end{proof}

\section{$\pososext$-valued outer measures}
\label{1_sec:outer_measures}

\noindent One of the Riesz representation theorems for positive operators in \cite{de_jeu_jiang:2021b}  is established using vector-valued outer measures. The present short section, which will not be used in the later sections of the present paper, contains the necessary preparations for this. It is a modest modification of
\cite[Section~14]{aliprantis_burkinshaw_PRINCIPLES_OF_REAL_ANALYSIS_THIRD_EDITION:1998}.

Throughout this section, $\pset$ is a set and $\os$ is a \smc\ partially ordered vector space.

We begin with our definition of an $\pososext$-valued outer measure.

\begin{definition}\label{1_def:outer_measure}
A map $\pom: 2^{\pset}\to\pososext$ is called an \emph{$\pososext$-valued outer measure} if
\begin{enumerate}
\item\label{1_part:outer_measure_1}
$\pom(\emptyset)=0$;
\item\label{1_part:outer_measure_2}
$\pom(\mss_1)\leq\pom(\mss_2)$ in $\osext$ for all $\mss_1,\mss_2\in 2^{\pset}$ such that $\mss_1\subseteq \mss_2$;
\item\label{1_part:outer_measure_3}
for every sequence $\seq{\mss}$ of subsets of $\pset$,
\begin{equation*}
\pom\left(\bigcup_{n=1}^\infty \mss_n\right)\leq\bigvee_{N=1}^\infty \sum_{n=1}^N\pom(\mss_n)
\end{equation*}
in $\osext$.
\end{enumerate}
\end{definition}

The combination of the parts~\ref{1_part:outer_measure_1} and~\ref{1_part:outer_measure_3} shows that $\pom$ is not only $\sigma$-sub-additive, but also finitely sub-additive.

As in the real case, a measure can be found from an outer measure. The proofs need only minor modifications.

\begin{definition}\label{1_def:outer_measurable_sets}
A subset $\mss$ of $\pset$ is called \emph{$\pom$-measurable} if, for all $\msstwo\subseteq \pset$,
\begin{equation}\label{1_eq:outer_measurable_sets}
\pom(\msstwo)=\pom(\msstwo\cap\mss)+\pom(\msstwo\cap\mss^\comp)
\end{equation}
in $\osext$.
\end{definition}

We let $\M$ denote the collection of all $\pom$-measurable subsets of $\pset$. Obviously, $\emptyset,\ \pset\in\M$. and equally obviously $\M$ is invariant under the taking of complements. We shall proceed to show that $\M$ is a $\sigma$-algebra, and that the restriction of $\pom$ to $\M$ is an $\pososext$-valued measure.

\begin{lemma}\label{1_res:almost_finite_additivity_of_outer_measure_on_M}
Let $\mss_1,\mss_2$ be subsets of $\pset$ such that $\mss_1\in \M$ and $\mss_1\cap\mss_2=\emptyset$. Then, for any subset $\msstwo$ of $\pset$,
\[
\pom\big(\msstwo\cap(\mss_1\cup\mss_2)\big)=\pom(\msstwo\cap\mss_1)+\pom(\msstwo\cap\mss_2)
\]
in $\osext$.
\end{lemma}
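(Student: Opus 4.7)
The plan is to apply the definition of $\pom$-measurability of $\mss_1$ directly to the test set $\msstwo \cap (\mss_1 \cup \mss_2)$, and then simplify the two resulting intersections using the hypothesis that $\mss_1 \cap \mss_2 = \emptyset$.

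More precisely, I would set $\msstwo' \coloneqq \msstwo \cap (\mss_1 \cup \mss_2)$ and, since $\mss_1 \in \M$, invoke \cref{1_eq:outer_measurable_sets} with $\msstwo'$ in the role of the test set to obtain
\[
\pom(\msstwo') = \pom(\msstwo' \cap \mss_1) + \pom(\msstwo' \cap \mss_1^\comp)
\]
in $\osext$. The proof then reduces to two purely set-theoretic identities. Firstly, $\msstwo' \cap \mss_1 = \msstwo \cap (\mss_1 \cup \mss_2) \cap \mss_1 = \msstwo \cap \mss_1$, because $\mss_1 \subseteq \mss_1 \cup \mss_2$. Secondly, because $\mss_1 \cap \mss_2 = \emptyset$, we have $\mss_2 \subseteq \mss_1^\comp$, so that $(\mss_1 \cup \mss_2) \cap \mss_1^\comp = \mss_2$, and therefore $\msstwo' \cap \mss_1^\comp = \msstwo \cap \mss_2$. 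Substituting these two identities into the displayed equation yields precisely the asserted equality.

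There is essentially no analytic obstacle here; the statement is a direct consequence of \cref{1_def:outer_measurable_sets} together with two elementary set identities, and no use is made of \smc ness or any of the more delicate features of the extended space $\osext$ beyond the fact that addition in $\osext$ is already defined for arbitrary elements of $\pososext$. The only mild point to be careful about is that the equality holds in $\osext$ (one or both summands may be $\largest$), but this causes no difficulty since the corresponding identity for $\pom(\msstwo')$ from the measurability of $\mss_1$ is itself an identity in $\osext$.
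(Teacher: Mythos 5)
Your proposal is correct and is essentially the paper's own proof: both apply the $\pom$-measurability of $\mss_1$ with $\msstwo\cap(\mss_1\cup\mss_2)$ as the test set and then simplify via $(\mss_1\cup\mss_2)\cap\mss_1=\mss_1$ and $(\mss_1\cup\mss_2)\cap\mss_1^\comp=\mss_2$. Your closing remark that the identity lives in $\osext$ and needs no completeness hypotheses is accurate.
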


\begin{proof}
Using that $\mss_1$ is $\pom$-measurable and that $(\mss_1\cup\mss_2)\cap\mss_1^\comp=\mss_2$, we see that,  for any $\msstwo\subseteq\pset$
\begin{align*}
\pom\big(\msstwo\cap(\mss_1\cup\mss_2)\big)
&=\pom\big(\left[\msstwo\cap(\mss_1\cup\mss_2)\right]\cap\mss_1\big)+\pom\big(\left[\msstwo\cap(\mss_1\cup\mss_2)\right]\cap\mss_1^\comp \big)\\
&=\pom(\msstwo\cap\mss_1)+\pom(\msstwo\cap\mss_2).
\end{align*}
\end{proof}

Applying \cref{1_res:almost_finite_additivity_of_outer_measure_on_M} for $\msstwo=\pset$ yields the following.

\begin{corollary}\label{1_res:finite_additivity_of_outer_measure_on_M}
Let $\mss_1,\mss_2$ be subsets of $\pset$ such that $\mss_1\in \M$ and $\mss_1\cap\mss_2=\emptyset$.  Then $\pom(\mss_1\cup\mss_2)=\pom(\mss_1)+\pom(\mss_2)$ in $\osext$.
\end{corollary}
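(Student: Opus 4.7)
The plan is to simply specialise the previous lemma to the case $\msstwo=\pset$; the paper itself indicates this is the intended route. Concretely, I would invoke \cref{1_res:almost_finite_additivity_of_outer_measure_on_M} with $\msstwo\coloneqq\pset$, using the same hypotheses on $\mss_1$ and $\mss_2$ (namely $\mss_1\in\M$ and $\mss_1\cap\mss_2=\emptyset$).

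Next, I would simplify the three expressions that appear in the identity supplied by the lemma. Since $\pset$ is the ambient set, $\pset\cap(\mss_1\cup\mss_2)=\mss_1\cup\mss_2$, $\pset\cap\mss_1=\mss_1$, and $\pset\cap\mss_2=\mss_2$. Substituting these back in yields
\[
\pom(\mss_1\cup\mss_2)=\pom(\mss_1)+\pom(\mss_2)
\]
in $\osext$, which is precisely the claim.

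There is no real obstacle here: the work has already been done in \cref{1_res:almost_finite_additivity_of_outer_measure_on_M}, and it suffices to notice that the right-hand side of \cref{1_eq:outer_measurable_sets}, once instantiated at $\msstwo=\pset$, collapses exactly to the sum $\pom(\mss_1)+\pom(\mss_2)$ in the extended monoid $\osext$ (so no issues with subtraction or infinite values arise). The only minor point to keep in mind is that the addition takes place in $\osext$ rather than in $\os$, so the equality holds in $\osext$ even if one or both of the outer measure values happen to equal $\largest$.
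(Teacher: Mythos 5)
Your proposal is correct and is exactly the paper's own argument: the corollary is obtained by applying \cref{1_res:almost_finite_additivity_of_outer_measure_on_M} with $\msstwo=\pset$ and simplifying the intersections. (The only cosmetic slip is that the identity you instantiate is the one in that lemma, not \cref{1_eq:outer_measurable_sets}, which is the definition of $\pom$-measurability.)
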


\begin{theorem}\label{1_res:the_outer_measurable_sets_with_the_outer measure_is_a_measure_space}
Let $\pset$ be a set, let $\os$ be a \smc\ partially ordered vector space, and let $\pom\colon2^{\pset}\to\pososext$ be an $\pososext$-valued outer measure.

Then the set $\M$ of $\pom$-measurable subsets of $\pset$ is a $\sigma$-algebra. Furthermore,  the restriction of $\pom$ to $\M$ is an $\pososext$-valued measure on $\M$.
\end{theorem}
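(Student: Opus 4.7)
The strategy is the standard Carath\'eodory construction, suitably transcribed from limits to suprema in $\osext$. The definition of $\pom$-measurability is symmetric in $\mss$ and $\mss^\comp$, so $\M$ is immediately closed under complementation, and $\emptyset,\pset\in\M$ is clear. The plan thus has three substantive parts: closure under finite unions, closure under countable disjoint unions (which, combined with closure under complements, upgrades the algebra to a $\sigma$-algebra), and countable additivity of $\pom\mid_{\M}$.

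For finite unions, I take $\mss_1,\mss_2\in\M$ and an arbitrary test set $\msstwo\subseteq\pset$. Applying \cref{1_eq:outer_measurable_sets} first with $\mss_1$ on $\msstwo$, then with $\mss_2$ on $\msstwo\cap\mss_1^\comp$, decomposes $\pom(\msstwo)$ into four pieces indexed by the cells of the partition generated by $\mss_1,\mss_2$. Three of these cells make up $\msstwo\cap(\mss_1\cup\mss_2)$, so by subadditivity their sum dominates $\pom(\msstwo\cap(\mss_1\cup\mss_2))$; the fourth is exactly $\pom(\msstwo\cap(\mss_1\cup\mss_2)^\comp)$. Combining with the reverse direction that comes from the subadditivity axiom~\ref{1_part:outer_measure_3} gives equality, so $\mss_1\cup\mss_2\in\M$.

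For a pairwise disjoint sequence $\seq{\mss}\subseteq\M$ with union $\mss$, I first iterate \cref{1_res:almost_finite_additivity_of_outer_measure_on_M} to obtain, for every $\msstwo\subseteq\pset$ and every $N\geq 1$,
\[
\pom\!\left(\msstwo\cap\bigcup_{n=1}^N\mss_n\right)=\sum_{n=1}^N\pom(\msstwo\cap\mss_n)
\]
in $\osext$. Since $\bigcup_{n=1}^N\mss_n\in\M$ by the previous paragraph, measurability of this finite union together with monotonicity of $\pom$ (using $\mss^\comp\subseteq\left(\bigcup_{n=1}^N\mss_n\right)^\comp$) yields
\[
\pom(\msstwo)\geq\sum_{n=1}^N\pom(\msstwo\cap\mss_n)+\pom(\msstwo\cap\mss^\comp)
\]
for each $N$. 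Now the crucial move is to take the supremum over $N$ in $\osext$: by part~\ref{1_part:operations_in_extended_space_5} of \cref{1_res:operations_in_extended_space}, the constant term $\pom(\msstwo\cap\mss^\comp)$ passes through the supremum, so
\[
\pom(\msstwo)\geq\bigvee_{N=1}^\infty\sum_{n=1}^N\pom(\msstwo\cap\mss_n)+\pom(\msstwo\cap\mss^\comp)\geq\pom(\msstwo\cap\mss)+\pom(\msstwo\cap\mss^\comp),
\]
where the last inequality is countable subadditivity applied to $\msstwo\cap\mss=\bigcup_n(\msstwo\cap\mss_n)$. The reverse inequality is, again, subadditivity of $\pom$, so $\mss\in\M$. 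Combined with complementation and finite unions, this shows $\M$ is a $\sigma$-algebra.

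Finally, countable additivity of $\pom\mid_{\M}$ on a disjoint sequence $\seq{\mss}\subseteq\M$ follows by specialising $\msstwo=\mss=\bigcup_n\mss_n$ in the displayed identity from the previous paragraph and combining with subadditivity: one obtains $\pom(\mss)=\bigvee_{N=1}^\infty\sum_{n=1}^N\pom(\mss_n)$, which is the defining property of an $\pososext$-valued measure. The main obstacle, compared with the classical real case, is purely notational vigilance: every manipulation of the form ``add a term to a supremum'' or ``push a supremum through addition'' must be justified by \cref{1_res:operations_in_extended_space} or \cref{1_res:binary_operations_in_extended_space}, because $\osext$ is only a partially ordered commutative monoid with a formal $\largest$, not a topological lattice. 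Provided this bookkeeping is done, no new ideas beyond the classical Carath\'eodory argument are needed.
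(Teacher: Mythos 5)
Your plan is correct and follows essentially the same route as the paper: the Carath\'eodory argument with limits replaced by suprema in $\osext$, the same disjointification, the same use of \cref{1_res:almost_finite_additivity_of_outer_measure_on_M}, and the same appeal to part~\ref{1_part:operations_in_extended_space_5} of \cref{1_res:operations_in_extended_space} to move the constant term $\pom(\msstwo\cap\mss^\comp)$ past the supremum over $N$. The only (immaterial) divergence is in the finite-union step, where the paper obtains an exact equality from a four-term expansion using only the measurability of $\mss_1$ and $\mss_2$, whereas you prove one inequality from a decomposition into three cells plus finite subadditivity and get the reverse from subadditivity again.
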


\begin{proof}
We start by proving that $\M$ is a $\sigma$-algebra. We have already observed that $\emptyset\in\M$ and that $\M$ is invariant under the taking of complements, so it remains to be shown that $\M$ is invariant under the taking of countable unions.

We show first that $\M$ is invariant under the taking of finite unions. Let $\mss_1,\mss_2$ in $\M$. Set $\mss\coloneqq\mss_1\cup\mss_2$. Using the $\pom$-measurability of $\mss_1$ for the first and the fourth equality, and that of $\mss_2$ for the third equality, we have, for any $\msstwo\subseteq\pset$,
\begin{align*}
\pom(\msstwo\cap\mss)+\pom(\msstwo\cap\mss^\comp)
&=\pom\big(\left[\msstwo\cap\mss\right]\cap\mss_1\big)+\pom\big(\left[\msstwo\cap\mss\right]\cap\mss_1^\comp\big)\\
&\quad\quad+
\pom\big(\left[\msstwo\cap\mss^\comp\right]\cap\mss_1\big)+\pom\big(\left[\msstwo\cap\mss^\comp\right]\cap\mss_1^\comp\big)\\
&=\pom(\msstwo\cap\mss_1) + \pom\big(\left[\msstwo\cap\mss_1^\comp\right]\cap\mss_2\big)\\
&\quad\quad + \pom(\emptyset) + \pom\big(\left[\msstwo\cap\mss_1^\comp\right]\cap\mss_2^\comp\big)\\
&=\pom(\msstwo\cap\mss_1)+\pom(\msstwo\cap\mss_1^\comp)\\
&=\pom(\msstwo)
\end{align*}
in $\osext$. Hence $\mss_1\cup\mss_2\in\M$, as desired. It follows that $\M$ is closed under the taking of finite unions.

Now suppose that $\seq{\mss}$ is a sequence in $\M$. We are to show that $\bigcup_{n=1}^\infty\mss_n$ is $\pom$-measurable. Replacing $\mss_n$ with $\mss_n\setminus\bigcup_{k=1}^n\mss_k$ for $n\geq 2$, which we now already know to be $\pom$-measurable, we may and shall suppose that the $\mss_n$ are pairwise disjoint. Set $\mssthree\coloneqq\bigcup_{n=1}^\infty\mss_n$, and $\mssthree_N\coloneqq\bigcup_{n=1}^N\mss_n$ for $N\geq 1$; then the $\mssthree_N$ are  $\pom$-measurable.  Using the $\pom$-measurability of $\mssthree_N$ in the first step, the monotonicity of $\pom$ in the second step, and \cref{1_res:almost_finite_additivity_of_outer_measure_on_M} and the fact that $\M$ is closed under the taking of finite unions in the third step, we see that, for all $N\geq 1$ and $\msstwo\subseteq \pset$,
\begin{align*}
\pom(\msstwo)
&=\pom(\msstwo\cap \mssthree_N)+\pom(\mss\cap \mssthree_N^\comp)\\
&\geq\pom(\msstwo\cap \mssthree_N)+\pom(\mss\cap \mssthree^\comp)\\
&=\sum_{n=1}^N\pom(\msstwo\cap\mss_n)+\pom(\mss\cap \mssthree^\comp)
\end{align*}
in $\osext$. Since this is true for each $N\geq 1$, a combination with the $\sigma$-sub-additivity of $\pom$ implies that, for any $\msstwo\subseteq\pset$,
\begin{align*}
\pom(\msstwo)
&\geq\bigvee_{N=1}^\infty\sum_{n=1}^N\pom(\msstwo\cap\mss_n)+\pom(\msstwo\cap \mssthree^\comp)\\
&\geq\pom\left(\bigcup_{n=1}^\infty (\msstwo\cap\mss_n)\right)+\pom(\msstwo\cap \mssthree^\comp)\\
&=\pom(\msstwo\cap \mssthree)+\pom(\msstwo\cap \mssthree^\comp)\\
&\geq\pom(\msstwo)
\end{align*}
in $\osext$. Hence $\mssthree\in\M$, as desired.

We shall now show that $\pom$ is $\sigma$-additive on $\M$. Let $\seq{\mss}$ be a pairwise disjoint sequence in $\M$.  The monotonicity of $\pom$ and the finite additivity of $\pom$ on $\M$ that follows from  \cref{1_res:finite_additivity_of_outer_measure_on_M} show that, for all $N\geq 1$,
\begin{align*}
\pom\left(\bigcup_{n=1}^\infty\mss_n\right)&\geq\pom\left(\bigcup_{n=1}^N\mss_n\right)\\
&=\sum_{n=1}^N\pom(\mss_n)
\end{align*}
in $\osext$. Hence $\pom\left(\bigcup_{n=1}^\infty\mss_n\right)\geq\bigvee_{N=1}^\infty \sum_{n=1}^N\pom(\mss_n)$. Since the reverse inequality holds by the $\sigma$-sub-additivity of $\pom$, we see that $\pom$ is $\sigma$-additive on $\M$.
\end{proof}

The following basic property carries over as well from the real case. It implies that the restriction of $\pom$ to the $\pom$-measurable subsets of $\pset$ is a complete $\pososext$-valued measure.

\begin{lemma}\label{1_res:null_set_is_outer_measurable}
Let $\mss$ be a subset of $\pset$ such that $\pom(\mss)=0$. Then $\mss$ is $\pom$-measurable.
\end{lemma}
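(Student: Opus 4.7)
The plan is to verify \cref{1_eq:outer_measurable_sets} for an arbitrary $\msstwo\subseteq\pset$ by establishing inequalities in both directions in $\osext$, one coming from the $\sigma$-sub-additivity of $\pom$ (applied to a two-term sequence padded with empty sets), and the other from the monotonicity of $\pom$ combined with the hypothesis $\pom(\mss)=0$.

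First I would observe that, since $\msstwo\cap\mss\subseteq\mss$, part~\ref{1_part:outer_measure_2} of \cref{1_def:outer_measure} gives $\pom(\msstwo\cap\mss)\leq\pom(\mss)=0$ in $\osext$, so in fact $\pom(\msstwo\cap\mss)=0$. Next, using the decomposition $\msstwo=(\msstwo\cap\mss)\cup(\msstwo\cap\mss^\comp)$ together with part~\ref{1_part:outer_measure_3} of \cref{1_def:outer_measure} (taking $\mss_1\coloneqq\msstwo\cap\mss$, $\mss_2\coloneqq\msstwo\cap\mss^\comp$, and $\mss_n\coloneqq\emptyset$ for $n\geq 3$), we have
\[
\pom(\msstwo)\leq\pom(\msstwo\cap\mss)+\pom(\msstwo\cap\mss^\comp)
\]
in $\osext$; note that the supremum in the sub-additivity inequality collapses to this finite sum, since $\pom(\emptyset)=0$.

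For the reverse inequality, monotonicity gives $\pom(\msstwo\cap\mss^\comp)\leq\pom(\msstwo)$ in $\osext$. Adding $\pom(\msstwo\cap\mss)=0$ to the left hand side does not change it, so
\[
\pom(\msstwo\cap\mss)+\pom(\msstwo\cap\mss^\comp)=\pom(\msstwo\cap\mss^\comp)\leq\pom(\msstwo).
\]
Combining the two inequalities yields the required identity \cref{1_eq:outer_measurable_sets}, and hence $\mss\in\M$.

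There is really no serious obstacle here: the only point to watch is that all manipulations take place in $\osext$, where addition of $0$ behaves as usual and the order is compatible with addition as recorded in \cref{1_sec:partially_ordered_vector_spaces}. In particular, the fact that $\pom(\msstwo\cap\mss^\comp)$ might equal $\largest$ causes no problem, since $0+\largest=\largest$ and the inequality $\largest\leq\pom(\msstwo)$ forces $\pom(\msstwo)=\largest$ as well, giving equality in that case too.
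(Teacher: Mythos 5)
Your proof is correct and follows essentially the same route as the paper's: finite sub-additivity of $\pom$ (obtained from $\sigma$-sub-additivity by padding with empty sets, as the paper notes after \cref{1_def:outer_measure}) gives $\pom(\msstwo)\leq\pom(\msstwo\cap\mss)+\pom(\msstwo\cap\mss^\comp)$, and monotonicity together with $\pom(\mss)=0$ gives the reverse bound, sandwiching everything to equality. The paper compresses this into a single chain of inequalities, but the content is identical, and your explicit check that the case $\pom(\msstwo\cap\mss^\comp)=\largest$ causes no trouble is a harmless extra precaution.
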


\begin{proof}
If $\pom(\mss)=0$, then, for any subset $\msstwo$ of $\pset$,
\[
\pom(\msstwo)\leq\pom(\msstwo\cap\mss)+\pom(\msstwo\cap\mss^\comp)\leq\pom(\mss)+\pom(\msstwo)=\pom(\msstwo).
\]
\end{proof}

\section{Integration with respect to an $\pososext$-valued measure}\label{1_sec:integration_with_respect_to_a_positive_os-valued_measure}

\noindent In this section, we define the order integral with respect to $\pososext$-valued measures. After that, we proceed to establish the three basic convergence theorems: the monotone convergence theorem (see \cref{1_res:monotone_convergence_theorem}), Fatou's lemma (see \cref{1_res:fatou_lemma}), and the dominated convergence theorem (see \cref{1_res:dominated_convergence_theorem}).
The analogues of the classical $\lebfont L^1$- and $\Ell^1$-spaces are introduced and some of their vector lattice properties are investigated; see \cref{1_res:basic_properties_of_integrablefun} and \cref{1_res:ellone_is_dedekind_complete}.

\subsection{Order integrals }\label{1_subsec:integrals}

Let $\msm$ be a measure space, where $\alg$ is now a $\sigma$-algebra and not merely an algebra, $\os$ is a \smc\ partially ordered vector space, and $\npm:\alg\to\pososext$ is an $\pososext$-valued measure. In this section, we shall introduce an integral on suitable real-valued functions that corresponds to these data.\footnote{The \cref{1_res:integrals_of_elementary_functions,1_res:key_lemma_for_well-definednes_of_the_integral} are actually still valid when $\alg$ is an algebra and $\npm$ is finitely additive.} For some of the convergence theorems involving non-negative functions, such as the monotone convergence theorem, it is, in fact, more convenient (and more natural) to also allow functions taking values in the extended positive real numbers. This will, therefore, be our starting point.

While introducing some notation at the same time, we now start with the usual definitions and el\-e\-men\-ta\-ry results, referring to, e.g., \cite[p.~49-52]{bauer_MEASURE_AND_INTEGRATION_THEORY:2001} for details.

We supply $\posRext$ with the topology of the one-point compactification of $\posR$, and we let $\posextmeasfun$ denote the set of $\alg$--Borel-measurable functions $f:\pset\to\posRext$. A function $f:\pset\to\posRext$ is  $\alg$--Borel-measurable if and only if $\{\pt\in\pset : f(\pt) < r\}\in\alg$ for all (finite) $r\in\posR$.
The set $\posextmeasfun$ contains the pointwise sum, product, supremum, and infimum in $\posRext$ of two of its elements, and it is invariant under the pointwise action of $\posRext$. Every at most countable subset $\{f_n\: n\geq 1\}$ of $\posextmeasfun$ has a supremum and an infimum in $\posextmeasfun$, which is given by its pointwise supremum resp.\ infimum in $\posRext$. Hence the notation $f_n\uparrow f$ can be used to express a pointwise property in $\posRext$ as well as a fact in the partially ordered set  $\posextmeasfun$.

An element $\varphi$ of $\posextmeasfun$ is an \emph{elementary function} if it takes only finitely many values, which are all finite. 
When $S$ is a subset of $\pset$, then we let $\indicator{S}$ denote its indicator function, so that $\varphi$ can (non-uniquely) be written  as a finite sum $\varphi=\sum_{i=1}^n r_i\indicator{\mss_i}$ for some $n\geq 1$, $r_1,\dotsc,r_n\in\posR$, and $\mss_1,\dots,\mss_n\in\alg$. Here the $r_i$ are all finite, but it is allowed that $\npm(\mss_i)=\infty$ for some of the $\mss_i$. We let $\elemfun$ denote the set of elementary functions. It contains the pointwise sum, product, supremum, and infimum in $\posR$ of two of its elements, and it is invariant under the pointwise action of $\posR$.

If $\varphi=\sum_{i=1}^n r_i\indicator{\mss_i}$ is an elementary function, where the $\mss_i$ have been chosen to be pairwise disjoint, then we define its \emph{order integral}, which is an element of $\pososext$, by
\[
\ointm{\varphi}\coloneqq\sum_{i=1}^n r_i\npm(\mss_i),
\]
where $r_i\npm(\mss_i)$ refers to the action of $\posR$ as monoid ho\-mo\-mor\-phisms on $\osext$. We have added a superscript to indicate that the integral that we shall introduce for more general functions is defined using order properties. In contexts where $\os$ is, in fact, a partially ordered Banach space, integrals with respect to vector measures can then also be defined by using norm convergence rather than the ordering; this notation keeps the distinction clear. We shall return to the connection between these two types of measures and their integrals in \cref{1_sec:comparison_with_banach_space_case}.

The facts that $\npm$ is finitely additive and that $\posR$ acts as monoid ho\-mo\-mor\-phisms on $\osext$ imply that the integral does not depend on the choice for the pairwise disjoint $\mss_i$. The proof of this is exactly as the proof of \cite[Lemma~10.2]{bauer_MEASURE_AND_INTEGRATION_THEORY:2001} for $\os=\RR$. These two facts, combined with the fact that $(rs)x=r(sx)$ for all $r,\,s\in\posR$ and $x\in\osext$, also yield the following result, where the equalities and the inequality are in $\osext$.

\begin{lemma}\label{1_res:integrals_of_elementary_functions}
Let $\msm$ be a measure space, where $\alg$ is a $\sigma$-algebra.
Let $\varphi,\,\psi\in\elemfun$, and let $r\geq 0$. Then:
\begin{enumerate}
\item\label{1_part:integrals_of_elementary_functions_1}
$\ointm{r\varphi}=r\ointm{\varphi}$;
\item\label{1_part:integrals_of_elementary_functions_2} $\ointm{(\varphi+\psi)}=\ointm{\varphi}+\ointm{\psi}$;
\item\label{1_part:integrals_of_elementary_functions_3}
if $\varphi\leq\psi$, then $\ointm{\varphi}\leq\ointm{\psi}$.
\end{enumerate}
\end{lemma}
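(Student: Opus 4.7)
The plan is to reduce all three parts to manipulations on a common \emph{pairwise disjoint} refinement of the representations of $\varphi$ and $\psi$, and then appeal to (a) the finite additivity of $\npm$ on $\alg$ (a consequence of part~\ref{1_part:all_to_be_expected_1} of \cref{1_res:all_to_be_expected}), (b) the fact that $\posR$ acts as additive and multiplicative monoid homomorphisms on $\pososext$, and (c) the compatibility $rx\leq sy$ whenever $x\leq y$ in $\osext$ and $0\leq r\leq s$ in $\posR$, all of which were recorded in \cref{1_sec:partially_ordered_vector_spaces}. A preliminary observation is that any elementary function admits a disjoint representation whose underlying sets cover $\pset$: writing $\varphi=\sum_{i=1}^n r_i \indicator{\mss_i}$ with disjoint $\mss_i$, one may append the term $0\cdot \indicator{\pset\setminus\bigcup_i \mss_i}$; since $0\cdot x=0$ in $\pososext$ even when $\npm(\pset\setminus\bigcup_i\mss_i)=\largest$, this does not alter the integral by well-definedness.

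For part~\ref{1_part:integrals_of_elementary_functions_1}, I would write $\varphi=\sum_{i=1}^n r_i \indicator{\mss_i}$ in disjoint form, note that $r\varphi=\sum_{i=1}^n (rr_i)\indicator{\mss_i}$ is then again in disjoint form, and compute
\[
\ointm{r\varphi}=\sum_{i=1}^n (rr_i)\npm(\mss_i)=\sum_{i=1}^n r(r_i\npm(\mss_i))=r\sum_{i=1}^n r_i\npm(\mss_i)=r\ointm{\varphi},
\]
where the second equality uses $(rr_i)x=r(r_ix)$ for $x\in\pososext$ and the third uses that multiplication by $r$ is an additive monoid homomorphism on $\pososext$.

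For part~\ref{1_part:integrals_of_elementary_functions_2}, I would fix disjoint-covering representations $\varphi=\sum_{i=1}^n r_i \indicator{\mss_i}$ and $\psi=\sum_{j=1}^m s_j \indicator{\msstwo_j}$, and consider the pairwise intersections $\mss_i\cap\msstwo_j$, which form a disjoint family whose union equals $\pset$. On $\mss_i\cap\msstwo_j$ the sum $\varphi+\psi$ takes the constant value $r_i+s_j$, so $\varphi+\psi=\sum_{i,j}(r_i+s_j)\indicator{\mss_i\cap\msstwo_j}$ is a valid (disjoint) representation. By well-definedness and the additive-monoid-homomorphism property of scalar multiplication on $\pososext$,
\[
\ointm{(\varphi+\psi)}=\sum_{i,j}(r_i+s_j)\npm(\mss_i\cap\msstwo_j)=\sum_{i,j}r_i\npm(\mss_i\cap\msstwo_j)+\sum_{i,j}s_j\npm(\mss_i\cap\msstwo_j),
\]
and then, applying scalar distribution and finite additivity of $\npm$ (grouping the first sum over $j$ and the second over $i$), this equals $\sum_i r_i\npm(\mss_i)+\sum_j s_j\npm(\msstwo_j)=\ointm{\varphi}+\ointm{\psi}$.

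For part~\ref{1_part:integrals_of_elementary_functions_3}, I would use the same refinement. For each pair $(i,j)$ with $\mss_i\cap\msstwo_j\neq\emptyset$, picking any point $\pt\in\mss_i\cap\msstwo_j$ gives $r_i=\varphi(\pt)\leq\psi(\pt)=s_j$, and the compatibility of the action of $\posR$ with the ordering on $\pososext$ then yields $r_i\npm(\mss_i\cap\msstwo_j)\leq s_j\npm(\mss_i\cap\msstwo_j)$ in $\osext$; when the intersection is empty both sides are $0$. Summing over $i,j$ and using finite additivity of $\npm$ as in the previous part gives $\ointm{\varphi}\leq\ointm{\psi}$. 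The only point requiring mild care throughout is that some $\npm(\mss_i)$ may equal $\largest$, but since every equality above involves only finitely many terms and since the binary operations on $\osext$ have been shown to be associative, commutative, and ordered-compatible in \cref{1_sec:partially_ordered_vector_spaces}, the presence of $\largest$ poses no genuine obstacle; this is the mildest technical wrinkle and the only place where the proof deviates from the classical scalar argument.
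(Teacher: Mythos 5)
Your proposal is correct and follows exactly the classical route that the paper itself invokes: the paper gives no explicit proof of this lemma, referring instead to the standard argument for $\os=\RR$ in Bauer, which is precisely your common-disjoint-refinement computation combined with the finite additivity of $\npm$ and the monoid-homomorphism and order-compatibility properties of the $\posR$-action on $\pososext$ recorded in \cref{1_sec:partially_ordered_vector_spaces}. Your explicit attention to the padding term $0\cdot\indicator{\pset\setminus\bigcup_i\mss_i}$ and to the possibility that $\npm(\mss_i)=\largest$ is exactly the ``mild adaptation'' the paper has in mind.
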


It follows from this that, for $\varphi\in\elemfun$, $\ointm{\varphi}=\sum_{i=1}^n r_i\npm(\mss_i)$ whenever $\varphi=\sum_{i=1}^n r_i\indicator{\mss_i}$ for not necessarily disjoint $\mss_i\in\alg$.
The proofs for all this are exactly as in \cite[p.~55-56]{bauer_MEASURE_AND_INTEGRATION_THEORY:2001}.

We shall define the order integral of an arbitrary $f\in\posextmeasfun$ in the natural way. To know that it is well defined, we need the following preparatory result. The proof is similar to that of \cite[Theorem~11.1]{bauer_MEASURE_AND_INTEGRATION_THEORY:2001} for the real case, but a comparison will show that it is still not a mere translation. The proof below for the general case will, in fact, enable one to argue that it is the Archimedean property of the real numbers that underlies the well-definedness of the integral also in this case, and not the continuity of the multiplication in the real numbers, as might be a possible interpretation of the proof of \cite[Theorem~11.1]{bauer_MEASURE_AND_INTEGRATION_THEORY:2001}.

\begin{lemma}\label{1_res:key_lemma_for_well-definednes_of_the_integral} Let $\msm$ be a measure space, where $\alg$ is a $\sigma$-algebra. Let $\varphi\in\elemfun$ and let $\seq{\varphi}\subseteq\elemfun$ be such that $\varphi_n\uparrow$ and $\varphi\leq\sup_{n\geq 1}\varphi_n$ pointwise in $\posRext$. Then
\[
\ointm{\varphi}\leq\bigvee_{n=1}^\infty\ointm{\varphi_n}
\]
in $\osext$.
\end{lemma}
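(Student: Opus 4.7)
The plan is to adapt the classical argument for the case $\os=\RR$, replacing the topological properties of $\RR$ by the order-theoretic tools collected in \cref{1_sec:partially_ordered_vector_spaces}. I would begin by fixing a pairwise-disjoint representation $\varphi=\sum_{i=1}^k r_i\indicator{\mss_i}$ with $r_i\in\posR$ and $\mss_i\in\alg$. For each $\alpha\in(0,1)$, I would introduce the auxiliary sets $B_n\coloneqq\{\pt\in\pset : \varphi_n(\pt)\geq\alpha\varphi(\pt)\}$. These are measurable because $\varphi_n-\alpha\varphi$ is an $\alg$-measurable real-valued function, they form an increasing sequence, and their union is all of $\pset$: the condition is trivial on $\{\varphi=0\}$, while on $\{\varphi>0\}$ the hypothesis $\sup_n\varphi_n(\pt)\geq\varphi(\pt)>\alpha\varphi(\pt)$ forces $\varphi_n(\pt)>\alpha\varphi(\pt)$ for all sufficiently large $n$.

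From the pointwise inequality $\alpha\varphi\indicator{B_n}\leq\varphi_n$, part~\ref{1_part:integrals_of_elementary_functions_3} of \cref{1_res:integrals_of_elementary_functions} will give
\[
\alpha\sum_{i=1}^k r_i\npm(\mss_i\cap B_n)=\ointm{\alpha\varphi\indicator{B_n}}\leq\ointm{\varphi_n}
\]
in $\osext$. I would then let $n\to\infty$ on the left-hand side. Since $\mss_i\cap B_n\uparrow\mss_i$, the rudimentary monotone convergence result \cref{1_res:monotone_increasing_theorem_for_m} yields $\npm(\mss_i\cap B_n)\uparrow\npm(\mss_i)$ in $\osext$. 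Multiplying each term by the scalar $r_i$ via part~\ref{1_part:operations_in_extended_space_7} of \cref{1_res:operations_in_extended_space} and then summing over $i$ by iterated application of part~\ref{1_part:binary_operations_in_extended_space_2} of \cref{1_res:binary_operations_in_extended_space}, I would obtain $\sum_i r_i\npm(\mss_i\cap B_n)\uparrow\ointm{\varphi}$ in $\osext$. A further invocation of part~\ref{1_part:operations_in_extended_space_7} of \cref{1_res:operations_in_extended_space} to pull the scalar $\alpha$ through the supremum then produces
\[
\alpha\,\ointm{\varphi}\leq\bigvee_{n=1}^\infty\ointm{\varphi_n}
\]
in $\osext$ for every $\alpha\in(0,1)$.

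The concluding step is to send $\alpha\uparrow 1$. If $\ointm{\varphi}=\largest$, then already $\alpha\largest=\largest$ forces the right-hand side to equal $\largest$, and we are done. Otherwise $\ointm{\varphi}\in\os$, and we may likewise assume that $\bigvee_n\ointm{\varphi_n}\in\os$ (else there is again nothing to prove), so that the entire argument now takes place in $\os$. Rewriting $\alpha\ointm{\varphi}=\ointm{\varphi}-(1-\alpha)\ointm{\varphi}$ and setting $\beta\coloneqq 1-\alpha$ rearranges the previous inequality into
\[
\ointm{\varphi}-\bigvee_{n=1}^\infty\ointm{\varphi_n}\leq\beta\,\ointm{\varphi}
\]
for every $\beta\in(0,1)$. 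Because $\os$ is automatically Archimedean (as recalled in \cref{1_sec:partially_ordered_vector_spaces} for any \smc\ space) and $\ointm{\varphi}\geq 0$, the infimum $\bigwedge\{\beta\,\ointm{\varphi} : \beta\in(0,1)\}$ equals $0$, and the desired inequality follows.

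I expect the main obstacle to be the second paragraph rather than the last one: the classical proof disposes of the supremum-passage in a single line using the continuity of addition and scalar multiplication in $\RR$, whereas here the passage must be carried out purely in terms of suprema in $\osext$, carefully combining parts of \cref{1_res:operations_in_extended_space,1_res:binary_operations_in_extended_space} and keeping track of when $\largest$ appears. The final Archimedean step is the essential point at which the \smc ness hypothesis on $\os$ is needed, and it replaces the usual \textquoteleft $\alpha\to 1$\textquoteright\ limit in the real-valued case.
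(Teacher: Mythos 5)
Your proposal is correct and follows essentially the same route as the paper's proof: the same sets $B_n=\{\varphi_n\geq(1-\varepsilon)\varphi\}$ (with $\alpha=1-\varepsilon$), the same passage to the supremum via \cref{1_res:monotone_increasing_theorem_for_m} together with parts~\ref{1_part:operations_in_extended_space_7} of \cref{1_res:operations_in_extended_space} and~\ref{1_part:binary_operations_in_extended_space_2} of \cref{1_res:binary_operations_in_extended_space}, and the same concluding appeal to the automatic Archimedean property of a \smc\ space. The only cosmetic difference is that you dispose of the case $\bigvee_n\ointm{\varphi_n}=\largest$ up front before rearranging, whereas the paper splits on whether $\ointm{\varphi}$ is finite; both handle the infinities correctly.
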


\begin{proof}
We can write $\varphi=\sum_{i=1}^mr_i \chi_{\mss_i}$ for some $r_i\in\posR$ and $\mss_i\in\alg$. Let $\varepsilon$ be fixed such that $0<\varepsilon<1$. For $n\geq 1$, set $\msstwo_n\coloneqq\{\pt\in\pset : \varphi_n(x)\geq (1-\varepsilon)\varphi(\pt)\}$. Then $\msstwo_n$ is measurable and $\msstwo_n\uparrow \pset$, since $\varphi\leq\sup_{n\geq 1}\varphi_n$ and $\varphi_n\uparrow$. This implies that $\msstwo_n\cap \mss_i\uparrow \mss_i$ for $i=1,\cdots,m$, and then \cref{1_res:monotone_increasing_theorem_for_m} shows that $\npm(\mss_i\cap \msstwo_n)\uparrow\npm(\mss_i)$ in $\osext$ for $i=1,\dotsc,m$.
Since $\varphi_n\geq(1-\varepsilon)\indicator{\msstwo_n}\varphi$, \cref{1_res:integrals_of_elementary_functions} shows that
\begin{equation}\label{1_eq:epsilon_equation}
\ointm{\varphi_n}\geq(1-\varepsilon)\ointm{\indicator{\msstwo_n}\varphi}
\end{equation}
for $n\geq 1$. Furthermore,
\[
\ointm{\indicator{\msstwo_n}\varphi}=\ointm{\sum_{i=1}^m r_i\indicator{\msstwo_n}\indicator{\mss_i}}=\ointm{\sum_{i=1}^m r_i\indicator{\msstwo_n\cap\mss_i}}=\sum_{i=1}^m{r_i\npm(\msstwo_n\cap\mss_i)}.
\]
Since $\msstwo_n\cap \mss_i\uparrow \mss_i$ for $i=1,\cdots,m$, part~\ref{1_part:operations_in_extended_space_7} of \cref{1_res:operations_in_extended_space} and part~\ref{1_part:binary_operations_in_extended_space_2} of \cref{1_res:binary_operations_in_extended_space} then yield that
\begin{align*}
\bigvee_{n=1}^\infty\ointm{\indicator{\msstwo_n}\varphi}&=\bigvee_{n=1}^\infty\sum_{i=1}^m r_i\npm(\msstwo_n\cap\mss_i) =\sum_{i=1}^m r_i\bigvee_{n=1}^\infty \npm(\msstwo_n\cap\mss_i)\\
&=\sum_{i=1}^m r_i\npm(\mss_i)=\ointm{\varphi}.
\end{align*}
Combining this with \cref{1_eq:epsilon_equation}, we see that
\[
\bigvee_{n=1}^\infty\ointm{\varphi_n}\geq(1-\varepsilon)\ointm{\varphi}.
\]
in $\osext$. If $\ointm{\varphi}=\largest$, then we take $\varepsilon=1/2$ to see that $\bigvee_{n=1}^\infty\ointm{\varphi_n}=\largest$; we then have equality in the lemma. If $\ointm{\varphi}$ is finite, then we can write
\[
\bigvee_{n=1}^\infty\ointm{\varphi_n}-\ointm{\varphi}\geq -\varepsilon\ointm{\varphi}.
\]
Since this is true for every $\varepsilon$ such that $0<\varepsilon<1$, this implies that
\[
\bigvee_{n=1}^\infty\ointm{\varphi_n}-\ointm{\varphi}\geq-\bigwedge_{k=2}^\infty\,\, \frac{1}{k}\ointm{\varphi}.
\]
Since $\os$ is Archimedean, the right hand side is zero. This concludes the proof.
\end{proof}

Suppose now that $\alg$ is a $\sigma$-algebra, and let $f\in\posextmeasfun$. There exists a sequence $\seq{\varphi}\subseteq\elemfun$ such that $\varphi_n\uparrow f$ pointwise in $\posRext$; see \cite[Proposition~2.1.7]{cohn_MEASURE_THEORY_BIRKHAUSER_REPRINT:1993}, for example.
We define the \emph{order integral of $f$}, which is an element of $\pososext$, by setting
\[
\ointm{f}\coloneqq \bigvee_{n=1}^\infty \ointm{\varphi_n}.
\]
Since $\ointm{\varphi_n}\uparrow$, the \smc ness of $\os$ guarantees that this supremum exists in $\osext$; see part~\ref{1_part:completeness_properties_of_extended_space_1} of \cref{1_res:completeness_properties_of_extended_space}. To show that this definition is independent of the choice for the sequence $\seq{\varphi}$, let $\seq{\psi}\subset\elemfun$ be a second sequence such that $\psi_n\uparrow f$. Then $\varphi_k\leq f=\sup_{n\geq 1}\psi_n$ pointwise in $\posRext$ for all $k\geq 1$, so that \cref{1_res:key_lemma_for_well-definednes_of_the_integral} shows that $\ointm{\varphi_k}\leq\bigvee_{n=1}^\infty \ointm{\psi_n}$ in $\osext$. Hence $\bigvee_{k=1}^\infty\ointm{\varphi_k}\leq\bigvee_{n=1}^\infty \ointm{\psi_n}$ in $\osext$. The reverse inequality is likewise true, and we conclude that $\ointm{f}$ is well defined as an element of $\pososext$.

The integral has the usual properties as in the next  result. We include the easy proofs for the sake of completeness. Given the \cref{1_res:operations_in_extended_space,1_res:binary_operations_in_extended_space}, the proof is analogous to that for the real case.

\begin{lemma}\label{1_res:integral_on_positive_functions_has_usual_properties}
Let $\msm$ be a measure space, where $\alg$ is a $\sigma$-algebra,
let $f_1,\,f_2\in\posextmeasfun$, and let $r_1,\,r_2\in\posR$. Then:
\begin{enumerate}
\item\label{1_part:integral_on_positive_functions_has_usual_properties_1} $\ointm{(r_1f_1+r_2f_2)}=r_1\ointm{f_1}+r_2\ointm{f_2}$ in $\osext$;
\item\label{1_part:integral_on_positive_functions_has_usual_properties_2}
If $f_1\leq f_2$ pointwise in $\posRext$, then $\ointm{f_1}\leq\ointm{f_2}$ in $\osext$.
\end{enumerate}
\end{lemma}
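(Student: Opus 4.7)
The plan is to prove both parts by approximating the functions involved by monotone increasing sequences of elementary functions and then reducing the desired identities and inequalities to the corresponding statements on elementary functions from \cref{1_res:integrals_of_elementary_functions}, together with the order-arithmetic manipulations in $\osext$ permitted by \cref{1_res:operations_in_extended_space,1_res:binary_operations_in_extended_space,1_res:completeness_properties_of_extended_space}. In both parts I would start by fixing sequences $\seq{\varphi},\seq{\psi}\subseteq\elemfun$ such that $\varphi_n\uparrow f_1$ and $\psi_n\uparrow f_2$ pointwise in $\posRext$; these exist by the standard approximation result cited just before \cref{1_res:integral_on_positive_functions_has_usual_properties}.

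For part~\ref{1_part:integral_on_positive_functions_has_usual_properties_1}, I would first observe that $r_1\varphi_n+r_2\psi_n\in\elemfun$ for every $n$, since $\elemfun$ is closed under sums and under the pointwise action of $\posR$, and that $r_1\varphi_n+r_2\psi_n\uparrow r_1 f_1+r_2 f_2$ pointwise in $\posRext$ by monotonicity and compatibility of the operations on $\posRext$. The definition of the order integral of an element of $\posextmeasfun$ then gives
\[
\ointm{(r_1f_1+r_2f_2)}=\bigvee_{n=1}^{\infty}\ointm{(r_1\varphi_n+r_2\psi_n)}.
\]
Applying parts~\ref{1_part:integrals_of_elementary_functions_1} and \ref{1_part:integrals_of_elementary_functions_2} of \cref{1_res:integrals_of_elementary_functions} rewrites the right-hand side as $\bigvee_{n=1}^{\infty}(r_1\ointm{\varphi_n}+r_2\ointm{\psi_n})$. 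Since $\{r_1\ointm{\varphi_n}\}_{n=1}^{\infty}$ and $\{r_2\ointm{\psi_n}\}_{n=1}^{\infty}$ are increasing sequences in $\osext$, part~\ref{1_part:binary_operations_in_extended_space_2} of \cref{1_res:binary_operations_in_extended_space} splits the supremum of the sum into the sum of the suprema, and part~\ref{1_part:operations_in_extended_space_7} of \cref{1_res:operations_in_extended_space} pulls the scalars out. Unwinding the definitions of $\ointm{f_1}$ and $\ointm{f_2}$ then gives the required identity.

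For part~\ref{1_part:integral_on_positive_functions_has_usual_properties_2}, I would use the sequences $\seq{\varphi}$ and $\seq{\psi}$ exactly as above. Since $f_1\leq f_2$ pointwise and $\psi_n\uparrow f_2$, for every $k\geq 1$ we have $\varphi_k\leq f_1\leq\sup_{n\geq 1}\psi_n$ pointwise in $\posRext$. \cref{1_res:key_lemma_for_well-definednes_of_the_integral} then yields $\ointm{\varphi_k}\leq\bigvee_{n=1}^{\infty}\ointm{\psi_n}=\ointm{f_2}$ in $\osext$, and taking the supremum over $k$ gives $\ointm{f_1}\leq\ointm{f_2}$.

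No serious obstacle is expected: the entire content of \cref{1_res:key_lemma_for_well-definednes_of_the_integral}, which is the real work in this theory and which already uses the automatic Archimedean property of $\os$, has been done in advance. The only point requiring genuine care is that commuting suprema with addition and with the $\posR$-action is no longer automatic in $\osext$, so each such step must be justified by explicitly citing the relevant part of \cref{1_res:operations_in_extended_space} or \cref{1_res:binary_operations_in_extended_space} rather than being glossed over as in the real-valued case.
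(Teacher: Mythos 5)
Your proposal is correct and follows essentially the same route as the paper's own proof: both parts are reduced to \cref{1_res:integrals_of_elementary_functions} and \cref{1_res:key_lemma_for_well-definednes_of_the_integral} via approximating sequences of elementary functions, with the interchange of suprema, sums, and scalar actions justified by part~\ref{1_part:operations_in_extended_space_7} of \cref{1_res:operations_in_extended_space} and part~\ref{1_part:binary_operations_in_extended_space_2} of \cref{1_res:binary_operations_in_extended_space}. No gaps.
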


\begin{proof}
Choose a sequence $\seq{\varphi}\subseteq\elemfun$ such that $\varphi_n\uparrow f_1$ pointwise in $\posRext$, and a sequence $\seq{\psi}\subseteq\elemfun$ such that $\psi_n\uparrow f_2$ pointwise in $\posRext$. Then $r_1\varphi_n+r_2\psi_n\uparrow r_1f_1+r_2f_2$ pointwise in $\posRext$, so that part~\ref{1_part:integral_on_positive_functions_has_usual_properties_1} follows from the definition of the integral, combined with part~\ref{1_part:operations_in_extended_space_7} of \cref{1_res:operations_in_extended_space} and part~\ref{1_part:binary_operations_in_extended_space_2} of \cref{1_res:binary_operations_in_extended_space}.

Since, for all $n\geq 1$, $\varphi_n\leq f_2=\sup_{n\geq 1}{\psi_n}$ pointwise in $\posRext$, \cref{1_res:key_lemma_for_well-definednes_of_the_integral} yields that $\ointm{\varphi_n}\leq\bigvee_{n=1}^\infty\ointm{\psi_n}=\ointm{f_2}$. Hence $\ointm{f_1}=\bigvee_{n=1}^\infty\ointm{\varphi_n}\leq\ointm{f_2}$, which is part~\ref{1_part:integral_on_positive_functions_has_usual_properties_2}.
\end{proof}

The importance of the Archimedean property of $\os$\textemdash and then also that of the real numbers\textemdash is again illustrated in the proof of part~\ref{1_part:finite_integral_implies_almost_everywhere_finite_1} of the following result.

\begin{lemma}\label{1_res:finite_integral_implies_almost_everywhere_finite}
Let $\msm$ be a measure space, where $\alg$ is a $\sigma$-algebra, and let $f\in\posextmeasfun$ be such that $\ointm{f}$ is finite.
Then:
\begin{enumerate}
\item\label{1_part:finite_integral_implies_almost_everywhere_finite_1}
$f$ is almost everywhere finite-valued;
\item\label{1_part:finite_integral_implies_almost_everywhere_finite_2}
the subset $\{\pt\in\pset : f(\pt)>0\}$ is $\sigma$-finite.
\end{enumerate}
\end{lemma}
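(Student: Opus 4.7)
The plan is to prove both parts by combining the monotonicity of the order integral (Lemma~\ref{1_res:integral_on_positive_functions_has_usual_properties}) with the Archimedean property of $\os$ that was observed in \cref{1_sec:partially_ordered_vector_spaces} to be automatic for \smc{} spaces, together with the fact that $r\cdot\largest = \largest$ for every $r\in\posR\setminus\{0\}$.

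For part~\ref{1_part:finite_integral_implies_almost_everywhere_finite_1}, I would set $\mss\coloneqq\{\pt\in\pset : f(\pt)=\infty\}$. Since $f$ is $\alg$--Borel-measurable, $\{\pt : f(\pt)\geq n\}=\{\pt : f(\pt)<n\}^\comp\in\alg$ for each $n\in\NN$, so that $\mss=\bigcap_{n=1}^{\infty}\{\pt : f(\pt)\geq n\}\in\alg$. For every $n\in\NN$ we have the pointwise inequality $n\indicator{\mss}\leq f$ in $\posRext$; by \cref{1_res:integral_on_positive_functions_has_usual_properties} this yields
\[
n\npm(\mss)=\ointm{n\indicator{\mss}}\leq\ointm{f}
\]
in $\osext$. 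The right hand side is finite by hypothesis, so the left hand side is also finite; taking $n=1$ shows that $\npm(\mss)\in\posos$. The displayed inequality then holds in $\os$ for every $n\in\NN$, and the Archimedean property of $\os$ forces $\npm(\mss)\leq 0$, whence $\npm(\mss)=0$. This is exactly the statement that $f$ is $\npm$-almost everywhere finite-valued.

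For part~\ref{1_part:finite_integral_implies_almost_everywhere_finite_2}, I would set $\mss_n\coloneqq\{\pt\in\pset : f(\pt)\geq 1/n\}\in\alg$ for $n\geq 1$. Then $\{\pt : f(\pt)>0\}=\bigcup_{n=1}^{\infty}\mss_n$, so it suffices to show that $\npm(\mss_n)\in\posos$ for every $n$. The pointwise inequality $(1/n)\indicator{\mss_n}\leq f$ in $\posRext$ yields, via \cref{1_res:integral_on_positive_functions_has_usual_properties}, that
\[
\tfrac{1}{n}\npm(\mss_n)\leq\ointm{f}
\]
in $\osext$. If $\npm(\mss_n)=\largest$ were the case, then, since $1/n\neq 0$, the action of $\posR$ on $\pososext$ would give $(1/n)\npm(\mss_n)=\largest$, contradicting the finiteness of $\ointm{f}$. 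Hence $\npm(\mss_n)\in\posos$ for every $n$, and $\{\pt : f(\pt)>0\}$ is $\sigma$-finite by definition.

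The only delicate point is in part~\ref{1_part:finite_integral_implies_almost_everywhere_finite_1}, where the Archimedean argument is only available in $\os$ and not in $\osext$, so one must first deduce the finiteness of $\npm(\mss)$ from $n\npm(\mss)\leq\ointm{f}\in\posos$ before invoking it. Both arguments rely crucially on the fact that multiplication of a non-zero scalar with $\largest$ equals $\largest$, which allows one to rule out $\npm(\mss_n)=\largest$ in part~\ref{1_part:finite_integral_implies_almost_everywhere_finite_2}.
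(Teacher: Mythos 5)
Your proof is correct and follows essentially the same route as the paper: bound scalar multiples of the measures of level sets by $\ointm{f}$ via the monotonicity of the order integral, and then invoke the Archimedean property of $\os$ (for part (1)) and the action of $\posR\setminus\{0\}$ on $\largest$ (for part (2)). The only cosmetic difference is that in part (1) you work directly with $n\indicator{\{f=\infty\}}\leq f$, whereas the paper uses the sets $\{f\geq n\}$ together with the monotonicity of $\npm$; both yield the same estimate $n\,\npm(\{f=\infty\})\leq\ointm{f}$.
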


\begin{proof}
We prove part~\ref{1_part:finite_integral_implies_almost_everywhere_finite_1}. For $n\geq 1$, set $\mss_n\coloneqq\{\pt\in\pset : f(x)\geq n\text{ in }\posRext\}$. Then $n\indicator{\mss_n}\leq f$, so that $n\npm(\mss_n)=\ointm{n \indicator{\mss_n}}\leq \ointm{f}$ by part~\ref{1_part:integral_on_positive_functions_has_usual_properties_2} of \cref{1_res:integral_on_positive_functions_has_usual_properties}. Hence $\npm(\mss_n)\leq 1/n\ointm{f}$ for all $n\geq 1$. From $\{\pt\in\pset : f(x)=\infty\}=\bigcap_{n=1}^\infty\mss_n$, we see that $\npm\{\pt\in\pset : f(x)=\infty\}\leq 1/n \ointm{f}$ for all $n\geq 1$. Since $\ointm{f}$ is finite, the Archimedean property of $\os$ then implies that $\npm\{\pt\in\pset : f(x)=\infty\}\leq 0$, which shows that $\npm\{\pt\in\pset : f(x)=\infty\}=0$.

For part~\ref{1_part:finite_integral_implies_almost_everywhere_finite_2}, set $\msstwo_n\coloneqq\{\pt\in\pset : f(x)\geq 1/n \text{ in }\posRext\}$ for $n\geq 1$. Then one sees similarly that $\npm(\msstwo_n)\leq n\ointm{f}$, which is finite. Since $\{\pt\in\pset : f(\pt)>0\}=\bigcup_{n=1}^\infty \msstwo_n$, part~\ref{1_part:finite_integral_implies_almost_everywhere_finite_2} is clear.
\end{proof}

\begin{lemma}\label{1_res:zero_integral_and_almost_everywhere_zero}
Let $\msm$ be a measure space, where $\alg$ is a $\sigma$-algebra.
Let $f\in\posextmeasfun$. Then the following are equivalent:
\begin{enumerate}
\item\label{1_part:zero_integral_and_almost_everywhere_zero_1}
$\ointm{f}=0$;
\item\label{1_part:zero_integral_and_almost_everywhere_zero_2}
$f(\pt)=0$ for almost all $\pt\in\pset$.
\end{enumerate}
\end{lemma}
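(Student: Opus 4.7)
The plan is to prove both implications separately, the easier one being that an almost-everywhere zero function has zero integral, and the harder one requiring the Archimedean property in a manner reminiscent of \cref{1_res:finite_integral_implies_almost_everywhere_finite}.

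For the implication \ref{1_part:zero_integral_and_almost_everywhere_zero_2} $\Rightarrow$ \ref{1_part:zero_integral_and_almost_everywhere_zero_1}, I would fix a null set $N\in\alg$ containing $\{\pt\in\pset : f(\pt)\neq 0\}$. By definition of the integral, choose $\seq{\varphi}\subseteq\elemfun$ with $\varphi_n\uparrow f$ pointwise in $\posRext$. Since $\varphi_n\leq f$ and $f$ vanishes off $N$, we have $\varphi_n(\pt)=0$ for $\pt\notin N$. Writing $\varphi_n=\sum_{i=1}^k r_i\chi_{\mss_i}$ in pairwise disjoint form, this forces $\mss_i\subseteq N$ whenever $r_i>0$; by the monotonicity of $\npm$ (part~\ref{1_part:all_to_be_expected_2} of \cref{1_res:all_to_be_expected}) we then have $\npm(\mss_i)\leq\npm(N)=0$, so $r_i\npm(\mss_i)=0$. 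The terms with $r_i=0$ contribute $0\cdot\npm(\mss_i)=0$ by the convention on the action of $\posR$ on $\pososext$. Hence $\ointm{\varphi_n}=0$ for every $n$, and therefore $\ointm{f}=\bigvee_{n=1}^\infty \ointm{\varphi_n}=0$.

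For the converse \ref{1_part:zero_integral_and_almost_everywhere_zero_1} $\Rightarrow$ \ref{1_part:zero_integral_and_almost_everywhere_zero_2}, the idea is to sandwich $\{f>0\}$ between measurable level sets, exactly as in the proof of \cref{1_res:finite_integral_implies_almost_everywhere_finite}. Assuming $\ointm{f}=0$, I would set $\mss_n\coloneqq\{\pt\in\pset : f(\pt)\geq 1/n \text{ in } \posRext\}$ for $n\geq 1$; these are measurable and $\mss_n\uparrow\{\pt\in\pset : f(\pt)>0\}$. From $(1/n)\chi_{\mss_n}\leq f$ pointwise, \cref{1_res:integral_on_positive_functions_has_usual_properties} gives
\[
(1/n)\npm(\mss_n)=\ointm{(1/n)\chi_{\mss_n}}\leq\ointm{f}=0
\]
in $\osext$. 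This forces $\npm(\mss_n)=0$: if $\npm(\mss_n)=\largest$, then $(1/n)\npm(\mss_n)=\largest\neq 0$ by the action of $\posR$, so $\npm(\mss_n)$ must be finite, and then $(1/n)\npm(\mss_n)=0$ in the vector space $\os$ implies $\npm(\mss_n)=0$. Applying \cref{1_res:monotone_increasing_theorem_for_m} to the increasing sequence $\mss_n\uparrow\{\pt\in\pset : f(\pt)>0\}$ (the latter is measurable as a countable union of the $\mss_n$, and lies in $\alg$ since $\alg$ is now a $\sigma$-algebra) yields $\npm(\mss_n)\uparrow\npm\{\pt\in\pset:f(\pt)>0\}$. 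Since every $\npm(\mss_n)=0$, the supremum is $0$. Since $f\geq 0$, the exceptional set $\{\pt:f(\pt)\neq 0\}$ coincides with $\{\pt:f(\pt)>0\}$ and is a null set itself.

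There is no genuine obstacle here; the only delicate points, both inherited from the extended setting, are the handling of the convention $0\cdot\largest=0$ in the first implication and ruling out $\npm(\mss_n)=\largest$ in the second. Both are settled by the action of $\posR$ on $\pososext$ together with the Archimedean property that the \smc ness of $\os$ supplies.
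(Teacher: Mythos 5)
Your proposal is correct, and one half of it genuinely diverges from the paper's argument. For \ref{1_part:zero_integral_and_almost_everywhere_zero_2}~$\Rightarrow$~\ref{1_part:zero_integral_and_almost_everywhere_zero_1} you do essentially what the paper does (deduce that each approximant $\varphi_n$ vanishes almost everywhere and hence has zero integral), only with the elementary-function step spelled out explicitly; that is fine. For \ref{1_part:zero_integral_and_almost_everywhere_zero_1}~$\Rightarrow$~\ref{1_part:zero_integral_and_almost_everywhere_zero_2} the paper again works with the approximants: from $\ointm{\varphi_n}=0$ it concludes that each support $\{\pt\in\pset:\varphi_n(\pt)\neq 0\}$ is a null set and then writes $\{\pt\in\pset: f(\pt)\neq 0\}$ as the countable union of these supports, finishing with $\sigma$-sub-additivity. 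You instead run a Chebyshev-type argument on the level sets $\mss_n=\{\pt\in\pset: f(\pt)\geq 1/n\}$ of $f$ itself, using $(1/n)\indicator{\mss_n}\leq f$ and \cref{1_res:monotone_increasing_theorem_for_m}. Both routes are valid and of comparable length; yours has the small advantage of not depending on the choice of approximating sequence and of running parallel to \cref{1_res:finite_integral_implies_almost_everywhere_finite}, while the paper's keeps everything at the level of elementary functions. One small correction to your closing remark: unlike in \cref{1_res:finite_integral_implies_almost_everywhere_finite}, the Archimedean property is not actually needed here and your argument never uses it\textemdash since $\ointm{f}=0$, the chain $0\leq (1/n)\npm(\mss_n)\leq 0$ already forces $\npm(\mss_n)=0$ using only the properness of $\posos$ and the fact that $\largest\not\leq 0$ in $\osext$.
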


\begin{proof}
Choose a sequence $\seq{\varphi}\subset\elemfun$ such that $\varphi_n\uparrow f$ pointwise in $\posRext$.

 Suppose that part~\ref{1_part:zero_integral_and_almost_everywhere_zero_1} holds. Then $\ointm{\varphi_n}=0$ for all $n\geq 1$ by  part~\ref{1_part:integral_on_positive_functions_has_usual_properties_2} of \cref{1_res:integral_on_positive_functions_has_usual_properties}. For the elementary functions $\varphi_n$, however, it is a direct consequence of the definition of their integrals that then $\npm(\{\pt\in\pset : \varphi_n(x)\neq 0\})=0$. Since the set $\{\pt\in\pset : f(x)\neq 0\}$ can be written as the union $\bigcup_{n=1}^\infty\{\pt\in\pset : \varphi_n(x)\neq 0\}$, the $\sigma$-sub-additivity of $\npm$ then implies that $\npm(\{\pt\in\pset : f(x)\neq 0\}=0$. Hence part~\ref{1_part:zero_integral_and_almost_everywhere_zero_1} implies part~\ref{1_part:zero_integral_and_almost_everywhere_zero_2}.

Suppose that part~\ref{1_part:zero_integral_and_almost_everywhere_zero_2} holds.
Since $0\leq\varphi_n(\pt)\leq f(\pt)$ for all $\pt\in\pset$, we see that $\varphi_n(\pt)=0$ for almost all $\pt\in\pset$. For the elementary functions $\varphi_n$, however, it is a direct consequence of the definition of their integrals that then $\ointm{\varphi_n}=0$ for all $n$. Since $\ointm{\varphi_n}\uparrow \ointm{f}$, it follows that $\ointm{f}=0$. Hence part~\ref{1_part:zero_integral_and_almost_everywhere_zero_2} implies part~\ref{1_part:zero_integral_and_almost_everywhere_zero_1}.
\end{proof}

\begin{corollary}\label{1_res:almost_everywhere_equal_implies_equal_integral_positive_case}
Let $\msm$ be a measure space, where $\alg$ is a $\sigma$-algebra.
Let $f_1,\,f_2\in\posextmeasfun$ and suppose that $f_1(\pt)=f_2(\pt)$ for almost all $\pt\in\pset$. Then $\ointm{f_1}=\ointm{f_2}$ in $\osext$.
\end{corollary}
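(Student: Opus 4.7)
The plan is to reduce the claim to the preceding \cref{1_res:zero_integral_and_almost_everywhere_zero} by decomposing each $f_i$ into a part supported on a measurable null set and a part on which $f_1$ and $f_2$ agree identically.

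First I would choose a measurable null set $N\in\alg$ with $\npm(N)=0$ that contains the exceptional set $\{\pt\in\pset : f_1(\pt)\neq f_2(\pt)\}$; such an $N$ exists by the very definition of ``almost everywhere'' in \cref{1_sec:positive_osext-valued_measure}. Since $\indicator{N}$ and $\indicator{\pset\setminus N}$ both lie in $\posextmeasfun$ and $\posextmeasfun$ is closed under pointwise products in $\posRext$ (invoking the convention $0\cdot\infty=0$ recorded in \cref{1_sec:partially_ordered_vector_spaces} to handle points where $f_i(\pt)=\infty$), the functions $f_i\indicator{N}$ and $f_i\indicator{\pset\setminus N}$ are again in $\posextmeasfun$, and
\[
f_i = f_i\indicator{\pset\setminus N} + f_i\indicator{N}
\]
pointwise in $\posRext$ for $i=1,2$.

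The next step is to apply part~\ref{1_part:integral_on_positive_functions_has_usual_properties_1} of \cref{1_res:integral_on_positive_functions_has_usual_properties} to this decomposition, obtaining $\ointm{f_i}=\ointm{f_i\indicator{\pset\setminus N}}+\ointm{f_i\indicator{N}}$ in $\osext$ for $i=1,2$. By the choice of $N$, the functions $f_1\indicator{\pset\setminus N}$ and $f_2\indicator{\pset\setminus N}$ are identically equal as elements of $\posextmeasfun$, so their order integrals coincide. On the other hand, $f_i\indicator{N}$ vanishes off the null set $N$, hence vanishes almost everywhere, so \cref{1_res:zero_integral_and_almost_everywhere_zero} forces $\ointm{f_i\indicator{N}}=0$. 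Adding the two pieces yields $\ointm{f_1}=\ointm{f_2}$ in $\osext$.

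I do not expect a serious obstacle here; the argument is a routine application of the previously established machinery. The only point requiring genuine care is that the splitting $f_i=f_i\indicator{\pset\setminus N}+f_i\indicator{N}$ must be interpreted in $\posRext$ with the standing conventions $0\cdot\infty=0$ and $\infty+x=\infty$ for $x\in\posRext$, which are precisely the conventions built into the action of $\posRext$ on itself in \cref{1_sec:partially_ordered_vector_spaces}. With these conventions in force, the decomposition is a valid pointwise identity and the remainder of the proof is bookkeeping.
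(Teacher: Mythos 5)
Your proof is correct and follows essentially the same route as the paper's: both decompose each $f_i$ into a common part (the restriction off the exceptional set) plus a part supported on a null set, then combine the additivity from \cref{1_res:integral_on_positive_functions_has_usual_properties} with \cref{1_res:zero_integral_and_almost_everywhere_zero}. The only cosmetic difference is that the paper works directly with the (measurable) set $\{\pt\in\pset : f_1(\pt)\neq f_2(\pt)\}$ rather than an enclosing null set $N$, which changes nothing of substance.
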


\begin{proof}
Set $\mss\coloneqq\{\pt\in\pset : f_1(\pt)\neq f_2(\pt)\}$. Then $\mss$ is a measurable subset of measure zero. There exists $g\in\posextmeasfun$ such that $f_1=g+f_1\indicator{\mss}$ and $f_2=g+f_2\indicator{\mss}$. Then \cref{1_res:integral_on_positive_functions_has_usual_properties} and \cref{1_res:zero_integral_and_almost_everywhere_zero} show that $\ointm{f_1}$ and $\ointm{f_2}$ are both equal to $\ointm{g}$.
\end{proof}

We shall now define the order integral on a space of finite-valued measurable functions that need not be positive.  We shall write  $\measfun$ for the vector lattice of all $\RR$-valued $\alg$--Borel measurable functions on $\pset$ and $\posmeasfun$ for its positive cone of all positive measurable functions.
A function $f:\pset\to\posR$ is measurable in the present sense precisely if it is measurable in the earlier sense as a map from $\pset$ into $\posRext$.

We let $\integrablefun$ denote the set of all $f\in\measfun$ such that $\ointm{\abs{f}}$ is finite, and write $\posintegrablefun$ for the set of all positive measurable $f$ with finite integral. It follows from \cref{1_res:integral_on_positive_functions_has_usual_properties} that $\integrablefun$ is an order ideal of $\posmeasfun$; its positive cone is $\posintegrablefun$.

For $f\in\integrablefun$, we choose $f_1,f_2\in\posintegrablefun$ such that $f=f_1-f_2$, and we define the \emph{order integral of $f$} as $\ointm{f}\coloneqq\ointm{f_1}-\ointm{f_2}$. It is an element of $\os$. Invoking \cref{1_res:integral_on_positive_functions_has_usual_properties}, the usual arguments show that this is well defined, and that it defines a positive operator from $\integrablefun$ into $\os$.

We single out the following result for reference purposes. This triangle inequality for the order integral in the case of vector lattices is easily verified by splitting a function into its positive and negative parts.

\begin{lemma}\label{1_res:triangle_inequality_for_order_integral_in_case_of_vector_lattices}
	Let $\msm$ be a measure space, where $\alg$ is a $\sigma$-algebra and $\os$ is a \sDc\ vector lattice. Then $\lrabs{\ointm{f}}\leq\ointm{\lrabs{f}}$ for $f\in\integrablefun$.
\end{lemma}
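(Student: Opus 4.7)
The plan is to reduce the inequality to a simple vector-lattice identity applied to the integrals of the positive and negative parts of $f$. First, since $f\in\integrablefun$, both $\pos{f}$ and $\negt{f}$ lie in $\posintegrablefun$ (being dominated by $\abs{f}$, with $\integrablefun$ an order ideal of $\measfun$ as noted just before the statement). By the definition of the order integral of a general element of $\integrablefun$, we have $\ointm{f}=\ointm{\pos{f}}-\ointm{\negt{f}}$, and both terms on the right are finite elements of $\posos$.

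Next I would invoke the elementary vector-lattice fact that $\abs{a-b}\leq a+b$ for any $a,b\in\pos{\os}$; this is immediate from $a-b\leq a\leq a+b$ and $b-a\leq b\leq a+b$, so that $\abs{a-b}=(a-b)\vee(b-a)\leq a+b$. Applying this with $a\coloneqq\ointm{\pos{f}}$ and $b\coloneqq\ointm{\negt{f}}$ gives
\[
\lrabs{\ointm{f}}=\lrabs{\ointm{\pos{f}}-\ointm{\negt{f}}}\leq\ointm{\pos{f}}+\ointm{\negt{f}}.
\]

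Finally, since $\abs{f}=\pos{f}+\negt{f}$ pointwise and both summands are in $\posintegrablefun$, part~\ref{1_part:integral_on_positive_functions_has_usual_properties_1} of \cref{1_res:integral_on_positive_functions_has_usual_properties} (additivity of the order integral on $\posextmeasfun$, specialised to finite-valued functions) yields $\ointm{\pos{f}}+\ointm{\negt{f}}=\ointm{\abs{f}}$. Combining this equality with the previous inequality gives $\lrabs{\ointm{f}}\leq\ointm{\abs{f}}$, as required. There is no real obstacle here: the only point that might require a moment's care is that everything is taking place inside $\os$ itself (not $\osext$), since $\ointm{\abs{f}}$ is finite by hypothesis and therefore so are $\ointm{\pos{f}}$ and $\ointm{\negt{f}}$ by part~\ref{1_part:integral_on_positive_functions_has_usual_properties_2} of the same lemma; this ensures that the ordinary vector-lattice inequality applies without having to worry about the monoid arithmetic in $\osext$.
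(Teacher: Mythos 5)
Your proof is correct and follows exactly the route the paper indicates (the paper only sketches it, remarking that the inequality "is easily verified by splitting a function into its positive and negative parts"): you decompose $f=\pos{f}-\negt{f}$, use the well-definedness and additivity of the order integral on $\posintegrablefun$, and apply the elementary lattice inequality $\abs{a-b}\leq a+b$ for $a,b\in\posos$. Your closing remark that all quantities are finite elements of $\os$, so that no $\osext$-arithmetic is needed, is exactly the right point to check.
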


Returning to general \smc\ partially ordered vector spaces that need not be vector lattices, we set
\[
\aezerofun\coloneqq\{f\in\measfun : f(\pt)=0\text{ for almost all }\pt\in\pset\}.
\]

\cref{1_res:zero_integral_and_almost_everywhere_zero} and \cref{1_res:integral_on_positive_functions_has_usual_properties} imply that $\aezerofun\subseteq\integrablefun$ and that $\ointm{f}=0$ for $f\in\aezerofun$, and that $\ointm{f_1}\leq\ointm{f_2}$ when $f_1,\,f_2\in\integrablefun$ are such that $f_1\leq f_2$ almost everywhere.

 For \mc\ $\os$, we shall return to the vector lattice properties of $\aezerofun$ and $\integrablefun$ in \cref{1_subsec:L_1_spaces} after the monotone convergence theorem will have been established in \cref{1_subsec:convergence_theorems}. For the moment, we conclude this section with the following result. It involves the $\sigma$-order continuity of an operator from \cref{1_def:order_continuity}.

\begin{proposition}\label{1_res:push_forward}
Let $\msm$ be a measure space, where $\alg$ is a $\sigma$-algebra and $\npm$ is finite, let $\ostwo$ be a \smc\ partially ordered vector space, and let $T:\os\to \ostwo$ be a $\sigma$-order continuous positive operator. Set
\[
\npm_T(\mss)\coloneqq T\left(\npm(\mss)\right)
\]
for $\mss\in\alg$. Then $(\pset,\alg,\npm_T,\ostwo)$ is a measure space.

Suppose that $f\in\posextmeasfun$ is such that $\ointm{f}\in\pososext$ is actually finite. Then $\int_\pset^{\mathrm o}\!f\di{\npm_T}\in\overline{\pos{\ostwo}}$ is also finite, and
\begin{equation}\label{1_eq:push_forward}
T\left(\ointm{f}\right)=\int_\pset^{\mathrm o}\!f\di{\npm_T}.
\end{equation}

Suppose that $f\in\integrablefun$. Then $f\in{\lebfont L}^1(\pset,\alg,\npm_T;\RR)$ and \cref{1_eq:push_forward} holds.
\end{proposition}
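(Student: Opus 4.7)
I would organize the argument in four stages, matching the four things that need to be verified: that $\npm_T$ is well-defined as a measure, the formula on elementary functions, the positive case, and the general $\integrablefun$ case.

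First, to see that $(\pset,\alg,\npm_T,\ostwo)$ is a measure space, note that $\npm(\mss)\in\posos$ for every $\mss\in\alg$ because $\npm$ is finite (using part~\ref{1_part:all_to_be_expected_2} of \cref{1_res:all_to_be_expected}), so $\npm_T(\mss)=T(\npm(\mss))\in\pos{\ostwo}$ is well-defined, and $\npm_T(\emptyset)=T(0)=0$. For the $\sigma$-additivity, let $\seq{\mss}\subseteq\alg$ be pairwise disjoint with union in $\alg$. \cref{1_res:monotone_increasing_theorem_for_m} (applied to the increasing sequence of partial unions) gives $\sum_{n=1}^{N}\npm(\mss_n)\uparrow\npm\!\left(\bigcup_{n=1}^{\infty}\mss_n\right)$ in $\os$. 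Since $T$ is positive and $\sigma$-order continuous, applying $T$ and using linearity yields $\sum_{n=1}^{N}\npm_T(\mss_n)\uparrow\npm_T\!\left(\bigcup_{n=1}^{\infty}\mss_n\right)$ in $\ostwo$, which is the required $\sigma$-additivity in $\overline{\ostwo}$ by part~\ref{1_part:operations_in_extended_space_1} of \cref{1_res:operations_in_extended_space}.

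Next, the identity \cref{1_eq:push_forward} is immediate on $\elemfun$. If $\varphi=\sum_{i=1}^{m}r_i\indicator{\mss_i}$ with pairwise disjoint $\mss_i\in\alg$, then $\ointm{\varphi}=\sum_{i=1}^{m}r_i\npm(\mss_i)$ lies in $\pos{\os}$ (again by finiteness of $\npm$), and since $T$ is linear,
\[
T\!\left(\ointm{\varphi}\right)=\sum_{i=1}^{m}r_i T(\npm(\mss_i))=\sum_{i=1}^{m}r_i\npm_T(\mss_i)=\int_\pset^{\mathrm{o}}\!\varphi\di{\npm_T}.
\]

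Now suppose $f\in\posextmeasfun$ with $\ointm{f}\in\posos$ finite. Choose $\seq{\varphi}\subseteq\elemfun$ with $\varphi_n\uparrow f$ pointwise in $\posRext$; then $\ointm{\varphi_n}\uparrow\ointm{f}$ in $\overline{\os}$, and since the supremum is finite, part~\ref{1_part:operations_in_extended_space_2} of \cref{1_res:operations_in_extended_space} tells us that $\ointm{\varphi_n}\in\posos$ for every $n$ and $\ointm{\varphi_n}\uparrow\ointm{f}$ in $\os$. The $\sigma$-order continuity of the positive operator $T$ then gives $T(\ointm{\varphi_n})\uparrow T(\ointm{f})$ in $\ostwo$. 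Combining this with the elementary-function identity of the previous step, we have $\int_\pset^{\mathrm{o}}\!\varphi_n\di{\npm_T}=T(\ointm{\varphi_n})\uparrow T(\ointm{f})$ in $\ostwo$. On the other hand, by the definition of the order integral with respect to $\npm_T$, $\int_\pset^{\mathrm{o}}\!\varphi_n\di{\npm_T}\uparrow\int_\pset^{\mathrm{o}}\!f\di{\npm_T}$ in $\overline{\ostwo}$. Part~\ref{1_part:operations_in_extended_space_1} of \cref{1_res:operations_in_extended_space} (to recognise that the supremum in $\ostwo$ computed above is also the supremum in $\overline{\ostwo}$) yields that $\int_\pset^{\mathrm{o}}\!f\di{\npm_T}$ is finite and equal to $T(\ointm{f})$.

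Finally, for $f\in\integrablefun$, write $f=f_1-f_2$ with $f_1,f_2\in\posintegrablefun$. Applying the previous step to $f_1$ and $f_2$ separately shows that both $\int_\pset^{\mathrm{o}}\!f_i\di{\npm_T}$ are finite elements of $\pos{\ostwo}$, so $f_1,f_2\in\Ell^1(\pset,\alg,\npm_T;\RR)$ and hence $f\in\Ell^1(\pset,\alg,\npm_T;\RR)$. The formula \cref{1_eq:push_forward} then follows from linearity of $T$ and the definition of the order integral on $\integrablefun$. The only genuine obstacle is the bookkeeping in the positive case, where one has to be careful to move the pointwise supremum $\varphi_n\uparrow f$ through the integral as a supremum in $\osext$, then collapse it to a supremum in $\os$ via the finiteness of $\ointm{f}$, and only at that point invoke the $\sigma$-order continuity of $T$ (which is a statement about $\os$, not $\osext$); the results of \cref{1_sec:partially_ordered_vector_spaces} are tailored exactly for this transition.
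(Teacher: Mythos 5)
Your proposal is correct and follows exactly the route the paper takes: the paper's own proof is a three-line sketch ($\sigma$-order continuity gives the measure property, the formula is clear on elementary functions, and the general cases follow from the definition of the order integral and $\sigma$-order continuity), of which your argument is simply the fully written-out version, with the appropriate appeals to \cref{1_res:monotone_increasing_theorem_for_m} and \cref{1_res:operations_in_extended_space} supplied. Nothing to correct.
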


\begin{proof}
It is immediate from the $\sigma$-order continuity of $T$ that $\npm_T$ is an $\overline{\pos{\ostwo}}$-valued measure.

The validity of \cref{1_eq:push_forward} is clear for elementary functions. The validity for general $f\in\posextmeasfun$ then follows from the definition of the order integral and the $\sigma$-order continuity of $T$. This, in turn, implies the statement for $f\in\integrablefun$. 	
	
\end{proof}

\subsection{Convergence theorems}\label{1_subsec:convergence_theorems}

We shall now establish the three basic convergence theorems in the general context and start with the monotone convergence theorem.

\begin{theorem}[Monotone convergence theorem]\label{1_res:monotone_convergence_theorem}
Let $\msm$ be a measure space, where $\alg$ is a $\sigma$-algebra.
Let $\seq{f}\subseteq \posextmeasfun$ and $f\in\posextmeasfun$ be such that $f_n(\pt)\uparrow f(\pt)$ in $\posRext$ for almost all $\pt\in\pset$. Then
\[
\ointm{f_n}\uparrow\ointm{f}
\]
in $\osext$.
\end{theorem}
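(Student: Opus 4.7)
I begin by reducing to the case where the convergence holds everywhere on $\pset$. Let $\mss$ denote the null set off of which $f_n(\pt)\uparrow f(\pt)$ holds. Replacing $f_n$ by $f_n\indicator{\mss^\comp}$ and $f$ by $f\indicator{\mss^\comp}$ (all still in $\posextmeasfun$), the integrals do not change by \cref{1_res:almost_everywhere_equal_implies_equal_integral_positive_case}, so I may assume $f_n\uparrow f$ pointwise in $\posRext$ on all of $\pset$. By part~\ref{1_part:integral_on_positive_functions_has_usual_properties_2} of \cref{1_res:integral_on_positive_functions_has_usual_properties}, the sequence $\seq{\ointm{f_n}}$ is increasing in $\osext$ and each term is bounded above by $\ointm{f}$. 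Part~\ref{1_part:completeness_properties_of_extended_space_1} of \cref{1_res:completeness_properties_of_extended_space} guarantees that $\bigvee_{n=1}^\infty \ointm{f_n}$ exists in $\osext$, and this easy direction yields
\[
\bigvee_{n=1}^\infty\ointm{f_n}\leq\ointm{f}\quad\text{in $\osext$.}
\]

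\textbf{The hard direction: a diagonal construction of elementary approximants.} For each $n\geq 1$, choose an increasing sequence $\{\varphi_{n,m}\}_{m=1}^\infty\subseteq\elemfun$ with $\varphi_{n,m}\uparrow f_n$ pointwise in $\posRext$ as $m\to\infty$, so that $\ointm{f_n}=\bigvee_{m=1}^\infty\ointm{\varphi_{n,m}}$ by definition. Define
\[
\eta_k\coloneqq\max\{\varphi_{1,k},\varphi_{2,k},\dots,\varphi_{k,k}\}\in\elemfun.
\]
Then $\eta_k\uparrow$ in $\elemfun$, and $\eta_k\leq f_k$ pointwise because for each $i\leq k$ we have $\varphi_{i,k}\leq f_i\leq f_k$. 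Moreover, for any fixed $i$ and every $k\geq i$, $\eta_k\geq\varphi_{i,k}$, so that
\[
\sup_{k\geq 1}\eta_k(\pt)\geq\sup_{k\geq i}\varphi_{i,k}(\pt)=f_i(\pt)
\]
for all $\pt\in\pset$; taking the supremum over $i$ gives $\sup_k\eta_k\geq f$ pointwise. Combined with $\eta_k\leq f_k\leq f$, this yields $\eta_k\uparrow f$ pointwise in $\posRext$.

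\textbf{Conclusion.} Since $\seqm{\eta}$ is an increasing sequence of elementary functions with pointwise supremum $f$, the definition of the order integral on $\posextmeasfun$ (which is independent of the choice of approximating sequence, as established after \cref{1_res:key_lemma_for_well-definednes_of_the_integral}) gives
\[
\ointm{f}=\bigvee_{k=1}^\infty\ointm{\eta_k}\quad\text{in $\osext$.}
\]
Since $\eta_k\leq f_k$, part~\ref{1_part:integral_on_positive_functions_has_usual_properties_2} of \cref{1_res:integral_on_positive_functions_has_usual_properties} gives $\ointm{\eta_k}\leq\ointm{f_k}$ in $\osext$ for every $k$, so that
\[
\ointm{f}=\bigvee_{k=1}^\infty\ointm{\eta_k}\leq\bigvee_{n=1}^\infty\ointm{f_n}.
\]
Combined with the easy direction, this gives $\ointm{f_n}\uparrow\ointm{f}$ in $\osext$, as required.

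\textbf{Main obstacle.} The genuinely delicate point is the diagonal construction: one must produce \emph{elementary} approximants to $f$ that lie below the $f_n$, since in the general \smc\ setting there is no continuous functional calculus or truncation argument available, and one cannot exploit linear ordering of $\os$. The pointwise $\max$ of finitely many elementary functions remains elementary, and the rest is a careful bookkeeping of the two-parameter limit to ensure $\eta_k\uparrow f$ pointwise, after which the already-established independence-of-approximants in the construction of $\ointm{f}$ and the monotonicity of the integral from \cref{1_res:integral_on_positive_functions_has_usual_properties} do the remaining work.
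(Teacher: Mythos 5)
Your proof is correct and takes essentially the same route as the paper's own: the same reduction to pointwise convergence everywhere by redefining on a null set, the same diagonal construction of elementary approximants $\eta_k=\max\{\varphi_{1,k},\dotsc,\varphi_{k,k}\}$ squeezed below the $f_k$ and increasing to $f$, and the same appeal to the independence of $\ointm{f}$ from the choice of approximating sequence together with \cref{1_res:integral_on_positive_functions_has_usual_properties}. The only cosmetic difference is that you justify the initial reduction via \cref{1_res:almost_everywhere_equal_implies_equal_integral_positive_case} where the paper cites \cref{1_res:zero_integral_and_almost_everywhere_zero}; both are adequate.
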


\begin{proof}
In view of \cref{1_res:zero_integral_and_almost_everywhere_zero}, we can, by redefining all $f_n$ and $f$ to be zero on a measurable subset of measure 0, suppose that $f_n(\pt)\uparrow f(\pt)$ in $\posRext$ for all $\pt\in\pset$.
For each $n\geq 1$, let $\{\varphi_i^n\}_{i=1}^\infty$ be a sequence in $\elemfun$ such that $\varphi_i^n\uparrow f_n$ pointwise in $\posRext$ as $i\to\infty$. Set $\psi_n\coloneqq\bigvee_{i=1}^n \varphi_n^i$ for $n\geq 1$.  Then $\psi_n\in\elemfun$ for all $n\geq 1$ and $\psi_n\uparrow f$ pointwise in $\osext$. Hence $\ointm{f}=\bigvee_{n=1}^\infty\ointm{\psi_n}$ by definition. On the other hand, since, for all $n\geq 1$, $\psi_n\leq f_n$ pointwise in $\osext$, we have $\ointm{\psi_n}\leq\ointm{f_n}$ for all $n\geq 1$. Hence $\ointm{f}\leq \bigvee_{n=1}^\infty\ointm{f_n}$ in $\osext$. As the reverse inequality is clear, the proof is complete.
\end{proof}

Just as \cref{1_res:monotone_increasing_theorem_for_m} implies \cref{1_res:monotone_decreasing_theorem_for_m}, \cref{1_res:monotone_convergence_theorem} implies our next result. It is a special case of the dominated convergence theorem; see part~\ref{1_part:dominated_convergence_theorem_4} of \cref{1_res:dominated_convergence_theorem}.

\begin{corollary}\label{1_res:monotone_convergence_theorem_for_decreasing_sequences}
Let $\msm$ be a measure space, where $\alg$ is a $\sigma$-algebra.
Let $\seq{f}\subseteq \posextmeasfun$ and $f\in\posextmeasfun$ be such that $f_n(\pt)\downarrow f(\pt)$ in $\posRext$ for almost all $\pt\in\pset$. If $\ointm{f_1}$ is finite, then
\[
\ointm{f_n}\downarrow\ointm{f}
\]
in $\os$.
\end{corollary}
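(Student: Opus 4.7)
The plan is to reduce the decreasing case to the already-established monotone convergence theorem (\cref{1_res:monotone_convergence_theorem}) by passing to the sequence of differences $g_n\coloneqq f_1-f_n$. The hypothesis that $\ointm{f_1}$ is finite is crucial here: by part~\ref{1_part:finite_integral_implies_almost_everywhere_finite_1} of \cref{1_res:finite_integral_implies_almost_everywhere_finite}, it forces $f_1$ to be almost everywhere finite-valued, and hence so are all $f_n$ and $f$, since they are dominated by $f_1$ almost everywhere. This legitimises the pointwise subtraction $f_1-f_n$ outside a null set.

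First I would modify $f_1$, all $f_n$, and $f$ on a measurable null set (which does not affect their integrals, by \cref{1_res:almost_everywhere_equal_implies_equal_integral_positive_case}) so that all functions are finite-valued and $f_n(\pt)\downarrow f(\pt)$ in $\posR$ for every $\pt\in\pset$. Then $g_n\coloneqq f_1-f_n\in\posmeasfun$ satisfies $g_n(\pt)\uparrow (f_1-f)(\pt)$ pointwise in $\posR$, and \cref{1_res:monotone_convergence_theorem} yields $\ointm{g_n}\uparrow\ointm{(f_1-f)}$ in $\osext$. The additivity of the integral (part~\ref{1_part:integral_on_positive_functions_has_usual_properties_1} of \cref{1_res:integral_on_positive_functions_has_usual_properties}) gives
\[
\ointm{f_n}+\ointm{g_n}=\ointm{f_1}\quad\text{and}\quad\ointm{f}+\ointm{(f_1-f)}=\ointm{f_1}
\]
in $\osext$. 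Because $\ointm{f_1}$ is finite and the summands are elements of $\pososext$, part~\ref{1_part:operations_in_extended_space_2} of \cref{1_res:operations_in_extended_space} ensures that all four integrals are in fact finite, i.e.\ they lie in $\pos{\os}$, and that $\ointm{g_n}\uparrow\ointm{(f_1-f)}$ actually holds in $\os$.

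The final step is to flip this increasing convergence into a decreasing one. Writing $\ointm{f_n}=\ointm{f_1}-\ointm{g_n}$ and $\ointm{f}=\ointm{f_1}-\ointm{(f_1-f)}$, I would invoke parts~\ref{1_part:operations_in_extended_space_5} and~\ref{1_part:operations_in_extended_space_7} of \cref{1_res:operations_in_extended_space} (translation by the finite element $\ointm{f_1}$ and multiplication by $r=-1$, which exchanges suprema with infima) to conclude that $\ointm{f_n}\downarrow\ointm{f}$ in $\os$. The main obstacle is not conceptual but rather the order-theoretic bookkeeping: one must be careful to perform the subtraction inside $\os$ (which is justified precisely because $\ointm{f_1}$ is finite), since the monoid $\osext$ does not admit general cancellation. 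Once the reduction to the increasing case is in place and finiteness is exploited to stay inside $\os$, the remainder is a routine application of the tools in \cref{1_res:operations_in_extended_space}.
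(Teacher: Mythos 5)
Your proposal is correct and follows essentially the same route as the paper's own proof: reduce to the monotone convergence theorem via the differences $f_1-f_n$ after redefining all functions on a measurable null set to be finite-valued, then use the finiteness of $\ointm{f_1}$ to perform the cancellation inside $\os$. (The only quibble is that part~(7) of \cref{1_res:operations_in_extended_space} is stated for $r\in\posR$ and so does not literally cover $r=-1$; but at that stage all quantities lie in the vector space $\os$, where negation trivially interchanges suprema and infima, so this is cosmetic rather than a gap.)
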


\begin{proof}
In view of \cref{1_res:finite_integral_implies_almost_everywhere_finite} and \cref{1_res:almost_everywhere_equal_implies_equal_integral_positive_case}, we can, after redefining all $f_n$ and $f$ to be zero on a suitable measurable subset of measure zero, suppose that the $f_n$ have finite values and that $f_n\downarrow f$ pointwise. Then the functions $f_1-f_n$ are well-defined elements of $\posmeasfun$. Since $\ointm{f_n}$ is also finite for all $n$, we have $\ointm{(f_1-f_n)}=\ointm{f_1}-\ointm{f_n}$ for all $n\geq 1$. Similarly, $\ointm{(f_1-f)}=\ointm{f_1}-\ointm{f}$. Since $(f_1-f_n)\uparrow (f_1-f)$, an application of  \cref{1_res:monotone_convergence_theorem} shows that $\left(\ointm{f_1}-\ointm{f_n}\right)\uparrow(\ointm{f_1}-\ointm{f})$ in $\os$. We conclude that $\ointm{f_n}\downarrow\ointm{f}$.
\end{proof}

The combination of \cref{1_res:finite_integral_implies_almost_everywhere_finite} and \cref{1_res:monotone_convergence_theorem} yields the following.

\begin{proposition}\label{1_res:almost_everywhere_finite_supremum}
Let $\msm$ be a measure space, where $\alg$ is a $\sigma$-algebra.
Let $\seq{f}\subseteq \posextmeasfun$ and $f\in\posextmeasfun$ be such that $f_n(\pt)\uparrow f(\pt)$ in $\posRext$ for almost all $\pt\in\pset$, and suppose that $\bigvee_{n=1}^\infty\ointm{f_n}$ is finite. Then $\{\pt\in\pset: \bigvee_{n=1}^\infty f_n(\pt)=\largest \}$ is a measurable subset of $\pset$ of measure zero.
\end{proposition}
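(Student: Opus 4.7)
The plan is to deduce the result quickly from the monotone convergence theorem and part~\ref{1_part:finite_integral_implies_almost_everywhere_finite_1} of \cref{1_res:finite_integral_implies_almost_everywhere_finite}, together with the measurability of the pointwise supremum of a sequence of measurable functions.

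First, I would apply \cref{1_res:monotone_convergence_theorem} to the sequence $\seq{f}$ and the function $f$. This yields $\ointm{f_n}\uparrow\ointm{f}$ in $\osext$, so that $\ointm{f}=\bigvee_{n=1}^\infty\ointm{f_n}$, which is finite by hypothesis. Part~\ref{1_part:finite_integral_implies_almost_everywhere_finite_1} of \cref{1_res:finite_integral_implies_almost_everywhere_finite} then tells us that $f$ is almost everywhere finite-valued, i.e., that there exists a measurable null set $\mss_1\in\alg$ with $\{\pt\in\pset : f(\pt)=\infty\text{ in }\posRext\}\subseteq\mss_1$ and $\npm(\mss_1)=0$.

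Next I would check that the set in the statement is measurable. Put $g\coloneqq\bigvee_{n=1}^\infty f_n$, a pointwise supremum in $\posRext$. For each finite $r\in\posR$ one has $\{\pt\in\pset : g(\pt)<r\}=\bigcap_{n=1}^\infty\{\pt\in\pset : f_n(\pt)<r\}\in\alg$, so $g\in\posextmeasfun$; consequently $\{\pt\in\pset : g(\pt)=\largest\}=\bigcap_{k=1}^\infty\left(\pset\setminus\{\pt\in\pset : g(\pt)<k\}\right)\in\alg$.

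Finally, let $\mss_2\in\alg$ be a null set outside of which $f_n\uparrow f$ holds pointwise in $\posRext$; by hypothesis such a measurable null set exists. For every $\pt\in\pset\setminus\mss_2$ we have $g(\pt)=f(\pt)$, so the inclusion
\[
\{\pt\in\pset : g(\pt)=\largest\}\subseteq \mss_2\cup\{\pt\in\pset : f(\pt)=\largest\}\subseteq \mss_1\cup\mss_2
\]
holds. Using parts~\ref{1_part:all_to_be_expected_2} and~\ref{1_part:all_to_be_expected_1} of \cref{1_res:all_to_be_expected}, we then conclude that $\npm(\{\pt\in\pset : g(\pt)=\largest\})\leq\npm(\mss_1)+\npm(\mss_2)=0$ in $\osext$, hence equals zero. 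There is no real obstacle; the only point requiring a small amount of care is the measurability of $\{g=\largest\}$, which however follows directly from the definition of $\alg$--Borel measurability for $\posRext$-valued functions.
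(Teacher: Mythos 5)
Your proposal is correct and is essentially the argument the paper intends: it states the proposition as an immediate combination of \cref{1_res:monotone_convergence_theorem} and \cref{1_res:finite_integral_implies_almost_everywhere_finite}, which is exactly what you carry out. One small slip: the identity $\{\pt : \bigvee_n f_n(\pt)<r\}=\bigcap_{n}\{\pt : f_n(\pt)<r\}$ only holds as an inclusion $\subseteq$ in general (the supremum may equal $r$); the measurability of the pointwise supremum is nevertheless fine, either via $\{\bigvee_n f_n\leq r\}=\bigcap_n\{f_n\leq r\}$ or simply by citing the paper's remark that $\posextmeasfun$ is closed under countable pointwise suprema.
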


We continue with Fatou's lemma.

\begin{theorem}[Fatou's lemma]\label{1_res:fatou_lemma}
Let $\msm$ be a measure space, where $\alg$ is a $\sigma$-algebra and $\os$ is \sDc.

If $\seq{f}$ is a sequence in $\posextmeasfun$, then
\[
\ointm{\liminf_{n\to\infty}f_n}\leq\bigvee_{n=1}^\infty\bigwedge_{k=n}^\infty\ointm{f_k}
\]
in $\osext$.
\end{theorem}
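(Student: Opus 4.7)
The plan is to mimic the classical proof, defining $g_n(\pt) \coloneqq \bigwedge_{k=n}^\infty f_k(\pt)$ pointwise in $\posRext$ and then applying the monotone convergence theorem to the increasing sequence $\seq{g}$. The main subtleties are not algebraic but structural: at several points we invoke suprema or infima in $\osext$, and we must check that each of these exists. Once that bookkeeping is settled, the argument is the standard one.

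First, I would verify that $g_n$ is well defined and belongs to $\posextmeasfun$. For each fixed $\pt$, the sequence $\{f_k(\pt)\}_{k\geq n}\subseteq\posRext$ always has a pointwise infimum in $\posRext$ (this is the familiar completeness of the one-point compactification of $\posR$), and the measurability of $g_n$ follows because $\posextmeasfun$ is closed under at-most-countable pointwise infima in $\posRext$. By construction $g_n\uparrow \liminf_{n\to\infty}f_n$ pointwise in $\posRext$.

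Next I would record two monotonicity statements. Since $g_n\leq f_k$ pointwise in $\posRext$ for every $k\geq n$, part~\ref{1_part:integral_on_positive_functions_has_usual_properties_2} of \cref{1_res:integral_on_positive_functions_has_usual_properties} gives
\[
\ointm{g_n}\leq\ointm{f_k}\quad(k\geq n).
\]
The sequence $\{\ointm{f_k}\}_{k\geq n}\subseteq\pososext$ is bounded from below by $\ointm{g_n}\in\pososext$; since $\os$ is $\sigma$-\Dc, part~\ref{1_part:completeness_properties_of_extended_space_2} of \cref{1_res:completeness_properties_of_extended_space} guarantees that $\bigwedge_{k=n}^\infty\ointm{f_k}$ exists in $\osext$. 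Consequently
\[
\ointm{g_n}\leq\bigwedge_{k=n}^\infty\ointm{f_k}
\]
in $\osext$ for each $n\geq 1$.

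Finally I would take the supremum over $n$. On the left, the monotone convergence theorem (\cref{1_res:monotone_convergence_theorem}) applied to $g_n\uparrow \liminf_{n\to\infty}f_n$ yields $\ointm{g_n}\uparrow \ointm{\liminf_{n\to\infty}f_n}$ in $\osext$; on the right, the sequence $\bigl\{\bigwedge_{k=n}^\infty\ointm{f_k}\bigr\}_{n=1}^\infty$ is increasing in $\osext$, and so its supremum exists in $\osext$ by part~\ref{1_part:completeness_properties_of_extended_space_1} of \cref{1_res:completeness_properties_of_extended_space}. Passing to suprema in the displayed inequality gives
\[
\ointm{\liminf_{n\to\infty}f_n}=\bigvee_{n=1}^\infty\ointm{g_n}\leq\bigvee_{n=1}^\infty\bigwedge_{k=n}^\infty\ointm{f_k},
\]
which is the desired estimate. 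The only real obstacle is administrative: one must not accidentally assume that the $f_k$ are finite-valued or that the $\ointm{f_k}$ are finite, and so each existence claim for suprema and infima has to be traced back explicitly to \cref{1_res:completeness_properties_of_extended_space}.
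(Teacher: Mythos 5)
Your proof is correct and follows essentially the same route as the paper's: define $g_n\coloneqq\bigwedge_{k=n}^\infty f_k$, bound $\ointm{g_n}$ by $\bigwedge_{k=n}^\infty\ointm{f_k}$ via monotonicity of the integral, and conclude with the monotone convergence theorem applied to $g_n\uparrow\liminf_{n\to\infty}f_n$. The extra bookkeeping you carry out on the existence of the infima is exactly the point the paper flags in the remark following the theorem (this is where the \sDc ness hypothesis enters), so nothing is missing.
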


The \sDc ness is necessary to guarantee that $\bigwedge_{k=n}^\infty\ointm{f_k}$ exists for all $n\geq 1$; \smc ness is no longer sufficient here.

\begin{proof}
For $n\geq 1$, set $g_n\coloneqq \inf_{k\geq n}f_k$. Then $0\leq g_n\leq f_k$ all $k\geq n$, so that \cref{1_res:integral_on_positive_functions_has_usual_properties} implies that
\begin{equation}\label{1_eq:fatou}
\ointm{g_n}\leq\bigwedge_{k\geq n}\ointm{f_k}
\end{equation}
in $\osext$ for all $n\geq 1$.

Applying \cref{1_res:monotone_convergence_theorem} to the increasing sequence $g_n\uparrow\liminf_{n\geq 1}f_n$, and using \cref{1_eq:fatou}, we then see that
\begin{align*}
\ointm{\liminf_{n\to\infty}f_n}&=\bigvee_{n=1}^\infty\ointm{g_n}\\
&\leq \bigvee_{n=1}^\infty\bigwedge_{k=n}^\infty\ointm{f_k}
\end{align*}
in $\osext$.
\end{proof}

We conclude with the dominated convergence theorem.

\begin{theorem}[Dominated convergence theorem]\label{1_res:dominated_convergence_theorem}
Let $\msm$ be a measure space, where $\alg$ is a $\sigma$-algebra and $\os$ is \sDc.

Let $\seq{f}$ be a sequence in $\measfun$, and let $f\in\measfun$ be such that $f_n(\pt)\to f(\pt)$ for almost all $\pt$ in $\pset$.

If there exists $g\in\posextmeasfun$ such that $\ointm{g}$ is finite, and such that, for all $n\geq 1$, $\abs{f_n(\pt)}\leq g(\pt)$ in $\posRext$ for almost all $\pt$ in $\pset$, then:
\begin{enumerate}
\item\label{1_part:dominated_convergence_theorem_1}
$f_n\in\integrablefun$ for all $n\geq 1$;
\item\label{1_part:dominated_convergence_theorem_2}
$f\in\integrablefun$;
\item\label{1_part:dominated_convergence_theorem_3} $\bigwedge_{n=1}^\infty\bigvee_{k=n}^\infty\ointm{\abs{f_k-f}}=0$;
\item\label{1_part:dominated_convergence_theorem_4}
\[
\ointm{f}=\bigvee_{n=1}^\infty\bigwedge_{k=n}^\infty\ointm{f_k}=\bigwedge_{n=1}^\infty\bigvee_{k=n}^\infty\ointm{f_k}.
\]
\end{enumerate}
\end{theorem}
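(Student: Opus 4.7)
The plan is to reduce to pointwise hypotheses and then run the classical Fatou-based argument, carefully handling the ordered arithmetic in $\osext$. First I would choose a measurable null set $N$ containing the exceptional set for $f_n\to f$, the exceptional sets for the domination inequalities $\abs{f_n}\leq g$, and\textemdash via \cref{1_res:finite_integral_implies_almost_everywhere_finite} applied to the finiteness of $\ointm{g}$\textemdash the set on which $g=\largest$. Redefining $f_n$, $f$, and $g$ to vanish on $N$ changes no integrals, by \cref{1_res:almost_everywhere_equal_implies_equal_integral_positive_case}, and leaves $f_n\to f$ pointwise and $\abs{f_n}\leq g<\infty$ pointwise on $\pset$. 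Parts~\ref{1_part:dominated_convergence_theorem_1} and~\ref{1_part:dominated_convergence_theorem_2} are then immediate from the monotonicity in \cref{1_res:integral_on_positive_functions_has_usual_properties}, since $\abs{f_n},\abs{f}\leq g$ forces $\ointm{\abs{f_n}},\ointm{\abs{f}}\leq\ointm{g}\in\os$.

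For part~\ref{1_part:dominated_convergence_theorem_3}, I would apply \cref{1_res:fatou_lemma} to the non-negative sequence $h_k\coloneqq 2g-\abs{f_k-f}\in\posmeasfun$, which converges pointwise to $2g$ and hence has $\liminf_k h_k=2g$. Since $\ointm{\abs{f_k-f}}\leq\ointm{2g}$ is finite, linearity yields $\ointm{h_k}=\ointm{2g}-\ointm{\abs{f_k-f}}$, and the finite element $\ointm{2g}$ can be pulled out of both the $\bigwedge_{k\geq n}$ and the $\bigvee_n$ through the relevant parts of \cref{1_res:operations_in_extended_space} (noting that $\bigvee_{k\geq n}\ointm{\abs{f_k-f}}$ does exist in $\os$ by \sDc ness together with the finite upper bound $\ointm{2g}$). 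Fatou's conclusion thus rearranges to
\[
\ointm{2g}\leq\ointm{2g}-\bigwedge_{n=1}^\infty\bigvee_{k=n}^\infty\ointm{\abs{f_k-f}},
\]
and cancelling the finite $\ointm{2g}$ combined with the non-negativity of every $\ointm{\abs{f_k-f}}$ forces the remaining infimum to equal $0$.

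For part~\ref{1_part:dominated_convergence_theorem_4}, the plan is to apply \cref{1_res:fatou_lemma} twice, to $g+f_n\to g+f$ and to $g-f_n\to g-f$, and cancel the finite $\ointm{g}$ on both sides in each case. This produces the sandwich
\[
\bigwedge_{n=1}^\infty\bigvee_{k=n}^\infty\ointm{f_k}\leq\ointm{f}\leq\bigvee_{n=1}^\infty\bigwedge_{k=n}^\infty\ointm{f_k}.
\]
The sequence $\seq{\ointm{f_k}}$ is order bounded between $-\ointm{g}$ and $\ointm{g}$, so \cref{1_res:liminf_and_limsup} applies and supplies the reverse comparison $\bigvee_n\bigwedge_{k\geq n}\ointm{f_k}\leq\bigwedge_n\bigvee_{k\geq n}\ointm{f_k}$, collapsing the sandwich to equality of all three expressions.

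The main obstacle is not the high-level strategy\textemdash which is exactly the classical argument\textemdash but the bookkeeping with suprema, infima, and differences in $\osext$ against which the introduction explicitly cautions. Every cancellation of a finite element from both sides of an inequality, and every exchange of a difference with a $\bigwedge$ or $\bigvee$, has to be underwritten by the technical apparatus of \cref{1_sec:partially_ordered_vector_spaces}, and the existence of each of the suprema and infima involved has to be guaranteed by \sDc ness together with a concrete finite bound coming from $\ointm{g}$. In particular, one cannot simply imitate the scalar proof that uses $\abs{\int f\di{\npm}}\leq\int\abs{f}\di{\npm}$; the intermediate step~\ref{1_part:dominated_convergence_theorem_3} must be deduced through the Fatou route sketched above rather than from a triangle-inequality estimate.
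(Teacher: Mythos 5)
Your proposal is correct and follows essentially the same route as the paper's own proof: the same reduction to pointwise hypotheses on the complement of a null set, the same application of Fatou's lemma to $2g-\abs{f_k-f}$ for part~\ref{1_part:dominated_convergence_theorem_3} and to $g\pm f_n$ for part~\ref{1_part:dominated_convergence_theorem_4}, the same cancellation of the finite element $\ointm{2g}$, and the same final appeal to \cref{1_res:liminf_and_limsup} to collapse the sandwich. The points you flag as requiring care (existence of the suprema and infima via the bound $\ointm{g}$, and justifying each cancellation in $\osext$) are exactly the ones the paper attends to, so no gap remains.
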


\begin{proof}

In view of \cref{1_res:zero_integral_and_almost_everywhere_zero}, we may, by redefining all $f_n,\,f$, and $g$ to be zero on a measurable subset of measure zero, suppose that $g\in\posmeasfun$, that $\abs{f_n(\pt)}\leq g(\pt)$ for all $\pt\in\pset$, and that $f_n(\pt)\to f(\pt)$ for all $\pt\in\pset$. Since then also $\abs{f}(\pt)\leq g(\pt)$ for all $\pt\in\pset$, the $f_n$ and $f$ are in $\integrablefun$.

We turn to part~\ref{1_part:dominated_convergence_theorem_3}. Since $2g-\abs{f_n-f}\geq 0$ pointwise, \cref{1_res:fatou_lemma} shows that
\begin{align*}
\ointm{2g}&=\ointm{\liminf_{n\geq 1}\left(2g-\abs{f_n-f}\right)}\\
&\leq\bigvee_{n=1}^\infty\bigwedge_{k=n}^\infty\ointm{\left(2g-\abs{f_n-f}\right)}\\
&=\ointm{2g}-\bigwedge_{n=1}^\infty\bigvee_{k=n}^\infty\ointm{\abs{f_n-f}},
\end{align*}
where the final equality is valid since the integrals $\ointm{\left(g-\abs{f_n-f}\right)}$, $\ointm{g}$, and $\ointm{\abs{f_n-f}}$ all lie in the finite order interval $[0,2\ointm{g}]$ of $\os$. Cancelling the finite element $\ointm{2g}$, we see that $\bigwedge_{n=1}^\infty\bigvee_{k=n}^\infty\ointm{\abs{f_n-f}}\leq 0$. Since the reverse inequality is obvious, the proof of part~\ref{1_part:dominated_convergence_theorem_3} is complete.

We turn to part~\ref{1_part:dominated_convergence_theorem_4}.

Since $g+f_n\geq 0$ for all $n\geq 1$, Fatou's lemma shows that
\begin{align*}
\ointm{\left(g+f\right)}&=\ointm{\liminf_{n\geq 1}\left(g+f_n\right)}\\
&\leq\bigvee_{n=1}^\infty\bigwedge_{k=n}^\infty\ointm{\left(g+f_n\right)}\\
&=\ointm{g}+ \bigvee_{n=1}^\infty\bigwedge_{k=n}^\infty\ointm{f_n},
\end{align*}
from which we see that
\begin{equation}\label{1_eq:dominated_convergence_1}
\ointm{f}\leq \bigvee_{n=1}^\infty\bigwedge_{k=n}^\infty\ointm{f_n}.
\end{equation}

Since $g-f_n\geq 0$ for all $n\geq 1$, Fatou's lemma shows that
\begin{align*}
\ointm{\left(g-f\right)}&=\ointm{\liminf_{n\geq 1}\left(g-f_n\right)}\\
&\leq\bigvee_{n=1}^\infty\bigwedge_{k=n}^\infty\ointm{\left(g-f_n\right)}\\
&=\ointm{g}- \bigwedge_{n=1}^\infty\bigvee_{k=n}^\infty\ointm{f_n},
\end{align*}
from which we see that
\begin{equation}\label{1_eq:dominated_convergence_2}
\bigwedge_{n=1}^\infty\bigvee_{k=n}^\infty\ointm{f_n}\leq\ointm{f}.
\end{equation}

Combining \cref{1_eq:dominated_convergence_1,1_eq:dominated_convergence_2} with \cref{1_res:liminf_and_limsup}, we have
\[
\bigwedge_{n=1}^\infty\bigvee_{k=n}^\infty\ointm{f_n}\leq\ointm{f}\leq\bigvee_{n=1}^\infty\bigwedge_{k=n}^\infty\ointm{f_n}\leq\bigwedge_{n=1}^\infty\bigvee_{k=n}^\infty\ointm{f_n},
\]
which completes the proof of part~\ref{1_part:dominated_convergence_theorem_4}.
\end{proof}

\subsection{$\boldsymbol{\lebfont{L}^1}$-spaces and $\boldsymbol{\Ell^1}$-spaces}\label{1_subsec:L_1_spaces}

In this section, we collect some vector lattice properties of $\integrablefun$ and its quotient space $\ellone$ that will be defined below.

\begin{proposition}\label{1_res:basic_properties_of_integrablefun}
	Let $\msm$ be a measure space, where $\alg$ is a $\sigma$-algebra.
	\begin{enumerate}
		\item\label{1_part:basic_properties_of_integrablefun_1}
		$\integrablefun$ is an order ideal of the vector lattice $\measfun$. As a consequence,  it is a \sDc\ vector lattice.
		\item\label{1_part:basic_properties_of_integrablefun_1_part:basic_properties_of_integrablefun_2}
		The positive operator $f\mapsto\ointm{f}$ from $\integrablefun$ into $\os$ is $\sigma$-order continuous.
		\item\label{1_part:basic_properties_of_integrablefun_1_part:basic_properties_of_integrablefun_3}
		$\aezerofun$ is a $\sigma$-order ideal of $\integrablefun$.
	\end{enumerate}
\end{proposition}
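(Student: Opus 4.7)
For part~\ref{1_part:basic_properties_of_integrablefun_1}, my first step is to verify that $\integrablefun$ is a solid linear subspace of the vector lattice $\measfun$. Linearity follows from the inequality $\abs{rf+sg}\leq\abs{r}\abs{f}+\abs{s}\abs{g}$ combined with positive homogeneity and additivity of the integral on $\posextmeasfun$ (\cref{1_res:integral_on_positive_functions_has_usual_properties}). Solidity is immediate from the monotonicity of that integral: if $g\in\measfun$ and $f\in\integrablefun$ with $\abs{g}\leq\abs{f}$, then $\ointm{\abs{g}}\leq\ointm{\abs{f}}$ is finite. Since $\measfun$ is $\sigma$-Dedekind complete\textemdash an order bounded countable family of measurable functions has a pointwise supremum that is again measurable and finite-valued\textemdash and since every order ideal of a $\sigma$-Dedekind complete vector lattice inherits $\sigma$-Dedekind completeness, the second assertion follows.

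For part~\ref{1_part:basic_properties_of_integrablefun_1_part:basic_properties_of_integrablefun_2}, it suffices to show that $\ointm{f_n}\downarrow 0$ in $\os$ whenever $f_n\downarrow 0$ in $\integrablefun$. Because $\integrablefun$ is an order ideal of $\measfun$, the infimum in $\integrablefun$ coincides with the infimum in $\measfun$, which is the pointwise infimum. Hence $f_n(\pt)\downarrow 0$ for every $\pt\in\pset$ and, in particular, $f_n\geq 0$ for each $n$. Since $f_1\in\integrablefun$ has finite integral, \cref{1_res:monotone_convergence_theorem_for_decreasing_sequences} yields $\ointm{f_n}\downarrow\ointm{0}=0$, as required.

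For part~\ref{1_part:basic_properties_of_integrablefun_1_part:basic_properties_of_integrablefun_3}, the inclusion $\aezerofun\subseteq\integrablefun$ is immediate from \cref{1_res:zero_integral_and_almost_everywhere_zero}. That $\aezerofun$ is a linear subspace is clear, and it is solid in $\integrablefun$: if $\abs{g}\leq\abs{f}$ with $f\in\aezerofun$, then $\abs{g(\pt)}\leq\abs{f(\pt)}=0$ for almost every $\pt$, so $g\in\aezerofun$. To check the $\sigma$-ideal property, suppose $\seq{f}\subseteq\aezerofun$ and $f=\bigvee_{n=1}^\infty f_n$ exists in $\integrablefun$. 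Because $\integrablefun$ is an order ideal of $\measfun$, this supremum coincides with the pointwise supremum, so $f(\pt)=\sup_{n\geq 1} f_n(\pt)$ for every $\pt$. For each $n$ choose a null set $N_n$ with $f_n(\pt)=0$ for $\pt\notin N_n$; then $N\coloneqq\bigcup_{n=1}^\infty N_n$ is a null set by the $\sigma$-sub-additivity in \cref{1_res:all_to_be_expected}, and for $\pt\notin N$ every $f_n(\pt)=0$, so $f(\pt)=0$. Hence $f\in\aezerofun$.

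There is no serious obstacle: each claim reduces to (i) the elementary identification of the lattice operations of an order ideal of $\measfun$ with pointwise operations, and (ii) an invocation of a result already in place\textemdash solidity from monotonicity of the positive integral, $\sigma$-order continuity from \cref{1_res:monotone_convergence_theorem_for_decreasing_sequences}, and $\sigma$-completeness of null sets from \cref{1_res:all_to_be_expected}.
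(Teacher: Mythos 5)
Your argument is correct. For parts \ref{1_part:basic_properties_of_integrablefun_1} and \ref{1_part:basic_properties_of_integrablefun_1_part:basic_properties_of_integrablefun_2} you follow the same route as the paper: the order-ideal property of $\integrablefun$ comes from the monotonicity and additivity of the positive integral (the paper simply refers back to the observation already made after the definition of $\integrablefun$), \sDc ness is inherited by order ideals, and $\sigma$-order continuity is exactly an application of \cref{1_res:monotone_convergence_theorem_for_decreasing_sequences}; your extra remark that the infimum in the ideal is the pointwise infimum is a useful explicit justification that the paper leaves tacit. For part \ref{1_part:basic_properties_of_integrablefun_1_part:basic_properties_of_integrablefun_3} you take a genuinely different route. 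The paper identifies $\aezerofun$ with the null ideal $\{f\in\integrablefun : \ointm{\abs{f}}=0\}$ via \cref{1_res:zero_integral_and_almost_everywhere_zero} and then invokes the general fact that the null ideal of a $\sigma$-order continuous positive operator is a $\sigma$-ideal, so its part \ref{1_part:basic_properties_of_integrablefun_1_part:basic_properties_of_integrablefun_3} is a corollary of part \ref{1_part:basic_properties_of_integrablefun_1_part:basic_properties_of_integrablefun_2}. You instead argue directly: a supremum in $\integrablefun$ of a sequence from $\aezerofun$ is the pointwise supremum, and a countable union of null sets is null by the $\sigma$-sub-additivity in \cref{1_res:all_to_be_expected}. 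Your version is more elementary and independent of part \ref{1_part:basic_properties_of_integrablefun_1_part:basic_properties_of_integrablefun_2}, at the cost of redoing by hand what the abstract null-ideal argument gives for free; the paper's version is shorter and exhibits the structural reason ($\aezerofun$ is a null ideal) behind the statement. Both are complete proofs.
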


\begin{proof}
	It was already observed in \cref{1_subsec:convergence_theorems} that $\integrablefun$ is an order ideal of $\measfun$, and then it inherits the \sDc ness of $\measfun$.
	
It follows from	\cref{1_res:monotone_convergence_theorem_for_decreasing_sequences} that the order integral is $\sigma$-order continuous.

We have $\aezerofun=\{f\in\integrablefun : \ointm{\abs{f}}=0\}$ by \cref{1_res:zero_integral_and_almost_everywhere_zero}. Hence $\aezerofun$ is the null ideal of the order integral on $\integrablefun$. Since this is a $\sigma$-order continuous operator, it now follows that $\aezerofun$ is a $\sigma$-ideal of $\integrablefun$.
\end{proof}

We shall now introduce the generalisation of the classical $\Ell^1$-space to the vector-valued case.

Since $\aezerofun$ is an order ideal of $\integrablefun$, the quotient space
\[
\ellone\coloneqq\integrablefun/\aezerofun
\]
is again a vector lattice when it is supplied with the partial ordering that is defined by the image of the positive cone $\posintegrablefun$ of $\integrablefun$ under the quotient map. The quotient map is then a vector lattice homomorphism. We shall write $[f]$ for the image of $f\in\integrablefun$ under the quotient map. In \cref{1_res:ellone_is_dedekind_complete}, we shall give sufficient conditions on $\os$ for $\ellone$ to be \Dc. For this, we need preparations that are of some independent interest.

We say that partially ordered vector space has the \emph{countable sup property} when, for every net $\net{x}\subseteq\posos$ and $x\in\pos{\os}$ such that $x_\lambda\uparrow x$, there exists an at most countably infinite set of indices $\{\lambda_n: n\geq 1 \}$ such that $x=\sup_{n\geq 1} x_{\lambda_n}$. In this case, there also always exist indices $\lambda_1\leq\lambda_2\leq\dotsb$ such that $x_{\lambda_n}\uparrow x$.\footnote{Strictly speaking, it would be better to call this property the \emph{monotone} countable sup property. We have refrained from doing so to keep the terminology somewhat simpler.} For vector lattices, our countable sup property is equivalent to what is usually called the countable sup property in that context; namely, that every subset that has a supremum contains an at most countably infinite subset with the same supremum.  In some sources, a vector lattice with this property is then said to be order separable. As usual, we shall say that a positive operator between two partially ordered vector spaces is \emph{strictly positive} when the intersection of its kernel with the positive cone of the domain is $\{0\}$. The proof of our next result is inspired by \cite[p.~65--66]{zaanen_INTRODUCTION_TO_OPERATOR_THEORY_IN_RIESZ_SPACES:1997}.

\begin{lemma}\label{1_res:key_lemma_for_completeness}
	Let $\os$ be a \sDc\ vector lattice, let $\ostwo$ be a partially ordered vector space, and let $\posmapT:\os\to \ostwo$ be a strictly positive $\sigma$-order continuous operator. Suppose that $S$ is a non-empty subset of $\os$ that is bounded above in $\os$, and that $\seq{t}$ is a sequence in $S$ such that
	\begin{enumerate}
		\item\label{1_part:key_lemma_for_completeness_1}
		$S$ is closed under the taking of finite suprema;
		\item\label{1_part:key_lemma_for_completeness_2}
		$\sup \posmapT(S)$ and $\sup\{\posmapT(t_n): n\geq  1\}$ both exist in $\ostwo$ and are equal.
	\end{enumerate}
	Then $\sup S$ exists in $\os$. Moreover, if we set $s_1\coloneqq t_1$, $s_2\coloneqq t_1\vee t_2$, $s_3\coloneqq t_1\vee t_2\vee t_3$, \ldots, then $\seq{s}$ is a sequence in $S$ such that $s_n\uparrow \sup S$ in $\os$ and $\posmapT(s_n)\uparrow \sup \posmapT(S)$ in $\ostwo$. Consequently, $\posmapT(\sup S)=\sup\posmapT(S)$.
\end{lemma}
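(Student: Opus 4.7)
The plan is to build the sequence $\seq{s}$ explicitly, use $\sigma$-\Dc ness of $\os$ to form its supremum $s$, and then use strict positivity of $\posmapT$ to upgrade $s$ from an upper bound of the $s_n$ to an upper bound of all of $S$.

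First, I would set $s_n\coloneqq t_1\vee\dotsb\vee t_n$ for $n\geq 1$. Hypothesis~\ref{1_part:key_lemma_for_completeness_1} gives $s_n\in S$, and by construction $s_n\uparrow$. Since $S$ is bounded above in $\os$ and $\os$ is \sDc, there exists $s\coloneqq\bigvee_{n=1}^\infty s_n\in\os$. The $\sigma$-order continuity of $\posmapT$ yields $\posmapT(s_n)\uparrow\posmapT(s)$ in $\ostwo$. Combining $\posmapT(s_n)\geq\posmapT(t_n)$ (positivity of $\posmapT$, plus $s_n\geq t_n$) with $\posmapT(s_n)\in\posmapT(S)$, one gets the sandwich
\[
\sup_{n\geq 1}\posmapT(t_n)\leq\sup_{n\geq 1}\posmapT(s_n)\leq\sup\posmapT(S),
\]
and by hypothesis~\ref{1_part:key_lemma_for_completeness_2} the two outer suprema agree. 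Hence $\posmapT(s)=\sup_{n\geq 1}\posmapT(s_n)=\sup\posmapT(S)$, which already disposes of the final conclusion of the lemma once we know $s=\sup S$.

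The main step, and the one I expect to be the delicate one, is showing that $s$ is an upper bound for $S$. Fix an arbitrary $u\in S$. Closure of $S$ under finite suprema gives $u\vee s_n\in S$ for every $n\geq 1$, so $\posmapT(u\vee s_n)\leq\sup\posmapT(S)=\posmapT(s)$. Since $\os$ is a vector lattice and $s_n\uparrow s$, we have $u\vee s_n\uparrow u\vee s$ in $\os$ (a standard property of the lattice operations in a \sDc\ vector lattice, obtained from $u\vee s_n = u + (s_n - u)^+$ and the compatibility of $\vee$ with increasing sequences). The $\sigma$-order continuity of $\posmapT$ then yields $\posmapT(u\vee s_n)\uparrow\posmapT(u\vee s)$, so $\posmapT(u\vee s)\leq\posmapT(s)$, i.e., $\posmapT\bigl((u\vee s)-s\bigr)\leq 0$. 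But $(u\vee s)-s\geq 0$, so strict positivity of $\posmapT$ forces $(u\vee s)-s=0$, i.e., $u\leq s$. Thus $s$ is an upper bound of $S$.

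Finally, any upper bound $v$ of $S$ satisfies $v\geq s_n$ for all $n$ (since each $s_n\in S$), hence $v\geq\bigvee_{n=1}^\infty s_n=s$. Therefore $s=\sup S$ and $s_n\uparrow\sup S$, which together with the computation above gives $\posmapT(s_n)\uparrow\sup\posmapT(S)$ and $\posmapT(\sup S)=\sup\posmapT(S)$. The only subtlety worth highlighting is that strict positivity of $\posmapT$ is what allows a mere inequality in $\ostwo$ to be promoted to an equality in $\os$; without it, $s$ would only dominate each $u\in S$ after applying $\posmapT$, not in $\os$ itself.
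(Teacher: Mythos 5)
Your proposal is correct and follows essentially the same route as the paper's proof: form $s_n=t_1\vee\dotsb\vee t_n$, obtain $s=\bigvee_n s_n$ from $\sigma$-Dedekind completeness, identify $\posmapT(s)=\sup\posmapT(S)$ via hypothesis~(2), and then use $u\vee s_n\uparrow u\vee s$ together with $\sigma$-order continuity and strict positivity to show $s$ dominates every $u\in S$. The only difference is cosmetic ordering of the first steps, so there is nothing to add.
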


\begin{proof}
	Since $s_n\in S$ and $s_n\geq t_n$ for $n\geq 1$, it is clear that $\posmapT(s_n)\uparrow \sup\posmapT(S)$. Because $\seq{s}$ is an increasing sequence in the bounded above set $S$, there exists an $s_\infty\in\os$ such that $s_n\uparrow s_\infty$ in $\os$. Hence $\posmapT(s_n)\uparrow \posmapT(s_\infty)$, so that $\posmapT(s_\infty)=\sup\posmapT(S)$.
	
	We claim that $s_\infty$ is an upper bound for $S$. To see this, take $s\in S$. Since $s\vee s_n\uparrow s\vee s_\infty$, we have $\sup\{\posmapT(s\vee s_n) : n\geq 1\}=\posmapT(s\vee s_\infty)$. Using that $s\vee s_n\in S$ for $n\geq 1$, we have	
	\begin{align*}
		\sup\posmapT(S)&\geq \sup\{\posmapT(s\vee s_n) : n\geq 1\}\\
		&=\posmapT(s\vee s_\infty)\\
		&\geq\posmapT(s_\infty)\\
		&=\sup\posmapT(S).
	\end{align*}
	It follows that $\posmapT(s\vee s_\infty-s_\infty)=0$. Since $\posmapT$ is strictly positive, this implies that $s_\infty \geq s$, as desired.
	
	Let $u$ be an upper bound for $S$. Then certainly $u\geq s_n$ for $n\geq 1$, so that $u\geq s_\infty$.
	
	We conclude that $s_\infty=\sup S$. This completes the proof.
\end{proof}

The following is an easy consequence of \cref{1_res:key_lemma_for_completeness}. The special case where $\ostwo$ is a vector lattice follows from  \cite[Exercise~1.4.2.a]{aliprantis_burkinshaw_POSITIVE_OPERATORS_SPRINGER_REPRINT:2006}.

\begin{proposition}\label{1_res:completeness_pulled_back}
	Let $\os$ be a \sDc\ vector lattice, let $\ostwo$ be a partially ordered vector space that is \mc\ and has the countable sup property, and let $\posmapT:\os\to \ostwo$ be a strictly positive $\sigma$-order continuous operator.
	
	Then $\os$ is \Dc\ and has the countable sup property. Moreover, $\posmapT$ is order continuous.
\end{proposition}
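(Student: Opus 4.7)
The plan is to invoke \cref{1_res:key_lemma_for_completeness} twice: first to upgrade the \sDc ness of $\os$ to \Dc ness, and then to transfer the countable sup property from $\ostwo$ to $\os$. Order continuity of $\posmapT$ will then follow as a quick corollary of this countable sup property together with the hypothesised $\sigma$-order continuity of $\posmapT$. For \Dc ness, I would start with a non-empty subset $S\subseteq\os$ that is bounded above, and replace it by $S^\prime$, the set of all finite suprema (in the vector lattice $\os$) of elements of $S$. This enlargement leaves the set of upper bounds unchanged but makes $S^\prime$ closed under finite suprema, and hence by positivity of $\posmapT$ makes $\posmapT(S^\prime)$ directed upward in $\ostwo$. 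Being also bounded above by $\posmapT(u)$ for any upper bound $u$ of $S$, the \mc ness of $\ostwo$ delivers $\sup\posmapT(S^\prime)$ in $\ostwo$, and the countable sup property of $\ostwo$ then yields a sequence $\seq{t}\subseteq S^\prime$ with $\sup_n\posmapT(t_n)=\sup\posmapT(S^\prime)$. The hypotheses of \cref{1_res:key_lemma_for_completeness} are met, so $\sup S^\prime=\sup S$ exists in $\os$.

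For the countable sup property of $\os$, I would take an increasing net $\net{x}\subseteq\posos$ with $x_\lambda\uparrow x$ and again form $S^\prime$, the set of finite suprema $x_{\lambda_1}\vee\cdots\vee x_{\lambda_k}$. Directedness of $\Lambda$ ensures that every such element is dominated by some $x_\mu$, so $\sup S^\prime=x$. Repeating the verification from the previous paragraph, \cref{1_res:key_lemma_for_completeness} applies to $S^\prime$, and its second conclusion produces an increasing sequence $\seq{\tilde s}$ in $S^\prime$ with $\tilde s_n\uparrow x$ in $\os$. Each $\tilde s_n$ is manufactured from only finitely many indices of the original net; letting $I\subseteq\Lambda$ be the union over $n$ of these finitely many index sets yields a countable subset of $\Lambda$ with $\tilde s_n\leq \sup\{x_\lambda:\lambda\in I\}\leq x$ for all $n$, so that $\sup\{x_\lambda:\lambda\in I\}=x$, as required.

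Finally, for order continuity of $\posmapT$, I would let $x_\lambda\uparrow x$ in $\os$ and use the just-established countable sup property to pick indices $\lambda_1\leq\lambda_2\leq\cdots$ with $x_{\lambda_n}\uparrow x$. The $\sigma$-order continuity of $\posmapT$ then gives $\posmapT(x_{\lambda_n})\uparrow\posmapT(x)$ in $\ostwo$, and the trivial bounds $\posmapT(x_{\lambda_n})\leq\posmapT(x_\lambda)\vee\posmapT(x_{\lambda_n})\leq\posmapT(x)$ squeeze the full image net to the same supremum, which is exactly order continuity. The main obstacle I anticipate is the bookkeeping in the second paragraph: I need the countable sequence produced by \cref{1_res:key_lemma_for_completeness} to yield a countable sub\emph{family} of the original net rather than just a cofinal countable collection of finite suprema. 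This is resolved by the observation that each $\tilde s_n$ involves only finitely many indices of the net, but it is the one step where one has to be careful not to conflate the two. Once past this, everything else reduces to routine verification of hypotheses and direct appeal to the definitions.
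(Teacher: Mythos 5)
Your proposal is correct and follows essentially the same route as the paper: both reduce everything to \cref{1_res:key_lemma_for_completeness} applied to the set of finite suprema, and your derivation of order continuity (countable sup property of $\os$ plus $\sigma$-order continuity of $\posmapT$) is a harmless variant of the paper's argument via the final conclusion of that lemma and the interlacing of $S$ and $S^\vee$. One cosmetic caveat: the join $\posmapT(x_\lambda)\vee\posmapT(x_{\lambda_n})$ you write down lives in $\ostwo$, which need not be a lattice; but your squeeze only needs that every upper bound of the full net $\{\posmapT(x_\lambda)\}_{\lambda\in\Lambda}$ dominates the sequence $\{\posmapT(x_{\lambda_n})\}_{n=1}^\infty$ and hence its supremum $\posmapT(x)$, so the argument stands without that join.
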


\begin{proof}
Let $S$ be a non-empty subset of $\os$ that is bounded above. Set $S^\vee\coloneqq\{s_1\vee\dotsb s_k : k\geq1,\,s_1,\dotsc,s_k\in S\}$. Then $\sup S^\vee$ exists by \cref{1_res:key_lemma_for_completeness}. Hence $\os$ is \Dc. \cref{1_res:key_lemma_for_completeness} also supplies a sequence $\{s_n^\vee\}_{n=1}^\infty$ in $S^\vee$ such that $s_n^\vee\uparrow \sup S^\vee=\sup S$. This implies that $\os$ has the countable sup property.

To show that $\posmapT$ is order continuous, take a non-empty upward directed subset $S$ of $\os$ that is bounded above. Using the final statement in \cref{1_res:key_lemma_for_completeness}, we have  $\posmapT(\sup S)=\posmapT(\sup S^\vee)=\sup\posmapT(S^\vee)$. Because $S$ is upward directed, the subsets $S^\vee$ and $S$ of $\os$ are interlaced in the partial ordering on $\os$. Since $\posmapT$ is positive, the same is true for their images $\posmapT(S^\vee)$ and $\posmapT(S)$ in the partial ordering on $\ostwo$. Hence $\sup\posmapT(S^\vee)=\sup\posmapT(S)$. This completes the proof.
\end{proof}

We now come to the vector lattice properties of $\ellone$.

\begin{theorem}\label{1_res:ellone_is_dedekind_complete}
	Let $\msm$ be a measure space, where $\alg$ is a $\sigma$-algebra.
	
	The quotient map from $\integrablefun$ onto $\ellone$ is a $\sigma$-order continuous vector lattice homomorphism, and $\ellone$ is a \sDc\ vector lattice. The map $\opintm: \ellone\to\os$, defined by setting $\opintm([f])\coloneqq\ointm{f}$ for $[f]\in\ellone$, is well defined, linear, strictly positive, and $\sigma$-order continuous.
	
	If $\os$ is \mc\ and has the countable sup property, then $\ellone$ is a \Dc\  vector lattice with the countable sup property. Moreover, $\opintm$ is then order continuous.
\end{theorem}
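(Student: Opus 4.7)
The plan is to establish the four assertions of the theorem in sequence, finishing with an application of \cref{1_res:completeness_pulled_back} for the final clause. Throughout I would exploit that the order on $\measfun$ is literally pointwise, whereas the order on the quotient $\ellone$ reduces to pointwise almost-everywhere inequality between representatives.

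First I would show that the quotient map $q\colon\integrablefun\to\ellone$ is a $\sigma$-order continuous vector lattice homomorphism and that $\ellone$ is a \sDc\ vector lattice. Since $\aezerofun$ is an order ideal of the vector lattice $\integrablefun$ by \cref{1_res:basic_properties_of_integrablefun}, the quotient $\ellone$ is a vector lattice and $q$ is a vector lattice homomorphism by the standard quotient construction. For the $\sigma$-order continuity of $q$, suppose $f_n\downarrow 0$ in $\integrablefun$; since the order on $\integrablefun$ is the restriction of the pointwise order on $\measfun$, this means $\inf_n f_n(\pt)=0$ for every $\pt\in\pset$. If $h\in\integrablefun$ satisfies $[h]\leq[f_n]$ for every $n$, then for each $n$ there is a null set $N_n$ with $h\leq f_n$ off $N_n$; off the null set $\bigcup_n N_n$ one then has $h\leq\inf_n f_n=0$, so $[h]\leq 0$. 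This yields $[f_n]\downarrow 0$ in $\ellone$. For the \sDc ness of $\ellone$, let $[f_n]$ be increasing in $\ellone$ with upper bound $[h]$. Replacing each $f_n$ by $\bigvee_{k\leq n}(f_k\wedge h)$ produces a pointwise-increasing sequence in $\integrablefun$ that represents the same elements of $\ellone$ and is bounded pointwise by $h$; its pointwise supremum lies in $\integrablefun$ and maps under $q$ to the supremum of $[f_n]$ in $\ellone$ by the $\sigma$-order continuity of $q$ just established.

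Second I would verify the properties of $\opintm$. Well-definedness and linearity are immediate because the order integral vanishes on $\aezerofun$ and is linear on $\integrablefun$. Strict positivity follows from \cref{1_res:zero_integral_and_almost_everywhere_zero}: if $[f]\geq 0$ in $\ellone$ and $\opintm([f])=0$, a representative $f\geq 0$ a.e.\ satisfies $\ointm{f}=0$ and is hence $0$ a.e., so $[f]=0$. For $\sigma$-order continuity, take $[f_n]\downarrow 0$ in $\ellone$ and, by discarding the countable union of null sets where monotonicity or non-negativity of the representatives fails, choose $\widetilde f_n\geq 0$ that are pointwise decreasing and represent $[f_n]$. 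Their pointwise infimum $g$ is measurable with $0\leq g\leq \widetilde f_1\in\integrablefun$, and $[g]\leq[f_n]$ for every $n$ together with $[g]\geq 0$ forces $[g]=0$, i.e.\ $g=0$ a.e. Since $\ointm{\widetilde f_1}$ is finite, \cref{1_res:monotone_convergence_theorem_for_decreasing_sequences} yields $\opintm([f_n])=\ointm{\widetilde f_n}\downarrow\ointm{g}=0$ in $\os$.

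Finally, under the extra hypotheses that $\os$ is \mc\ with the countable sup property, $\opintm\colon\ellone\to\os$ is a strictly positive $\sigma$-order continuous operator from the \sDc\ vector lattice $\ellone$ into such an $\os$, so \cref{1_res:completeness_pulled_back} delivers the conclusions directly: $\ellone$ is \Dc\ with the countable sup property, and $\opintm$ is order continuous. The only real subtlety lies in the $\sigma$-order continuity arguments above, where one must convert the quotient ordering on $\ellone$ into genuine pointwise monotonicity of chosen representatives so that \cref{1_res:monotone_convergence_theorem_for_decreasing_sequences} becomes applicable; the rest is bookkeeping relative to lemmas already established.
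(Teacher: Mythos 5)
Your proof is correct and follows essentially the same route as the paper: establish the vector lattice and $\sigma$-order continuity properties of the quotient map, lift increasing bounded sequences to obtain the \sDc ness of $\ellone$, verify that $\opintm$ is well defined, strictly positive and $\sigma$-order continuous via \cref{1_res:monotone_convergence_theorem_for_decreasing_sequences}, and then invoke \cref{1_res:completeness_pulled_back} for the final clause. The only divergence is that you prove the $\sigma$-order continuity of the quotient map directly from the pointwise ordering (and spell out the lifting of representatives), where the paper instead cites a general theorem on quotients by $\sigma$-order ideals; this is a harmless, self-contained substitution.
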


\begin{proof}
		Since we know from part~\ref{1_part:basic_properties_of_integrablefun_1_part:basic_properties_of_integrablefun_3} of \cref{1_res:basic_properties_of_integrablefun} that $\aezerofun$ is a $\sigma$-order ideal of $\integrablefun$, it follows from
		\cite[Theorem~18.11]{luxemburg_zaanen_RIESZ_SPACES_VOLUME_I:1971} that the quotient map is $\sigma$-order continuous. Since an increasing sequence in $\ellone$ and an upper bound of it can be lifted to $\integrablefun$, the \sDc ness of $\integrablefun$ and the $\sigma$-order continuity of the quotient map then show that $\ellone$ is \sDc.
	
	It is clear that the operator $\opintm$ can be defined on $\ellone$ and that it is strictly positive. Its $\sigma$-order continuity is easily seen to follow from \cref{1_res:monotone_convergence_theorem_for_decreasing_sequences}.
	
	An appeal to \cref{1_res:completeness_pulled_back} yields the remainder of the statements.
\end{proof}

\section{Comparison with positive vector measures and their integrals}\label{1_sec:comparison_with_banach_space_case}

\noindent Let $\ms$ be a measurable space, where $\alg$ is a $\sigma$-algebra, and suppose that $\os$ is a \smc\ Banach space with a closed positive cone. In this section, we shall discuss the relation between the measures in the present paper and vector measures in the classical sense, as well as the relation between the corresponding integrals.

 We recall that a classical (as we shall call it for the sake of discussion) vector measure with values in $\os$ is a map $\npm:\alg\to\os$ such that $\npm(\emptyset)=0$ and
\begin{equation}\label{1_eq:vector_measure_sigma_additivity}
\npm\left(\bigcup_{n=1}^\infty\mss_n\right)=\sum_{n=1}^\infty\npm(\mss_n),	
\end{equation}
whenever $\seq{\mss}$ is a pairwise disjoint sequence in $\alg$. Here the series in the right hand side of \cref{1_eq:vector_measure_sigma_additivity} is convergent in the norm of $\os$. As is well known, the fact that complex measures are bounded (see \cite[Theorem~6.4]{rudin_REAL_AND_COMPLEX_ANALYSIS_THIRD_EDITION:1987}, for example), when combined with the uniform boundedness principle, implies that a vector measure is automatically norm bounded on $\alg$. This makes clear that the integral that is canonically defined on the elementary functions extends by continuity to a norm-to-norm continuous classical $\os$-valued integral on the bounded measurable functions on $\pset$.

 Obviously, there is no classical vector measure that can be an analogue of an infinite $\pososext$-valued measure in our sense. It is meaningful, however, to ask for the relation between $\posos$-valued measures in the sense of our \cref{1_def:positive_pososext_valued_measure} and classical $\posos$-valued vector measures.\footnote{Classical, not necessarily positive, vector measures with values in a Banach lattice are the subject of \cite[Chapter~III]{groenewegen_THESIS:1983} and \cite[Chapter~IV]{jeurnink_THESIS:1983}.} To clarify this, we start by observing that, for a net $\net{x}$ in $\os$ and an element $x$ of $\os$, the facts that $x_\lambda\uparrow$ and that $x_\lambda\to x$ in norm imply that also $x_\lambda\uparrow x$. This follows readily from the hypothesis that $\posos$ be closed. It is then immediate that a classical $\posos$-valued vector measure is also an $\posos$-valued measure in our sense. When the norm on $\os$ has the property that $\norm{x-x_n}\to 0$ for every sequence $\seq{x}$ in $\posos$ and $x\in\posos$ such that $x_n\uparrow x$, then the converse also holds, so that the two notions coincide. When the norm on the Banach space $\os$ fails to be $\sigma$-monotone order continuous in the sense as just described, then it can actually occur that there exists an $\posos$-valued measure in our sense that is not a classical $\posos$-valued vector measure. As an example, we take $\os=\ell^\infty$, and we let $\seq{e}$ denote the sequence of standard unit vectors in it. For $X$ we take $\NN$, and for $\alg$ we take the power set of $\NN$. For $\mss\in\alg$, we set $\npm(\mss)\coloneqq\bigvee_{n\in\mss}e_n$. Then $\mu$ is an $\pos{\left(\ell^\infty\right)}$-valued measure in our sense, but it is not a classical vector measure. Indeed, the terms of the series in the aspired equality $\npm(\NN)=\sum_{n=1}^\infty \npm(\{n\})=\sum_{n=1}^\infty e_n$ do not even converge to zero in norm.

 Because $\ell^\infty$ can be embedded isometrically as a Banach lattice into the regular operators on $\ell^p$ for $1\leq p\leq\infty$, this example also shows that the two types of measures do not even coincide when they are required to be operator-valued, which is one of our important classes of applications of the results of the current paper in \cite{de_jeu_jiang:2021b,de_jeu_jiang:2021c}. The reader may at this point wish to recall the \cref{1_res:measures_with_values_in_the_regular_operators,1_res:measures_with_values_in_L_sa}, pointing out the role of the strong operator topology rather than the uniform topology for our operator-valued measures.

 To continue our discussion, we take $\os=\ell^\infty$ and $X=\NN$ again, and we let $\alg$ be the power set of $\NN$ again. We define the map $\npm:\alg\to\posos$ by setting $\npm(\mss)\coloneqq\sum_{n\in\mss} e_n/n$ for $\mss\in\alg$. Then $\npm$ is an $\posos$-valued measure in our sense, as well as a classical $\posos$-valued vector measure. It is easy to see\textemdash by using the automatic continuity of positive operators between Banach lattices, for example\textemdash that the classical $\ell^\infty$-valued integral on the Banach lattice of all bounded functions on $\NN$ coincides with our order integral on that space. The space of positive functions that are order integrable is, however, larger than the bounded functions on $\NN$. The unbounded function $f_0:\NN\to\posR$, defined by setting $f_0(n)\coloneqq n$ for $n\in\NN$, is order integrable. Indeed, the elementary positive functions $\varphi_n:\NN\to\posR$ for $n\geq 1$, defined by setting $\varphi_n(k)\coloneqq k$ when $1\leq k\leq n$ and $\varphi_n(k)\coloneqq 0$ when $k>n$, increase pointwise to $f_0$ and the sequence $\left\{\sum_{k=1}^n e_k\right\}_{n=1}^\infty$ of their order integrals has  $\bigvee_{n=1}^\infty e_n$ as its supremum. Of course, there are natural Banach space methods to attempt to extend the domain of the classical integral with respect to the classical vector measure $\npm$. A possible approach is to consider those functions $f:\NN\to\posR$ for which there is a sequence $\seq{\varphi}$ of bounded functions on $\NN$ such that $\varphi_n\to f$ pointwise and such that the sequence $\left\{\int_\NN\!\varphi_n\di{\npm}\right\}_{n=1}^\infty$ of classical integrals is a Cauchy sequence in $\ell^\infty$. One would then have to verify that the limit of this sequence is independent of the choice of the $\varphi_n$, or perhaps restrict oneself to those $f$ for which is the case. Variations on this are also possible. One can, for example, require that the $\varphi_n$ be elementary functions, or start with positive functions and approximate these from below with positive elementary functions. Such Banach space approaches will, however, never lead to the definition of an integral for $f_0$. Indeed, the sequence $\{\varphi_n\}_{n=1}^\infty$ above consists of positive elementary functions and approximates $f_0$ from below, which is arguably the best approximation one could wish for. The sequence $\{\sum_{k=1}^n e_k\}_{n=1}^\infty$ of their classical integrals is, however, not a Cauchy sequence. We thus see that, even when a set map is an $\posos$-valued measure in our sense as well as a classical $\posos$-valued vector measure, it can still happen that the associated order integral is intrinsically more comprehensive than the associated classical integral.

The example in the preceding paragraph can only exist because the norm on $\ell^\infty$ is not $\sigma$-monotone order continuous. Indeed, suppose that the \smc\ partially ordered Banach space $\os$ with a closed positive cone has a $\sigma$-monotone order continuous norm, and that $\npm:\alg\to\posos$ is a set map. The fact that, for a sequence $\seq{x}$ in $\posos$ and an element $x$ of $\posos$, $x_n\uparrow x$ if and only if $x_n\to x$ in norm, does not only imply that $\npm$ is a classical vector measure if and only if it is an $\posos$-valued measure in our sense. It also guarantees that the extension procedure of the domain of the classical integral, using a pointwise approximation of a positive function from below by elementary functions, is, indeed, possible and yields the same set of integrable functions on which the two integral also agree. Since the norms on partially ordered Banach spaces of \emph{operators} are typically not $\sigma$-monotone order continuous, this observation does usually not apply there.

\section*{Acknowledgements}
\noindent The authors thank Marten Wortel for helpful discussions on JBW-algebras.

\bibliographystyle{plain}
\urlstyle{same}

\bibliography{general_bibliography}

\def\cprime{$'$}
  \def\lfhook#1{\setbox0=\hbox{#1}{\ooalign{\hidewidth\lower1.5ex\hbox{'}\hidewidth\crcr\unhbox0}}}
\begin{thebibliography}{10}

\bibitem{abramovich_aliprantis_INVITATION_TO_OPERATOR_THEORY:2002}
Y.A. Abramovich and C.D. Aliprantis.
\newblock {\em An invitation to operator theory}, volume~50 of {\em Graduate
  Studies in Mathematics}.
\newblock American Mathematical Society, Providence, RI, 2002.

\bibitem{alfsen_shultz_STATE_SPACES_OF_OPERATOR_ALGEBRAS:2001}
E.M. Alfsen and F.W. Shultz.
\newblock {\em State spaces of operator algebras}.
\newblock Mathematics: Theory \& Applications. Birkh\"{a}user Boston, Inc.,
  Boston, MA, 2001.
\newblock Basic theory, orientations, and {${\btfs C}\sp{\ast}$}\!-products.

\bibitem{alfsen_shultz_GEOMETRY_OF_STATE_SPACES_OF_OPERATOR_ALGEBRAS:2003}
E.M. Alfsen and F.W. Shultz.
\newblock {\em Geometry of state spaces of operator algebras}.
\newblock Mathematics: Theory \& Applications. Birkh\"{a}user Boston, Inc.,
  Boston, MA, 2003.

\bibitem{aliprantis_burkinshaw_PRINCIPLES_OF_REAL_ANALYSIS_THIRD_EDITION:1998}
C.D. Aliprantis and O.~Burkinshaw.
\newblock {\em Principles of real analysis}.
\newblock Academic Press, Inc., San Diego, CA, third edition, 1998.

\bibitem{aliprantis_burkinshaw_POSITIVE_OPERATORS_SPRINGER_REPRINT:2006}
C.D. Aliprantis and O.~Burkinshaw.
\newblock {\em Positive operators}.
\newblock Springer, Dordrecht, 2006.
\newblock Reprint of the 1985 original.

\bibitem{aliprantis_tourky_CONES_AND_DUALITY:2007}
C.D. Aliprantis and R.~Tourky.
\newblock {\em Cones and duality}, volume~84 of {\em Graduate Studies in
  Mathematics}.
\newblock American Mathematical Society, Providence, RI, 2007.

\bibitem{bauer_MEASURE_AND_INTEGRATION_THEORY:2001}
H.~Bauer.
\newblock {\em Measure and integration theory}, volume~26 of {\em de Gruyter
  Studies in Mathematics}.
\newblock Walter de Gruyter \& Co., Berlin, 2001.

\bibitem{bogachev_MEASURE_THEORY_VOLUME_I:2007}
V.I. Bogachev.
\newblock {\em Measure theory. {V}ol. {I}}.
\newblock Springer-Verlag, Berlin, 2007.

\bibitem{cohn_MEASURE_THEORY_BIRKHAUSER_REPRINT:1993}
D.L. Cohn.
\newblock {\em Measure theory}.
\newblock Birkh\"auser Boston, Inc., Boston, MA, 1993.
\newblock Reprint of the 1980 original.

\bibitem{davidson_C-STAR-ALGEBRAS_BY_EXAMPLE:1996}
K.R. Davidson.
\newblock {\em {${\btfs C}\sp{\ast}$}\!-algebras by example}, volume~6 of {\em
  Fields Institute Monographs}.
\newblock American Mathematical Society, Providence, RI, 1996.

\bibitem{de_jeu_jiang:2021d}
M.~de~Jeu and X.~Jiang.
\newblock {R}iesz representation theorems for lattices of regular operators.
\newblock In preparation; to be made available on arXiv.

\bibitem{de_jeu_jiang:2021c}
M.~de~Jeu and X.~Jiang.
\newblock {R}iesz representation theorems for positive algebra homomorphisms.
\newblock Preprint, 2021. Available at
  \url{https://arxiv.org/pdf/2109.10690.pdf}.

\bibitem{de_jeu_jiang:2021b}
M.~de~Jeu and X.~Jiang.
\newblock {R}iesz representation theorems for positive linear operators.
\newblock Preprint, 2021. Available at
  \url{https://arxiv.org/pdf/2104.12153.pdf}.

\bibitem{de_jeu_ruoff:2016}
M.~de~Jeu and F.~Ruoff.
\newblock Positive representations of {${\btfs C}_0(X)$}, {I}.
\newblock {\em Ann. Funct. Anal.}, 7(1):180--205, 2016.

\bibitem{folland_REAL_ANALYSIS_SECOND_EDITION:1999}
G.B. Folland.
\newblock {\em Real analysis. {M}odern techniques and their applications}.
\newblock Pure and Applied Mathematics. John Wiley \& Sons, Inc., New York,
  second edition, 1999.

\bibitem{groenewegen_THESIS:1983}
G.A.M. Groenewegen.
\newblock {\em On spaces of {B}anach lattice valued functions and measures}.
\newblock PhD thesis, Katholieke Universiteit Nijmegen, Nijmegen, 1982.
\newblock Available at \url{https://repository.ubn.ru.nl/handle/2066/148172}.

\bibitem{hanche-olsen_stormer_JORDAN_OPERATOR_ALGEBRAS:1984}
H.~Hanche-Olsen and E.~St{\o}rmer.
\newblock {\em Jordan operator algebras}, volume~21 of {\em Monographs and
  Studies in Mathematics}.
\newblock Pitman (Advanced Publishing Program), Boston, MA, 1984.

\bibitem{jeurnink_THESIS:1983}
G.A.M. Jeurnink.
\newblock {\em Integration of functions with values in a {B}anach lattice}.
\newblock PhD thesis, Katholieke Universiteit Nijmegen, Nijmegen, 1982.
\newblock Available at \url{https://repository.ubn.ru.nl/handle/2066/148172}.

\bibitem{kadison:1951}
R.V. Kadison.
\newblock Order properties of bounded self-adjoint operators.
\newblock {\em Proc. Amer. Math. Soc.}, 2:505--510, 1951.

\bibitem{kaplin_fijavz_2020:generation_of_relatively_uniformly_continuous_semigroups_on_vector_lattices}
M.~Kaplin and M.~Kramar~Fijav\v{z}.
\newblock Generation of relatively uniformly continuous semigroups on vector
  lattices.
\newblock {\em Anal. Math.}, 46(2):293--322, 2020.

\bibitem{luxemburg_zaanen_RIESZ_SPACES_VOLUME_I:1971}
W.A.J. Luxemburg and A.C. Zaanen.
\newblock {\em Riesz spaces. {V}ol. {I}}.
\newblock North-Holland Publishing Co., Amsterdam-London, 1971.

\bibitem{meyer-nieberg_BANACH_LATTICES:1991}
P.~Meyer-Nieberg.
\newblock {\em Banach lattices}.
\newblock Universitext. Springer-Verlag, Berlin, 1991.

\bibitem{rudin_REAL_AND_COMPLEX_ANALYSIS_THIRD_EDITION:1987}
W.~Rudin.
\newblock {\em Real and complex analysis}.
\newblock McGraw-Hill Book Co., New York, third edition, 1987.

\bibitem{van_rooij_van_zuijlen:2016}
A.C.M. van Rooij and W.B. van Zuijlen.
\newblock Integrals for functions with values in a partially ordered vector
  space.
\newblock {\em Positivity}, 20(4):877--916, 2016.

\bibitem{van_rooij_van_zuijlen:2017}
A.C.M. van Rooij and W.B. van Zuijlen.
\newblock Bochner integrals in ordered vector spaces.
\newblock {\em Positivity}, 21(3):1089--1113, 2017.

\bibitem{wright:1969}
J.D.M. Wright.
\newblock Stone-algebra-valued measures and integrals.
\newblock {\em Proc. London Math. Soc. (3)}, 19:107--122, 1969.

\bibitem{wright:1971a}
J.D.M. Wright.
\newblock Vector lattice measures on locally compact spaces.
\newblock {\em Math. Z.}, 120:193--203, 1971.

\bibitem{wright:1972}
J.D.M. Wright.
\newblock Measures with values in a partially ordered vector space.
\newblock {\em Proc. London Math. Soc. (3)}, 25:675--688, 1972.

\bibitem{zaanen_RIESZ_SPACES_VOLUME_II:1983}
A.C. Zaanen.
\newblock {\em Riesz spaces. {II}}, volume~30 of {\em North-Holland
  Mathematical Library}.
\newblock North-Holland Publishing Co., Amsterdam, 1983.

\bibitem{zaanen_INTRODUCTION_TO_OPERATOR_THEORY_IN_RIESZ_SPACES:1997}
A.C. Zaanen.
\newblock {\em Introduction to operator theory in {R}iesz spaces}.
\newblock Springer-Verlag, Berlin, 1997.

\end{thebibliography}

\end{document}